

\documentclass[a4paper,reqno,10pt]{amsart}



 \usepackage{pstricks}
 \usepackage{epsfig}

\usepackage{amsmath}
\usepackage{amsthm}
\usepackage{amssymb}
\usepackage{amscd} 

\usepackage{bbm} 
\usepackage{mathrsfs} 

\usepackage[applemac]{inputenc} 





\usepackage{verbatim} 

\usepackage{color}
\definecolor{purple}{rgb}{0.9,0.01,0.3}

\swapnumbers 

\newtheoremstyle{exercise} 
  {3pt} 
  {3pt} 
  {\scriptsize\rmfamily} 
  {
\parindent} 
  {\rmfamily\scshape} 
  {.} 
  {.5em} 
  {} 

\newtheoremstyle{newplain}
  {5pt}
  {5pt}
  {\itshape}
  {}
  {\rmfamily\scshape}
  {. ---}
  {.5em}
  {}

\newtheoremstyle{newremark}
  {5pt}
  {5pt}
  {\rmfamily}
  {}
  {\rmfamily\scshape}
  {. ---}
  {.5em}
  {}



\theoremstyle{newplain}
\newtheorem*{Theorem*}{Theorem} 

\theoremstyle{newplain}
\newtheorem{Theorem}{Theorem}
\newtheorem{Lemma}[Theorem]{Lemma}
\newtheorem{Corollary}[Theorem]{Corollary}
\newtheorem{Proposition}[Theorem]{Proposition}
\newtheorem{Conjecture}[Theorem]{Conjecture}
\newtheorem{Definition}[Theorem]{Definition}

\theoremstyle{newremark}
\newtheorem{Empty}[Theorem]{}
\newtheorem{Remark}[Theorem]{Remark}

\newtheorem{Claim}[Theorem]{Claim}
\newtheorem{Scholium}[Theorem]{Scholium}

\theoremstyle{exercise}

\numberwithin{Theorem}{section}
\numberwithin{Exercise}{section}

\newcommand{\N}{\mathbb{N}} 
\newcommand{\R}{\mathbb{R}} 

\newcommand{\Z}{\mathbb{Z}} 

\newcommand{\calC}{\mathscr{C}}

\newcommand{\calE}{\mathscr{E}}
\newcommand{\calF}{\mathscr{F}}

\newcommand{\calH}{\mathscr{H}}

\newcommand{\calK}{\mathscr{K}}
\newcommand{\calL}{\mathscr{L}}
\newcommand{\calM}{\mathscr{M}}

\newcommand{\calP}{\mathscr{P}}

\newcommand{\calR}{\mathscr{R}}





\newcommand{\balpha}{\boldsymbol{\alpha}}

\newcommand{\boldeta}{\boldsymbol{\eta}}


\newcommand{\bC}{\mathbf{C}}

\newcommand{\bG}{\mathbf{G}}

\newcommand{\bL}{\mathbf{L}}





\DeclareMathOperator{\rmap}{\mathrm{ap}} 
\DeclareMathOperator{\rmbdry}{\mathrm{bdry}} 
\DeclareMathOperator{\rmcard}{\mathrm{card}} 
\DeclareMathOperator{\rmclos}{\mathrm{clos}}

\DeclareMathOperator{\rmdiam}{\mathrm{diam}} 

\DeclareMathOperator{\rmdim}{\mathrm{dim}} 
\DeclareMathOperator{\rmdist}{\mathrm{dist}} 

\DeclareMathOperator{\rmim}{\mathrm{im}} 
\DeclareMathOperator{\rmint}{\mathrm{int}} 

\DeclareMathOperator{\rmLip}{\mathrm{Lip}} 

\DeclareMathOperator{\rmosc}{\mathrm{osc}} 
\DeclareMathOperator{\rmreg}{\mathrm{reg}}
\DeclareMathOperator{\rmsing}{\mathrm{sing}} 
\DeclareMathOperator{\rmspan}{\mathrm{span}} 
\DeclareMathOperator{\rmsupp}{\mathrm{supp}} 
\DeclareMathOperator{\rmTan}{\mathrm{Tan}} 


\newcommand{\blseg}{\pmb{[}}
\newcommand{\brseg}{\pmb{]}}



\newcommand{\hel} {
\hskip2.5pt{\vrule height7pt width.5pt depth0pt}
\hskip-.2pt\vbox{\hrule height.5pt width7pt depth0pt}
\, }

\newcommand{\restr}{\hel}



\newcommand{\cone} {
\times \negthickspace \negthickspace \negthickspace \times \medspace }


\newcommand{\lseg}{\boldsymbol{[}\!\boldsymbol{[}}
\newcommand{\rseg}{\boldsymbol{]}\!\boldsymbol{]}}


\newcommand{\cqfd} {
\renewcommand{\qedsymbol}{$\blacksquare$}
\qed
\renewcommand{\qedsymbol}{$\square$} }



\def\XXint#1#2#3{{%
\setbox0=\hbox{$#1{#2#3}{\int}$}
\vcenter{\hbox{$#2#3$}}\kern-.5\wd0}}




\newcommand{\veps}{\varepsilon}
\newcommand{\bveps}{\boldsymbol{\veps}}
\newcommand{\vphi}{\varphi}

\newcommand{\la}{\langle}
\newcommand{\ra}{\rangle}


\renewcommand{\leq}{\leqslant}
\renewcommand{\geq}{\geqslant}
\renewcommand{\subset}{\subseteq}
\renewcommand{\supset}{\supseteq}


\begin{document}


\title[Weighted length]{On sets minimizing their weighted length in uniformly convex separable Banach spaces}
\author{Thierry De Pauw}
\email{depauw@math.jussieu.fr} 
\author{Antoine Lemenant}
\email{lemenant@ljll.univ-paris-diderot.fr} 
\author{Vincent Millot}
\email{millot@math.jussieu.fr} 

\thanks{This work has been partially supported by the Agence Nationale de la Recherche, through the project ANR-12-BS01-0014-01 GEOMETRYA and ANR 10-JCJC 0106 AMAM.}


\begin{abstract}
We study existence and partial regularity relative to the weighted Steiner problem in Banach spaces. We show $C^1$ regularity almost everywhere for almost minimizing sets in uniformly rotund Banach spaces whose modulus of uniform convexity verifies a Dini growth condition.
\end{abstract}

\maketitle

\tableofcontents



\section{Introduction}

This paper contributes to the study of one dimensional geometric variational problems in an ambient Banach space $X$. We address both existence and partial regularity issues. The paradigmatic {\em weighted Steiner problem} is
\begin{equation*}
(\calP) \begin{cases}
\text{minimize } \int_C w \,d\calH^1 \\
\text{among compact connected sets $C \subset X$ containing $F$} \,.
\end{cases}
\end{equation*}
Here $\calH^1$ denotes the one dimensional Hausdorff measure (relative to the metric of~$X$), $w : X \to (0,+\infty]$ is a weight, and $F$ is a finite set implementing the boundary condition.
\par 
Assuming that problem $(\calP)$ admits finite energy competing sets, we prove existence of a minimizer 
in case $X$ is the dual of a separable Banach space, and $w$ is weakly* lower semicontinuous and bounded away from zero, 
Theorem \ref{ExistenceTH}. Ideas on how to circumvent the lack of compactness that ensues from $X$ being possibly infinite dimensional go back to M. Gromov, \cite{GRO.83}, and have been implemented by L.~Ambrosio and B. Kirchheim \cite{AMB.KIR.00} in the context of metric currents, as well as by L.~Ambrosio and P. Tilli \cite{AMBROSIO.TILLI} in the context of the Steiner problem (with $w \equiv 1$). The novelty here is to allow for a varying weight $w$; the relevant lower semicontinuity of the weighted length is in Theorem \ref{compactness}.
\par 
In studying the regularity of a minimizer $C$ of problem $(\calP)$, we regard $C$ as a member of the larger class of {\em almost minimizing} sets. Our definition is less restrictive than that of F.J. Almgren \cite{ALM.76} who first introduced the concept. A gauge is a nondecreasing function $\xi : \R^+ \setminus \{0\} \to \R^+$ such that $\xi(0+)=0$. We say a compact connected set $C \subset X$ of finite length is $(\xi,r_0)$ almost minimizing in an open set $U \subset X$ whenever the following holds: For every $x \in C \cap U$, every $0 < r \leq r_0$ such that $B(x,r) \subset U$, and every compact connected $C' \subset X$ with
\begin{equation*}
C \setminus B(x,r) = C' \setminus B(x,r)
\end{equation*}
one has
\begin{equation*}
\calH^1(C \cap B(x,r)) \leq (1+\xi(r)) \calH^1(C' \cap B(x,r)) \,.
\end{equation*}
One easily checks that if $C$ is a minimizer of $(\calP)$ then it is $(\xi,\infty)$ almost minimizing in $U = X \setminus F$, where $\xi$ is (related to) the oscillation of the weight $w$, Theorem~\ref{lenghtminimiz}. For instance if $w$ is H\"older continuous of exponent $\alpha$ then $\xi(r)$ behaves asymptotically like $r^\alpha$ near $r=0$.
\par 
In order to appreciate the hypotheses of our regularity results, we now make elementary observations. In case $\rmcard F = 2$ and $w$ is bounded from above and from below by positive constants, each minimizer $C$ of $(\calP)$ is a minimizing geodesic curve $\Gamma$ with respect to the conformal metric induced by $w$, with endpoints those of $F$, Theorem \ref{toporeg}. Since $\calH^1(\Gamma) < \infty$ we infer that $\Gamma$ is a Lipschitz curve. In general not much more regularity seems to ensue from the minimizing property of $\Gamma$. Indeed in the plane $X = \ell_\infty^2$ with $w\equiv1$, every 1-Lipschitz graph over one of the coordinate axes is length minimizing, as the reader will happily check. However if $X$ is a rotund\footnote{or strictly convex} Banach space, then $\Gamma$ must be a straight line segment.  Finally, in case $w$ is merely H\"older continuous the Euler-Lagrange equation for geodesics cannot be written in the classical or even weak sense, and our regularity results do not seem to entail from ODE or PDE arguments, even when the ambient space $X = \ell_2^2$ is the Euclidean plane.
\par 
In Section \ref{sec.4} we report on some properties of $(\xi,r_0)$ almost minimizing sets $C$ in general Banach spaces $X$. It is convenient -- but not always necessary -- to assume that the gauge $\xi$ verifies a Dini growth condition, specifically that
\begin{equation*}
\zeta(r) = \int_0^r \frac{\xi(\rho)}{\rho} \,d \calL^1(\rho) < \infty \,,
\end{equation*}
for each $r > 0$. We show that for each $x \in C \cap U$ the weighted density ratio
\begin{equation*}
\exp[\zeta(r)] \frac{\calH^1(C \cap B(x,r))}{2r}
\end{equation*}
is a nondecreasing function of $0 < r \leq \min \{r_0,\rmdist(x,X \setminus U)\}$, Theorem \ref{monotonicity}. Its limit as $r \downarrow 0$, denoted $\Theta^1(\calH^1 \hel C , x)$, verifies the following dichotomy:
\begin{equation*}
\text{{\bf Either }} \Theta^1(\calH^1 \hel C , x ) = 1 \quad
\text{{\bf Or }}  \Theta^1(\calH^1 \hel C , x) \geq 3/2 \,,
\end{equation*}
Corollary \ref{density.dichotomy}. We then establish that $\rmreg(C) := C \cap U \cap \{ x : \Theta^1(\calH^1 \hel C , x) = 1 \}$ is relatively open in $C \cap U$ and that for each $x \in \rmreg(C)$ and every $\delta > 0$ there exists $0 < r < \delta$ such that $C \cap B(x,r)$ is a Lipschitz curve that intersects $\rmbdry B(x,r)$ exactly in its two endpoints, Theorems \ref{lip.reg} and \ref{lip.reg.bis}.
\par 
In Section \ref{sec.5} we improve on the regularity of $\rmreg(C)$. Assume the ambient Banach space $X$ is uniformly rotund\footnote{or uniformly convex} and let $\delta_X(\veps)$ denote its modulus of uniform rotundity, see \ref{UR}. Let $x \in \rmreg(C)$. We assume $r_j \downarrow 0$ and each $C \cap B(x,r_j)$ is a Lipschitz curve $\Gamma_j$ with endpoints $x_j^-$ and $x_j^+$ on $\rmbdry B(x,r_j)$. We let $L_j$ be the affine line containing $x_j^-$ and $x_j^+$. We want to show that $\Gamma_j$ does not wander too far away from $L_j$, i.e. we seek for an upper bound of $\max_{z \in \Gamma_j} \rmdist(z,L_j)$. Suppose this maximum equals $h_jr_j$ and is achieved at $z \in \Gamma_j$. The triangle inequality implies $\calH^1(\Gamma_j) \geq \|z-x_j^-\| + \|x_j^+-z\|$. As $X$ is uniformly rotund, the latter is quantitatively larger than the length of the straight line segment joining $x_j^-$ and $x_j^+$, specifically
\begin{equation*}
\|z-x_j^-\| + \|x_j^+-z\| \geq \|x_j^+-x_j^-\| \left( 1 + \delta_X(Ch_j) \right) \,,
\end{equation*}
Theorem \ref{excess.length}. On the other hand, the almost minimizing property of $C$ says that
\begin{equation*}
\calH^1(\Gamma_j) \leq (1 + \xi(r_j)) \|x_j^+-x_j^-\| \,.
\end{equation*}
It now becomes clear that $h_j$ cannot be too large, in fact
\begin{equation*}
h_j \leq C (\delta_X^{-1} \circ \xi)(r_j) \,,
\end{equation*}
which in turns yields the Hausdorff distance estimate
\begin{equation*}
\rmdist_\calH(\Gamma_j , L_j \cap B(x,r_j)) \leq C' (\delta_X^{-1} \circ \xi)(r_j) \,.
\end{equation*}
Upon noticing that the {\em good} radii $r_j$ can be chosen in near geometric progression, we infer that the sequence of affine secant lines $\{L_j\}$ is Cauchy provided
\begin{equation}
\label{eq.intro.1}
\sum_{j=1}^\infty (\delta_X^{-1} \circ \xi)(2^{-j}) < \infty \,.
\end{equation}
The fact that the relevant inequalities are also locally uniform in $x$ then yields our main $C^1$ regularity Theorem \ref{mainregth} under the assumption that $\delta_X$ and $\xi$ verify the Dini growth condition \eqref{eq.intro.1}.  
\par 
In case $\xi(r) \cong r^\alpha$, a change of variable shows that \eqref{eq.intro.1} in fact involves solely $\delta_X$, namely it is equivalent to asking that
\begin{equation}
\label{eq.intro.2}
\sum_{j=1}^\infty \delta_X^{-1}(2^{-j}) < \infty \,.
\end{equation}
The condition is met for instance by all $\bL_p$ spaces, $1 < p < \infty$, as shown by the Clarkson inequalities. The case when $X=\ell_2^n$ is a finite dimensional Euclidean space has been worked out for instance in \cite{MOR.94} (see also \cite[Section 12]{d3} and \cite{MEI.09}).

 In Section \ref{sec.6} we apply our existence and regularity results to quasihyperbolic geodesics for instance in $\bL_p$ spaces. It is perhaps worth noticing that even in the finite dimensional setting $X = \ell_p^n$, $2 < p < \infty$, the problem is not ``elliptic'', or rather the metric is not Finslerian, as the smooth unit sphere $S_{\ell_p^n}$ has vanishing curvature at $\pm e_1,\ldots, \pm e_n$. In fact, in case $X$ is finite dimensional and the unit sphere $S_X$ is $C^\infty$ smooth, \eqref{eq.intro.2} may be understood as a condition on the order of vanishing of
\begin{equation*}
f_v : T_v S_X \to \R : h \mapsto \|v+h\|-1 \,,
\end{equation*}
$v \in S_X$. With this in mind, we show in Section \ref{sec.8} how to completely dispense with \eqref{eq.intro.2} in case $\rmdim X = 2$, and the norm of $X$ is rotund and $C^2$. The relevant regularity result Theorem \ref{87} states that $\rmreg(C)$ is made of {\em differentiable} curves (not necessarily $C^1$) provided $C$ is $(\xi,r_0)$ almost minimizing and $\sqrt{\xi}$ is Dini. In order to prove this we localize the modulus of continuity $\delta_X(v;\veps)$ relative to each direction $v \in S_X$. We then consider the subset $G = S_X \cap \{ v : \partial^2_{h,h}f_v(0) > 0 \}$. We observe it is relatively open in $S_X$, and its complement $S_X \setminus G$ is nowhere dense because the norm is rotund, i.e. the unit circle $S_X$ contains no line segment. Furthermore, if $v \in G$ then $\delta_X(v;\veps) \geq c(v)\veps^2$, the best case scenario for regularity. To prove the differentiability of $\rmreg(C)$ at $x \in \rmreg(C)$ we need only to establish that the set of tangent lines $\rmTan(C,x)$ is a singleton. This set is connected, according to D. Preiss, \cite{PRE.87}. Thus either $L \in \rmTan(C,x) \cap G \neq \emptyset$ and we can run the regularity proof of Section \ref{sec.5} ``in a cone about $L$'', or $\rmTan(C,x) \subset S_X \setminus G$ and therefore $\rmTan(C,x)$ is a singleton.

\section{Preliminaries}

\begin{Empty}[Metric spaces]
In a metric space $(E,d)$ we define the open and closed $r$-neighborhoods of a subset $A \subset E$ by the relations
\begin{equation*}
\begin{split}
U(A,r) & = E \cap \{ y : \rmdist(y,A) < r \} \\
B(A,r) & = E \cap \{ y : \rmdist(y,A) \leq r \}
\end{split}
\end{equation*}
where $\rmdist(y,A) = \inf \{ d(y,x) : x \in A \}$. If $A=\{x\}$ is a singleton, these are the usual open and closed balls $U(x,r)$ and $B(x,r)$. The interior, closure and boundary of a subset $S \subset E$ are respectively denoted by $\rmint S$, $\rmclos S$ and $\rmbdry S$.
\end{Empty}

\begin{Empty}[The ambient Banach space $X$]
Throughout this paper $X$ denotes a Banach space with $\rmdim X \geq 2$. We do not merely care about the isomorphic type of $X$, but also about the specific given norm. Changing the norm for an equivalent one affects the corresponding Hausdorff measure, and therefore also the solutions of the variational problems we are interested in, as well as their regularity theory. Various collections of further requirements about $X$ are made in distinct sections. Specifically:
\begin{enumerate}
\item[(3)] In Section \ref{sec.3}, $X$ is the dual of a separable space;
\item[(4)] In Section \ref{sec.4}, $X$ is an arbitrary separable Banach space;
\item[(5)] In section \ref{sec.5}, $X$ is uniformly rotund, and the main result \ref{mainregth} applies when $\delta_X^{-1}$ verifies a Dini growth condition, $\delta_X$ being the modulus of uniform rotundity of $X$;
\item[(6)] In section \ref{sec.6}, $X$ is as in section \ref{sec.5};
\item[(7)] In section \ref{sec.8}, $X$ is a finite dimensional (uniformly) rotund space with $C^2$ smooth norm, and the main result \ref{87} also assumes that $\rmdim X = 2$.
\end{enumerate}
\end{Empty}

\begin{Empty}[Hausdorff distance]
In a metric space $(E,d)$ we define the {\em Hausdorff distance} between two closed sets $A_1,A_2 \subset E$ as
\begin{equation*}
\rmdist_\calH(A_1,A_2) = \inf \{ r > 0 : A_1 \subset B(A_2,r) \text{ and } A_2 \subset B(A_1,r) \} \,.
\end{equation*}
If $(E,d)$ is compact then the {\em Blaschke selection principle} asserts that $(\calK(E),\rmdist_\calH)$ is a compact metric space, where $\calK(E)$ denotes the collection of nonempty compact subsets of $E$. It is easily seen that 
\begin{enumerate}
\item[(I)] If $\lim_n \rmdist_\calH(A_n,A)=0$ and $x \in A$ then there is a sequence $\{x_n\}$ in $E$ such that $x_n \in A_n$, $n=1,2,\ldots$, and $\lim_n d(x_n,x)=0$;
\item[(II)] If $\lim_n \rmdist_\calH(A_n,A)=0$, $x \in E$ and $\{x_n\}$ is a sequence in $E$ such that $x_n \in A_n$, $n=1,2,\ldots$, and $\lim_n d(x_n,x)=0$, then $x \in A$.
\end{enumerate}
\end{Empty}

\begin{Empty}[Hausdorff measure]
Given a metric space $(E,d)$ we will consider the {\em 1 dimensional Hausdorff outer measure} $\calH^1$ defined for subsets $A \subset E$ by the following formulas:
\begin{multline*}
\calH^1_{(\delta)}(A) = \inf \bigg\{ \sum_{i \in I} \rmdiam A_i : \{A_i\}_{i \in I} \text{ is a finite or countable family of}\\ \text{subsets of $E$ such that } A \subset \cup_{i \in I} A_i \text{ and } \rmdiam A_i \leq \delta \text{ for all } i \in I \bigg\}
\end{multline*}
corresponding to each $0 < \delta \leq \infty$, and
\begin{equation*}
\calH^1(A) = \sup_{\delta} \calH^1_{(\delta)}(A) \,.
\end{equation*}
All Borel subsets of $E$ are $\calH^1$ measurable in the sense of Caratheodory, and the definition of $\calH^1(A)$ remains unchanged if we restrict to {\em closed} (resp. {\em open}) covers $\{A_i\}_{i \in I}$ in the definition of $\calH^1_{(\delta)}(A)$. If we want to insist about the underlying metric space we will write $\calH^1_E$ instead of $\calH^1$. It is useful to note that if $F \subset E$ is considered as a metric space $(F, d \restriction F \times F)$ then $\calH^1_E(F) = \calH^1_F(F)$. Finally, if $E$ is a normed linear space and $a,b \in E$, we define the line segment with endpoints $a,b$ by $\blseg a,b \brseg = E \cap \{ a + t(b-a) : 0 \leq t \leq 1 \}$, and one checks that $\calH^1( \blseg a,b \brseg ) = \|b-a\|$.
\end{Empty}

\begin{Empty}[A covering theorem]
\label{25}
Given a metric space $(E,d)$ and $C \subset E$, we define the {\em enlargement} of $C$ as
\begin{equation*}
\hat{C} = B(C, 2 (\rmdiam C)) = E \cap \{ x : \rmdist(x,C) \leq 2(\rmdiam C) \} \,.
\end{equation*}
In particular $\widehat{B(x,r)} \subset B(x,5r)$, $x \in E$, $r > 0$.
\par 
A {\em Vitali cover} of $A \subset E$ is a collection $\calC$ of closed subsets of $E$ with the following property: {\em For every $x \in A$ and every $\delta > 0$ there exists $C \in \calC$ such that $x \in C$ and $\rmdiam C < \delta$}. It follows from \cite[2.8.6]{GMT} that $\calC$ admits a disjointed subcollection $\calC^*$ with the following property: {\em For every finite $\calF \subset \calC^*$ one has
\begin{equation*}
A \setminus \cup \calF \subset \cup \{ \hat{C} : C \in \calC^* \setminus \calF \} \,.
\end{equation*}
}
\end{Empty}

\begin{Empty}[Comparing measures]
\label{26}
Given a metric space $(E,d)$ and a {\em finite Borel measure} $\mu$ on $E$, we define at each $x \in E$ the following generalized upper density:
\begin{equation*}
\widetilde{\Theta}^1(\mu,x) = \lim_{\delta \to 0^+} \sup \left\{ \frac{\mu(C)}{\rmdiam C} : x \in C \subset E, \;C \text{ is closed, and } 0 < \rmdiam C < \delta \right\}\,.
\end{equation*}
\par 
{\em
If $A \subset E$ is Borel, $0 < t < \infty$, and $\widetilde{\Theta}^1(\mu,x) \geq t$ for every $x \in A$, then $\mu(A) \geq t \calH^1(A)$.
}
\par 
In order to prove this we fix $\delta > 0$, $\veps > 0$, and we choose an open set $U \subset E$ containing $A$ such that $\mu(U) \leq \veps + \mu(A)$. Our assumption guarantees that with each $x \in A$ and $i\in\{1,2,\ldots\}$ large enough we can associate a closed set $C_{x,i} \subset U$ such that $x \in C_{x,i}$, $0 < \rmdiam C_{x,i} < i^{-1} \delta$, and $\mu(C_{x,i}) \geq t (1-\veps) (\rmdiam C_{x,i})$. We extract a disjointed subfamily $\{C_j\}_{j \in J}$ of $\{C_{x,i} : x \in A, i\in\{1,2,\ldots\}, C_{x,i} \subset U\}$ according to~\ref{25}. Since for each finite $F \subset J$ one has
\begin{equation*}
\sum_{j \in F} \rmdiam \widehat{C_j} \leq 5 \sum_{j\in F} \rmdiam C_j \leq 5 (1-\veps)^{-1}t^{-1} \mu(E) < \infty \,,
\end{equation*}
and since $\rmdiam C_j > 0$ for every $j \in J$, we infer that $J$ is at most countable. Thus we as well assume $J = \N$ and
we may select $k$ large enough for
\begin{equation*}
\sum_{j=k+1}^\infty \rmdiam \widehat{C_j} \leq \veps \,.
\end{equation*}
Thus,
\begin{multline*}
\calH^1_{(\delta)}(A) \leq \sum_{j=1}^k \rmdiam C_j + \sum_{j=k+1}^\infty \rmdiam \widehat{C_j} \leq (1-\veps)^{-1}t^{-1} \sum_{j=1}^k \mu(C_j) + \veps \\
\leq (1-\veps)^{-1}t^{-1}(\veps + \mu(A)) + \veps \,.
\end{multline*}
Letting $\veps \to 0$ and $\delta \to 0$ completes the proof.
\end{Empty}

\begin{Empty}[The multiplicity function and Eilenberg's inequality]
\label{eilenberg}
Here we consider a {\em complete separable} metric space $(E,d)$ and a Borel function $f : E \to \R$.  We recall that $f(A)$ is $\calL^1$ measurable whenever $A \subset E$ is Borel, see e.g. \cite[Lemma 8.6.1 and Corollary 8.4.3]{COHN}. It thus follows as in \cite[2.10.10]{GMT} that the {\em multiplicity function}
\begin{equation*}
\R \to \N \cup \{ \infty \} : r \mapsto \rmcard( A \cap f^{-1}\{r\} )
\end{equation*}
is $\calL^1$ measurable. 
\par 
Assuming furthermore that $f$ be Lipschitz, the {\em Eilenberg's inequality} \cite[2.10.25]{GMT} states that
\begin{equation*}
\int_\R \rmcard( A \cap f^{-1} \{r \} )\, d\calL^1(r) \leq (\rmLip f) \calH^1(A) \,.
\end{equation*}
We will often apply these two results to the case when $f(x) = d(x,x_0)$, $x_0 \in X$.
\end{Empty}

\begin{Empty}[Curves]
\label{prelim.curves}
A {\em curve} in a metric space $(E,d)$ is a topological line segment, i.e. a set $\Gamma \subset E$ of the type $\Gamma = \gamma([a,b])$ where $a < b$ are real numbers and $\gamma : [a,b] \to E$ is an injective continuous map. We call $\gamma(a)$ and $\gamma(b)$ the {\em endpoints} of $\Gamma$, and we write $\mathring{\Gamma}:=\Gamma\setminus\{\gamma(a),\gamma(b)\}$. If $x \in \Gamma$ is not an endpoint then $\Gamma \setminus \{x\}$ has two components. If $\calH^1(\Gamma) < \infty$ then there exists an injective $\gamma' : [0,\calH^1(\Gamma)] \to E$ such that $\rmLip \gamma' \leq 1$ and $\rmim \gamma' = \Gamma$ as well. If $S \subset E$ is compact connected, and $\calH^1(S) < \infty$, then for each distinct $x,x' \in S$ there exists a curve contained in $S$ whose endpoints are $x$ and $x'$ (see e.g. \cite[4.4.7]{AMBROSIO.TILLI}).
\end{Empty}

\begin{Empty}[Gauges and Dini Gauges]
\label{gauges}
Given an interval $I = \R \cap \{ r : 0 < r \leq b \}$, $0 < b < \infty$, a {\em gauge on $I$} is a nondecreasing function
\begin{equation*}
\xi : I \to \R+
\end{equation*}
such that $\lim_{r \to 0+} \xi(r) = 0$. We often omit to specify the interval $I$ when it is clearly determined by the context. We say that a gauge $\xi$ on $I$ {\em is a Dini gauge} provided
\begin{equation*}
\zeta(r) := \int_0^r \frac{\xi(\rho)}{\rho} \,d\calL^1(\rho) < \infty \,,
\end{equation*}
$r \in I$, and we call $\zeta$ the {\em mean slope} of $\xi$. Notice $\zeta$ is a gauge as well. 
\par 
The following are useful examples of gauges. If $\xi(r) \leq  a r^\alpha$, $a > 0$, $0 < \alpha \leq 1$, we call $\xi$ a {\em geometric gauge} and we easily check that it is Dini with $\zeta(r) = \alpha^{-1} \xi(r)$. As another class of examples we consider the gauges $\xi(r) = a | \log r |^{-1-\alpha}$, $0 < r < 1$, corresponding to $\alpha > 0$ and $a > 0$. We call these {\em log-geometric gauges} and we check they are Dini as well, with $\zeta(r) = \alpha^{-1} a | \log r |^{-\alpha}$. The gauge $\xi(r) = | \log r|^{-1}$, $0 < r < 1$, however, is {\em not} Dini.
\par 
Let $\beta > 1$. Define $I_j = [\beta^{-(j+1)},\beta^{-j}]$, $j \in \N$. For any gauge $\xi$ in $I$ and any $I_j \subset I$ one has
\begin{multline*}
\left( \frac{\beta-1}{\beta} \right) \xi(\beta^{-(j+1)}) \leq \calL^1(I_j) \left( \inf_{\rho \in I_j} \frac{\xi(\rho)}{\rho} \right) \\
\leq \int_{I_j} \frac{\xi(\rho)}{\rho}\,d\calL^1(\rho) \\
\leq \calL^1(I_j) \left( \sup_{\rho \in I_j} \frac{\xi(\rho)}{\rho} \right) \leq (\beta-1) \xi(\beta^{-j}) \,.
\end{multline*}
Thus the appropriate comparison tests imply that {\em $\xi$ is Dini if and only if}
\begin{equation*}
\sum_{\substack{j=0 \\ I_j \subset I}}^\infty \xi(\beta^{-j}) < \infty \,.
\end{equation*}
Furthermore,
\begin{equation*}
\sum_{j=k}^\infty \xi(\beta^{-j}) \leq \left( \frac{\beta}{\beta-1} \right) \zeta(\beta^{-(k-1)}) \,,
\end{equation*}
whenever $k$ is sufficiently large for $I_{k-1} \subset I$. Given $r$ such that $\beta^2r \in I$ and choosing $j$ such that $r \in I_j$, this also implies that
\begin{equation*}
\xi(r) \leq \xi(\beta^{-j}) \leq \left( \frac{\beta}{\beta-1} \right) \zeta(\beta^{-(j-1)}) \leq \left( \frac{\beta}{\beta-1} \right) \zeta(\beta^2 r) \,.
\end{equation*}
\end{Empty}

\begin{Empty}[Uniformly rotund spaces]
\label{UR}
We recall that a Banach space $X$ is called {\em uniformly rotund}\footnote{or {\em uniformly convex}} (abbreviated {\em UR}) whenever the following holds. For every $\veps>0$ there exists $\delta >0$ such that for every $x, y\in B_X$,
\begin{equation}
\label{eqq.1}
\|x-y\|\geq \varepsilon \Rightarrow \left \| \frac{x+y}{2}\right\|\leq 1-\delta\,.
\end{equation}
Notice that, corresponding to a fixed $\veps > 0$, the set of those $0 < \delta < 1$ for which \eqref{eqq.1} holds is a closed interval. Thus there exists a nondecreasing choice $\veps \mapsto \delta(\veps)$ for which \eqref{eqq.1} is valid. In fact, given an arbitrary Banach space $X$ and $0 < \veps \leq 2$, we put
\begin{equation*}
\delta_X(\veps) = \inf \left\{ 1 - \left \| \frac{x+y}{2} \right\| : x,y \in X , \max\{\|x\|,\|y\|\} \leq 1 \, \text{ and } \|x-y\|\geq\veps \right\} \,.
\end{equation*}
It is most obvious that $\delta_X$ is a gauge. One notices that $X$ is uniformly rotund if and only if $\delta_X(\veps) > 0$ for every $0 < \veps \leq 2$. In this case $\delta_X$ is called the {\em modulus of uniform rotundity} of $X$.
\par 
We also define
\begin{equation*}
\delta_X^{-1}(t) = \sup \{ \veps > 0 : \delta_X(\veps) \leq t \}
\end{equation*}
and we readily infer that $\delta_X(\veps) \leq t$ implies $\veps \leq \delta_X^{-1}(t)$ for all $\veps > 0$ and all $t > 0$. The gauge $\delta_X^{-1}$, particularly its growth, pertains to the regularity theory of Section~\ref{sec.5}.

\begin{Remark}\label{stupidremark} 
In the definition of $\delta_X(\varepsilon)$ one may require that $\|x\|=\|y\|=1$ instead of $\max\{\|x\|,\|y\|\}\leq 1$. This leads to an equivalent definition of rotundity. 
 \footnote{{\it Definition 1.e.1 (Vol. II Chap. 1 Paragraph e) in \cite{LINDENSTRAUSS.TZAFRIRI.II}}.}
\end{Remark}

\end{Empty}

\section{Existence}
\label{sec.3}

\begin{Empty}[Local hypotheses about the ambient Banach space]
In this section $X$
is the dual of a separable Banach space, with norm $\|\cdot\|$. Its closed unit ball $B_X$ equipped with the restriction of the weak* topology of $X$ is a compact separated topological space. It is metrizable as well, owing to the separability of a predual of $X$. We let $d^*$ denote any metric on $B_X$ compatible with its weak* topology, for instance
\begin{equation*}
d^*(x_1,x_2)=\sum_{n} 2^{-n}|\langle y_n,  x_1-x_2\rangle|\;,
\end{equation*}
where $y_1,y_2,\ldots$ is a dense sequence of the unit ball of some predual of $X$. Notice that $d^*(x_1,x_2) \leq \|x_1-x_2\|$.  In the compact metric space $(B_X,d^*)$ we denote the corresponding Hausdorff distance as $\rmdist_\calH^*$.
 We consider two metrizable topologies on $B_X$: that induced by the norm of $X$, and that induced by the weak* topology of $X$. When we refer to {\em closed} (resp. {\em compact}) subsets $C \subset B_X$ we always mean strongly closed (resp. compact), i.e. with respect to the norm topology of $X$, and we use the terminology {\em weakly* closed} (resp. {\em weakly* compact}) otherwise.
\end{Empty}


\begin{Lemma} 
\label{Ahlfors} 
Let $(E,d)$ be a metric space. 
\begin{enumerate}
\item[(A)] If $C \subset E$ is connected, $x \in C$ and $0 < r \leq \rmdiam C$, it follows that 
\begin{equation*}
\calH^1(C \cap B(x,r)) \geq r \,;
\end{equation*}
\item[(B)] If $\Gamma$ if a curve in $E$ with endpoints $a$ and $b$ then $\calH^1(\Gamma) \geq d(a,b)$.
\end{enumerate}
\end{Lemma}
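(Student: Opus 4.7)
The plan is to reduce both assertions to Eilenberg's inequality \ref{eilenberg} applied to the $1$-Lipschitz map ``distance from a point''.

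For (A), set $f : C \to \R$, $f(y) = d(x, y)$. Since $C$ is connected and $f$ continuous, $f(C)$ is a connected subset of $[0,+\infty)$ containing $0$, hence an interval with left endpoint $0$. Let $M := \sup_{y \in C} f(y) \in (0, +\infty]$ and distinguish two cases.

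\emph{Case 1: $M \geq r$.} Then $f(C) \supset [0, r)$, so for each $s \in (0,r)$ the fibre $C \cap f^{-1}\{s\}$ is nonempty and contained in $C \cap B(x, r)$. Eilenberg applied to the restriction of $f$ to $C \cap B(x, r)$ gives
\begin{equation*}
r \;=\; \int_0^r 1 \, d\calL^1(s) \;\leq\; \int_\R \rmcard\bigl( C \cap B(x, r) \cap f^{-1}\{s\} \bigr) \, d\calL^1(s) \;\leq\; \calH^1(C \cap B(x, r)).
\end{equation*}

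\emph{Case 2: $M < r$.} Then $C \subset B(x, M) \subset B(x, r)$, so $C \cap B(x, r) = C$ and the claim reduces to $\calH^1(C) \geq r$. For $\eta > 0$, pick $p, q \in C$ with $d(p, q) > \rmdiam C - \eta \geq r - \eta$, and apply Eilenberg to $g(y) = d(p, y)$ on $C$. Connectedness forces $g(C) \supset [0, d(p, q)]$, hence $\calH^1(C) \geq d(p, q) > r - \eta$; letting $\eta \to 0^+$ concludes.

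Part (B) is immediate: apply Eilenberg to $y \mapsto d(a, y)$ on $\Gamma$, whose image is connected and contains $0$ and $d(a, b)$, hence the full interval $[0, d(a, b)]$, yielding $\calH^1(\Gamma) \geq d(a, b)$. The only substantive point is the case split in (A): a naive application of Eilenberg to $f$ on $C \cap B(x,r)$ produces the bound $\min\{M, r\}$, which may be strictly less than $r$ (for instance when $C$ is a diameter of $B(x, r)$ through $x$, where $M = r/2$ yet $\rmdiam C = r$), so the diameter hypothesis has to be exploited separately as in Case 2 via the ``$\calH^1 \geq \rmdiam$ for connected sets'' principle.
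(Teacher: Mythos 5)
Your proof is correct, and it in fact repairs a gap in the paper's own argument for (A). The paper asserts that ``$r \in \rmclos u(C)$ whenever $0 < r \leq \rmdiam C$'', where $u(y)=d(y,x)$; this does not follow. Indeed $\rmclos u(C)=[0,M]$ with $M:=\sup_{y\in C}d(y,x)$, and the triangle inequality only gives $\rmdiam C\leq 2M$, so $r\leq\rmdiam C$ only forces $r\leq 2M$. Your example of a diameter through the center, where $M=r/2<r=\rmdiam C$, witnesses exactly this. The Lipschitz-image estimate in the paper thus only yields $\calH^1(C\cap B(x,r))\geq\min\{M,r\}$ as you observe, and your Case~2 --- when $C\subset B(x,r)$, re-center at a nearly diametrical point $p\in C$ and rerun the argument with $g(\cdot)=d(p,\cdot)$ --- is precisely the missing step.

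Otherwise the mechanism is the same. The paper uses directly that a $1$-Lipschitz map does not increase $\calH^1$, giving $\calH^1(C\cap B(x,r))\geq\calH^1\big(u(C\cap B(x,r))\big)\geq\calH^1([0,r))=r$; you invoke Eilenberg's inequality \ref{eilenberg}, which for a $1$-Lipschitz map into $\R$ specializes to the same bound via nonemptiness of the fibres. One small caveat: as stated in \ref{eilenberg}, Eilenberg is formulated for $(E,d)$ complete and separable, whereas the lemma is claimed for an arbitrary metric space; the bare $1$-Lipschitz image estimate, as in the paper's proof, is the cleaner thing to cite here, though this does not affect the substance of your argument. For (B) you apply the mechanism directly rather than deducing it from (A); both work, and in the paper's reduction the choice $x=a$, $r=d(a,b)$ forces $r=u(b)\in u(\Gamma)$, so the gap in (A) never opens in that application.
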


\begin{proof} 
(A) There is no restriction to assume $C$ is nonempty.
Given $x \in C$ we consider the Lipschitz function $u : E \to \R : y \mapsto d(y,x)$. Since $C$ is connected so is $u(C)$, and $r \in \rmclos u(C)$ whenever $0 < r \leq  \rmdiam C$. As $\rmLip u \leq 1$ we infer that
\begin{equation*}
r = \calH^1_{\R}([0,r [ ) = \calH^1_{\R}(u(B(x,r) \cap C)) \leq \calH^1_E(B(x,r) \cap C)  \,.
\end{equation*}
(B) follows from (A) on letting $C = \Gamma$, $x=a$, and $r=d(a,b)$.
\end{proof}


\begin{Lemma}
\label{subsequence}
Every sequence $\{C_n\}$ of nonempty compact subsets of $B_X$ contains a subsequence $\{C_{k(n)}\}$ such that $\rmdist_{\calH}^*(C_{k(n)},C) \to 0$ as $n \to \infty$ for some nonempty closed set $C \subset B_X$.
\end{Lemma}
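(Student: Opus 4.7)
The plan is essentially an application of the Blaschke selection principle recalled in~\ref{25} (really in the paragraph on Hausdorff distance) to the compact metric space $(B_X,d^*)$. The only subtlety is a topological consistency check: the $C_n$ are assumed \emph{compact} in the paper's convention, i.e.\ norm-compact, while Blaschke is to be invoked with respect to $d^*$; and the limit must be shown to be norm-closed, as the statement requires.

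First I would record that $(B_X,d^*)$ is a compact metric space, a fact already recalled in Section~\ref{sec.3}: indeed, $B_X$ equipped with the weak$^*$ topology is compact by Banach--Alaoglu and metrizable by the separability of a predual, and $d^*$ is chosen to metrize that topology. Next, because the weak$^*$ topology on $B_X$ is coarser than the norm topology, the identity map $(B_X,\|\cdot\|)\to (B_X,d^*)$ is continuous, and so every norm-compact subset of $B_X$ is also $d^*$-compact. In particular, each $C_n$ is a nonempty element of $\calK(B_X,d^*)$.

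With this observation in hand, the Blaschke selection principle applied to $(B_X,d^*)$ produces a subsequence $\{C_{k(n)}\}$ and a nonempty $d^*$-compact set $C\subset B_X$ such that $\rmdist_{\calH}^*(C_{k(n)},C)\to 0$ as $n\to\infty$. It remains to check that $C$ is closed in the sense of the paper, i.e.\ norm-closed. But $C$ is $d^*$-compact, hence $d^*$-closed in $B_X$, i.e.\ weakly$^*$ closed. Since the weak$^*$ topology is coarser than the norm topology, every weakly$^*$ closed subset of $B_X$ is norm-closed, completing the proof.

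I do not anticipate any real obstacle here: the entire content is the topological comparison between the two natural topologies on $B_X$, which goes in the favourable direction in both places it is used (for inserting the $C_n$ into $\calK(B_X,d^*)$, and for extracting norm-closedness of the limit from $d^*$-compactness).
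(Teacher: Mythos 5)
Your proposal is correct and takes essentially the same approach as the paper: note that norm-compactness of the $C_n$ gives weak$^*$ compactness so that Blaschke applies in $(B_X,d^*)$, then observe that the resulting $d^*$-compact limit is norm-closed. The paper states this more tersely, but the argument is the same.
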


\begin{proof}
Upon noticing that each $C_n$ is weakly* compact, this becomes a consequence of the Blaschke selection principle applied to the compact metric space $(B_X,d^*)$, and the fact that a weakly* compact set $C$ is closed.
\end{proof}

\begin{Theorem}[Compactness and lower semicontinuity]
\label{compactness}
Assume that
\begin{enumerate}
\item[(A)]
$\{C_n\}$  is a sequence of nonempty compact connected subsets of $B_X$;
\item[(B)]
$\rmdist_\calH^*(C_n,C)\to 0$ for some nonempty closed subset $C$ of  $B_X$;
\item[(C)]
$w : B_X \to (0,+\infty]$ 
is weakly* lower semicontinuous and 
\begin{equation*}
\sup_n \int_{C_n} w \,d\calH^1 < \infty \,.
\end{equation*}
\end{enumerate}
It follows that
\begin{enumerate}
\item[(D)] $C$ is compact and connected;
\item[(E)] $\int_C w \,d\calH^1\leq \liminf_n \int_{C_n} w \, d\calH^1$;
\item[(F)]
$F\subset C$ whenever $F\subset C_n$ for every $n=1,2,\ldots$. 
\end{enumerate}
\end{Theorem}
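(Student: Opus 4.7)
Part (F) is immediate from Property (II) of $d^*$-Hausdorff convergence: for each $x \in F$, the constant sequence $\{x\}$ lies in every $C_n$ and $d^*$-converges to $x$, so $x \in C$. The bulk of the work is in (D) and (E), for which my plan is to reduce the problem via a $1$-Lipschitz parameterization of each $C_n$ and Arzela--Ascoli in the compact metric space $(B_X, d^*)$, producing a single curve $\tilde\gamma : [0,1] \to B_X$ whose image is $C$.

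First I would establish a uniform upper bound on lengths: since $w$ is weakly* lower semicontinuous and strictly positive on the weak*-compact set $B_X$, it attains a positive minimum $c := \inf_{B_X} w > 0$, so hypothesis (C) yields $\sup_n \calH^1(C_n) \leq M/c$, where $M$ is the uniform energy bound. Each compact connected $C_n$ of finite $\calH^1$ admits a surjective $1$-Lipschitz parameterization $\gamma_n : [0, L_n] \to C_n$ with $L_n \leq 2\calH^1(C_n)$, obtained from a ``double Euler tour'' of the arcwise-connected continuum $C_n$ and having multiplicity $N(\gamma_n,\cdot) \equiv 2$ at $\calH^1$-a.e.\ point of $C_n$. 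Rescaling via $\tilde\gamma_n(s) := \gamma_n(s L_n)$ to the common domain $[0,1]$, the family becomes uniformly Lipschitz in norm, hence a fortiori in $d^* \leq \|\cdot\|$, with values in the $d^*$-compact $B_X$. Arzela--Ascoli then produces, after extraction, a $d^*$-uniform limit $\tilde\gamma : [0,1] \to B_X$ and $L_n \to L_\infty$; weak*-lower semicontinuity of the norm forces $\tilde\gamma$ to be $L_\infty$-Lipschitz in norm, hence norm-continuous.

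To conclude (D) I would verify $C = \tilde\gamma([0,1])$ using Properties (I) and (II): the inclusion $\tilde\gamma([0,1]) \subset C$ follows from Property (II) since $\tilde\gamma_n(s) \in C_n$ weak*-converges to $\tilde\gamma(s)$; conversely, any $x \in C$ is by Property (I) a weak*-limit of some $x_n = \tilde\gamma_n(s_n) \in C_n$, and extracting $s_n \to s$ together with the $d^*$-equicontinuity of the $\tilde\gamma_n$ forces $x = \tilde\gamma(s)$. Thus $C$ is the norm-continuous image of a compact connected interval, so norm-compact and norm-connected. For (E), I would combine the one-dimensional area formula with weak*-lower semicontinuity of $w$ and Fatou's lemma. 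With $|\tilde\gamma_n'| \equiv L_n$ a.e., the area formula gives
\[
L_n \int_0^1 w(\tilde\gamma_n(s))\,ds \;=\; \int_{C_n} w(y)\,N(\gamma_n, y)\,d\calH^1(y) \;=\; 2\int_{C_n} w\,d\calH^1\,,
\]
and pointwise lsc of $w$ along $\tilde\gamma_n(s) \to \tilde\gamma(s)$ weak*, combined with Fatou, gives $L_\infty \int_0^1 w(\tilde\gamma)\,ds \leq 2 \liminf_n \int_{C_n} w\,d\calH^1$. The area formula for $\tilde\gamma$ itself reads $\int_0^1 w(\tilde\gamma)|\tilde\gamma'|\,ds = \int_C w\,N(\tilde\gamma,\cdot)\,d\calH^1$; granted the multiplicity bound $N(\tilde\gamma,\cdot) \geq 2$ at $\calH^1$-a.e.\ point of $C$ and using $|\tilde\gamma'| \leq L_\infty$ a.e., this chains to $2 \int_C w\,d\calH^1 \leq L_\infty \int_0^1 w(\tilde\gamma)\,ds \leq 2 \liminf_n \int_{C_n} w\,d\calH^1$, yielding (E) after dividing by $2$.

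The key obstacle will be the persistence of multiplicity $N(\tilde\gamma,\cdot) \geq 2$ at $\calH^1$-a.e.\ point of $C$: each $y_n \in C_n$ has two $\gamma_n$-preimages $s^1_n, s^2_n \in [0,1]$ that, along the extracted subsequence, converge to parameters $s^1(y), s^2(y) \in [0,1]$ with $\tilde\gamma(s^i) = y$, and one must rule out $s^1(y) = s^2(y)$ on a set of positive $\calH^1$-measure. This calls for a careful measure-theoretic argument exploiting the arcwise-connected structure of the $C_n$---for instance via the Eilenberg inequality of Paragraph~\ref{eilenberg} applied to projections onto suitable weak*-continuous functionals---to show that this ``collapse set'' has $\calH^1$-measure zero.
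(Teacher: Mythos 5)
Your handling of (F) and (D) is correct and more elementary than the paper's: you pass directly through $d^*$-equi-Lipschitz reparametrizations $\tilde\gamma_n$ and Arzel\`a--Ascoli, and the identification $C=\tilde\gamma([0,1])$ via Properties (I)--(II) together with the weak*-lower-semicontinuity of the norm (giving $\rmLip_{\|\cdot\|}\tilde\gamma\leq L_\infty$) is sound. The paper instead invokes the Gromov compactness theorem to embed the sequence $\{C_n\}$ in a single compact metric space $Z$ and constructs a $1$-Lipschitz ``transfer map'' $j:D\to B_X$; your route avoids that machinery for parts (D) and (F).

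For part (E), however, there is a genuine gap, and you have correctly identified where it lies but not closed it. The inequality you need, $N(\tilde\gamma,y)\geq 2$ for $\calH^1$-a.e.\ $y\in C$, \emph{is} essentially the content of Go\l\k{a}b's semicontinuity theorem (specialize to $w\equiv 1$), so it cannot be dispatched as a technical footnote; it is the heart of the matter. The difficulty is exactly the one you name --- the two preimages $s^1_n,s^2_n$ may coalesce in the limit --- and the suggested remedy (Eilenberg's inequality applied to weak*-continuous Lipschitz functionals) does not obviously control this: Eilenberg bounds $\int\rmcard(A\cap f^{-1}\{r\})\,d\calL^1(r)$ by $\calH^1(A)$, which constrains level sets of $C_n$, not the multiplicity of the limiting parametrization. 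The paper circumvents this issue entirely: after weak*-extracting a limit measure $\mu$ of the weighted-length measures $\mu_n=w(j_n(\cdot))\calH^1_Z\hel D_n$, it proves the pointwise density bound $\widetilde\Theta^1(\mu,z)\geq w(j(z))$ at every $z\in D$ by a branching argument (the two components of a curve through a point near $z$ each contribute length $\geq r/3$ in $B(\tilde z,r/3)$), and then applies the Vitali-type comparison theorem of Paragraph~\ref{26} to get $\mu(D)\geq\int_D w\circ j\,d\calH^1_Z$. This density/covering route replaces the fragile ``limit multiplicity $\geq 2$'' with a lower density bound on the limit measure, which is stable under weak* convergence; your proof would need either to prove the multiplicity claim directly (via a branching argument similar in spirit) or to cite Go\l\k{a}b's theorem in the metric form and adapt it to carry the weight, and until one of those is supplied the argument for (E) is incomplete. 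A secondary, smaller point: you should also verify that the ``double Euler tour'' parametrization with $\calH^1$-a.e.\ multiplicity exactly $2$ is available for \emph{arbitrary} compact connected sets of finite $\calH^1$ (not just finite graphs); this is true but deserves a reference, since the version in \cite[4.4.7--4.4.8]{AMBROSIO.TILLI} only asserts a $1$-Lipschitz surjection of length $\leq 2\calH^1(C_n)$ without the multiplicity statement.
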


\begin{Remark}
If the function $w$ fails to be weakly* lower semicontinuous, conclusion (E) does not need to hold, as the following counterexample shows.  Denote by $\{e_k\}_{k=1}^\infty$ the canonical orthonormal basis of $X=\ell_2$, and define $w:X\to [1,2]$ by $w(x):=\max\{1,2-8\rmdist(x, \rmspan\{e_1\}) \}$. Then consider the sequence $\{C_n\}\subset B_X$ of compact connected sets $C_n:=\gamma_n([0,1])$ where  
$$\gamma_n(t):=\begin{cases}
t e_n & \text{for $0\leq t\leq 1/8$}\,,\\[5pt]
\frac{1}{8} e_n +(t-\frac{1}{8})e_1  & \text{for $1/8< t\leq 7/8$}\,,\\[5pt]
(1-t)e_n + \frac{3}{4}e_1 & \text{for $7/8< t\leq 1$}\,.
\end{cases}
$$
One easily checks that \ref{compactness} (B) holds with  $C=[0,\frac{3}{4}e_1]$. On the other hand we have $\int_{C_n} w \, d\calH^1=\frac{9}{8}$ for every $n=1,2,\ldots$, while $\int_{C} w \, d\calH^1=\frac{3}{2}>\frac{9}{8}$. 
\end{Remark}

\begin{proof}
Conclusion (F) is a trivial consequence of assumption (B). If $\inf_n \rmdiam C_n = 0$ then $\rmdiam^* C = \lim_n \rmdiam^* C_n \leq \liminf_n \rmdiam C_n = 0$, thus $C$ is a singleton and there is nothing to prove. We henceforth assume that $a:=\inf_n \rmdiam C_n >0$. The weak* compactness of $B_X$ together with the nonvanishing and weak* lower semicontinuity of $w$ guarantee that $\eta := \inf_{B_X} w > 0$. Therefore $\calH^1(C_n) \leq \eta^{-1} \int_{C_n} w \,d\calH^1$, $n=1,2,\ldots$, and it ensues from (C) that $b:=\sup_n\calH^1(C_n) < \infty$. 
\par 
We claim that the sequence of metric spaces $\{C_n\}$ is equicompact. Indeed given $r > 0$, $n=1,2,\ldots,$ and $x_1,\ldots,x_{\kappa_n}$ in $C_n$ which are pairwise a distance at least $2r$ apart, it follows from Lemma \ref{Ahlfors} that
\begin{equation*}
\kappa_n r \leq \sum_{k=1}^{\kappa_n} \calH^1(C_n \cap B(x_k,r)) \leq \calH^1(C_n) \leq b \,,
\end{equation*}
whence $\kappa_n$ is bounded independently of $n$. It follows from the Gromov compactness Theorem
 (see e.g. \cite[4.5.7]{AMBROSIO.TILLI}) that there exists a compact metric space $(Z,d_Z)$, a subsequence of $\{C_n\}$ which we still denote as $\{C_n\}$, and isometric embeddings $i_n:(C_n,\|\cdot\|)\to (Z,d_Z)$ such that $D_n:=i_n(C_n)$ converge in Hausdorff distance in $Z$ to some compact set $D\subset Z$. 
\par 
We now consider the mappings $j_n:=i_n^{-1}: (D_n,d_Z)\to (C_n,\|\cdot\|)$. 
We claim that, restricting to a subsequence of $\{D_n\}$ if necessary (still denoted by $\{D_n\}$), {\em there exists a 1-Lipschitz map $j:(D,d_Z)\to (B_X,\|\cdot\|)$ with the following property: For any sequence $\{z_{k(n)}\}$ in $Z$ satisfying $z_{k(n)}\in D_{k(n)}$, $n=1,2,\ldots$, and $z_{k(n)}\to z \in D$, we have $d^*(j_{k(n)}(z_{k(n)}), j(z))\to 0$.}
In order to prove this we consider the graphs of~$j_n$,
$$G_n= (Z \times B_X) \cap \{(z,j_n(z)) : z\in D_n \}\,. $$
According to the Blaschke selection principle $\{G_n\}$ subconverges in Hausdorff distance, in the compact metric space $(Z,d_Z)\times (B_X,d^*)$, to some  compact set $G$. One readily checks that the projection of $G$ on $Z$ equals~$D$. In addition, we observe that for any pair $(z_1,x_1),(z_2,x_2)\in G$ we can find  $(z^n_1,j_n(z_1^n)),(z^n_2,j_n(z_2^n))\in G_n$ such that   $\{(z_k^n,j_n(z_k^n))\}_n$ converges to $(z_k,x_k)$ in $(Z,d_Z)\times (B_X,d_w)$, $k=1,2$, as $n\to\infty$.  Referring to the weak* lower semicontinuity of $\|\cdot\|$, and to the fact that $j_n$ is an isometry, we infer that 
$$\|x_1-x_2\|\leq \liminf_{n\to\infty} \|j_n(z_1^n)-j_n(z_2^n)\|=  \liminf_{n\to\infty} d_Z(z_1^n, z_2^n)=d_Z(z_1,z_2)\,. $$
Consequently $G$ is the graph in $Z\times B_X$ of a 1-Lipschitz map $j:(D,d_Z)\to(B_X,\|\cdot\|)$, i.e. 
$G=\{(z,j(z)) : z\in D)\}$.
In order to complete the proof of our claim, we need to establish the asserted property of $j$. We consider a sequence $\{z_{k(n)}\}$ in $Z$ such that $z_{k(n)} \in D_{k(n)}$, $n=1,2,\ldots$, and $d_Z(z_{k(n)},z) \to 0$ for some $z \in D$. Any subsequence of $\{j_{k(n)}(z_{k(n)})\}$ contains a subsequence itself converging weakly* to some $x \in B_X$. The Hausdorff distance convergence of $\{G_n\}$ to $G$ then implies that $(z,x) \in G$, i.e. $x=j(z)$. Since this is independent of the original subsequence, the conclusion follows.
\par 
We now establish that $C=j(D)$, starting with the inclusion $C \subset j(D)$. Given $x\in C$ we choose $x_n\in C_n$, $n=1,2,\ldots$, such that $d^*(x_n,x)\to 0$. Letting $z_n:=i_n(x_n)$, $n=1,2,\ldots$ we infer from the compactness of $Z$ that a suitable subsequence $\{z_{k(n)}\}$ of $\{z_n\}$ converges to some $z \in Z$. The Hausdorff convergence of $\{D_n\}$ to $D$ implies that $z \in D$, and in turn the claim of the preceding paragraph implies that $\lim_n d^*(x_{k(n)},j(z))=\lim_n d^*(j_{k(n)}(z_{k(n)}),j(z))=0$, thus $x = j(z)$. The other way round, given $z\in D$ we choose $z_n\in D_n$, $n=1,2,\ldots$, such that $d_Z(z_n,z) \to 0$. The claim of the preceding paragraph implies that $d^*(j_n(z_n),j(z))\to 0$. Since $j_n(z_n)\in C_n$ we conclude that $j(z)\in C$.
\par 
The connectedness of $D$ follows from that of each $D_n$, the Hausdorff convergence of $\{D_n\}$ to $D$ and the relation $\beta := \sup_n \calH^1(D_n) < \infty$, in the following fashion. Given $z_1,z_2 \in D$ and $n=1,2,\ldots$, we choose a curve $\Gamma_n \subset D_n$ with endpoints $z_1,z_2$. Since $\calH^1(\Gamma_n) \leq \beta$ we may select a parametrization $\gamma_n : [0,1] \to Z$ of $\Gamma_n$ so that $\rmLip \gamma_n \leq \beta$. It follows from the Arzela-Ascoli Theorem and the compactness of $Z$ that some subsequence of $\{\gamma_n\}$ converges uniformly to some Lipschitz $\gamma : [0,1] \to Z$. One readily checks that $\Gamma = \rmim \gamma$ is a curve in $D$ with endpoints $z_1$ and $z_2$. Conclusion (D) follows at once from the equality $C=j(D)$.
\par 
We now turn to proving conclusion (E). There is no restriction to assume that $\liminf_n \int_{C_n} w \, d\calH^1 < \infty$ and, extracting a subsequence of $\{D_n\}$ in the first place, we may also assume that this limit inferior is a limit:
\begin{equation*}
\liminf_n \int_{C_n} w \,d \calH^1 = \lim_n \int_{C_n} w \,d\calH^1 \,.
\end{equation*}
We also notice that there is no restriction to assume $\rmdiam D > 0$, for if $\rmdiam D = 0$ then $\calH^1(C) \leq \calH^1_Z(D) = 0$, because $\rmLip j \leq 1$, and (E) is trivially verified.
\par 
With each $n=1,2,\ldots$ we associate a finite Borel measure $\mu_n$ on $Z$ by the formula
\begin{equation*}
\mu_n(B) = \int_{B \cap D_n} w(j_n(z)) \,d\calH^1_Z(z) \,,
\end{equation*}
$B \subset Z$ Borel. Since $j_n$ is an isometry we observe that
\begin{equation}
\label{eq.2}
\mu_n(Z) = \int_{D_n} w(j_n(z))\,d\calH^1_Z(z) = \int_{C_n} w(x)\, d\calH^1(x) \,.
\end{equation}
Thus $\{\mu_n\}$ is bounded in $C(Z)^*$ and it follows from the Banach-Alaoglu and Riesz-Markov Theorems that some subsequence, still denoted $\{\mu_n\}$, converges weakly* in $C(Z)^*$ to a finite Borel measure $\mu$. We establish now that
\begin{equation}
\label{eq.1}
\widetilde{\Theta}^1(\mu,z) \geq w(j(z))
\end{equation}
for every $z \in D$. 
\par 
Fix $z \in D$, $0 < r < \rmdiam D$ and $\veps > 0$. Choose $z_n \in D_n$, $n=1,2,\ldots$, so that $d_Z(z_n,z) \to 0$. Choose next $z'_n \in D_n \setminus B(z,r)$ and a curve $\Gamma_n \subset D_n$ with endpoints $z_n$ and $z'_n$, according to \ref{prelim.curves}. 
If $n$ is sufficiently large then $B(z_n,r/3) \subset B(z,r)$, and arguing as in Lemma \ref{Ahlfors} we infer the existence of $\tilde{z}_n \in \Gamma_n \cap \rmbdry B(z_n,r/3)$. We let $\Gamma_n^i$, $i=1,2$, denote the two components of $\Gamma_n \setminus \{\tilde{z}_n\}$. Upon noticing that $\rmdiam \Gamma_n^i \geq r/3$, $i=1,2$, we infer from Lemma \ref{Ahlfors} that $\calH^1_Z(\Gamma_n^i \cap B(\tilde{z}_n,r/3)) \geq r/3$, $i=1,2$, and therefore
\begin{equation} 
\label{eq.3}
\calH^1_Z(D_n \cap B(\tilde{z}_n,r/3)) \geq \frac{2r}{3} \,.
\end{equation}
Considering a subsequence if necessary we may assume that $d_Z(\tilde{z}_n,\tilde{z}) \to 0$ for some $\tilde{z} \in Z$. Now we abbreviate $\rho = r/3 + 2\veps$ where $\veps > 0$ is chosen sufficiently small for $(1-\veps) \rho \leq r/3$, and we further consider only integers $n$ so large that $d_Z(z_n,z) < \veps$ and $d_Z(\tilde{z}_n,\tilde{z}) < \veps$. One then readily checks that $z \in B(\tilde{z},\rho)$ and that $B(\tilde{z}_n,r/3) \subset B(\tilde{z},\rho)$. It follows from the latter and \eqref{eq.3} that
\begin{multline}
\label{eq.4}
(1-\veps)(\rmdiam B(\tilde{z},\rho)) \leq 
(1-\veps) 2 \rho \leq \frac{2r}{3} \leq \calH^1_Z(D_n \cap B(\tilde{z}_n,r/3))\\ \leq \calH^1_Z(D_n \cap B(\tilde{z},\rho)) \,.
\end{multline}
We now make the {\em additional assumptions that $w$ be Lipschitz} (with respect to the norm $\|\cdot\|$ of $B_X$) and we observe that for every $\zeta \in B(\tilde{z},\rho)$ one has
\begin{multline*}
w(j_n(\zeta))\geq w(j_n(z_n)) -(\rmLip w) \|j_n(\zeta)-j_n(z_n)\| \\ = w(j_n(z_n)) -(\rmLip w) d_Z(\zeta,z_n)
\geq w(j_n(z_n)) -2 (\rmLip w) \rho \,.
\end{multline*}
It follows from \eqref{eq.4} and the above that
\begin{equation*}
\begin{split}
\mu_n(B(\tilde{z},\rho)) & = \int_{B(\tilde{z},\rho)} w(j_n(\zeta))\, d\calH^1_Z(\zeta) \\
& \geq \left( \inf_{\zeta \in B(\tilde{z},\rho)} w(j_n(\zeta)) \right) \calH^1_Z(D_n \cap B(\tilde{z},\rho)) \\
& \geq \big(w(j_n(z_n)) -2 (\rmLip w) \rho \big) (1-\veps) (\rmdiam B(\tilde{z},\rho)) \,,
\end{split}
\end{equation*}
for $n$ sufficiently large. Letting $n \to \infty$ in the above and referring to Portmanteau's Theorem, the weak* lower semicontinuity of $w$, and $\lim_n d^*(j_n(z_n),j(z)) = 0$, we infer that
\begin{equation*}
\mu(B(\tilde{z},\rho)) \geq \limsup_n \mu_n(B(\tilde{z},\rho)) \geq \big(w(j(z)) -2 (\rmLip w) \rho \big) (1-\veps) (\rmdiam B(\tilde{z},\rho)) \,.
\end{equation*}
Letting $\veps \to 0$ and $\rho \leq r \to 0$ we obtain \eqref{eq.1}.
\par 
Since \eqref{eq.1} holds for every $z \in D$ we infer from \ref{26} that for every $0 < t < 1$ and every $k \in \Z$,
\begin{equation*}
\mu(D^k) \geq t^k \calH^1_Z(D^k) \geq t \int_{D^k} \widetilde{\Theta}^1(\mu,z)\, d\calH^1_Z(z) 
\end{equation*}
where
\begin{equation*}
D^k = D \cap \{ z : t^{k-1} \geq \widetilde{\Theta}^1(\mu,z) > t^k \} \,.
\end{equation*}
Since we have not discussed the measurability of $\widetilde{\Theta}^1(\mu,\cdot)$ we refer to \cite[2.4.10 and 2.4.3(2)]{GMT} for the next estimate. Summing over $k \in \Z$ and letting $t \to 1^-$ yields
\begin{equation*}
\mu(Z) \geq \mu(D) \geq \int^*_D \widetilde{\Theta}^1(\mu,z)\, d\calH^1_Z(z) \geq \int_D w(j(z))\,  d\calH^1_Z(z) \,.
\end{equation*} 
Next we infer from the surjectivity of $j$ and the inequality $\rmLip j \leq 1$ that $\calH^1 \hel C \leq j_*(\calH^1_Z \hel D)$. Thus
\begin{equation*}
\int_C w \, d \calH^1 \leq \int_C w \,d \big[ j_* (\calH^1_Z \hel D) \big] = \int_D (w \circ j) \, d\calH^1_Z \,.
\end{equation*}
It then follows from \eqref{eq.2} that
\begin{multline*}
\int_C w \, d\calH^1  \leq \int_D (w \circ j) \, d\calH^1_Z 
\leq \mu(Z)
 = \lim_n \mu_n(Z) \\
 = \lim_n \int_{D_n} (w \circ j_n)\, d\calH^1_Z 
 = \lim_n \int_{C_n} w \, d \calH^1 \,.
\end{multline*}
This completes the proof in case $w$ is Lipschitz. It thus remains only to remove that assumption. To this end we introduce the Yosida approximations $w_k$ of $w$, defined by the relation
\begin{equation*}
w_k(x) = \inf \left\{ w(y) + k \|y-x\| : y \in B_X \right\} \,,
\end{equation*}
$x \in B_X$, $k=1,2,\ldots$. We easily check that the sequence $\{w_k\}$ is nondecreasing and converges everywhere to $w$, and that each $w_k$ is both Lipschitz and weakly* lower semicontinuous. Therefore,
\begin{equation*}
\int_C w_k\, d\calH^1 \leq \liminf_n \int_{C_n} w_k \, d\calH^1 \leq \liminf_n \int_{C_n} w \,d\calH^1 \,,
\end{equation*}
for each $k=1,2,\ldots$, and the conclusion follows from the Monotone Convergence Theorem.
\end{proof}

We now consider a nonempty finite set $F \subset X$ and a weakly* lower semicontinuous function
\begin{equation*}
w : X \to (0,+\infty]
\end{equation*}
such that $\inf_X w > 0$. 
We let $\calC_F$ denote the collection of {\em connected compact} sets $C \subset X$ such that $F \subset C$. With each $C \in \calC_F$ we associate the {\em weighted length}
\begin{equation*}
\calL_w(C) = \int_C w\, d \calH^1 \,.
\end{equation*}
We consider the variational problem
\begin{equation*}
(\calP_{F,w}) \begin{cases}
\text{minimize } \calL_w(C) \\
\text{among } C \in \calC_F \,,
\end{cases}
\end{equation*}
assuming that $\inf (\calP_{F,w}) < \infty$. Note that this finiteness assumption holds for instance if $w$ is bounded on the convex hull $K$ of $F$.
Indeed if $F = \{x_0,x_1,\ldots,x_\kappa\}$ we let $C_0 = \cup_{k=1}^\kappa \blseg x_0,x_k \brseg$, so that $C_0 \in \calC_F$ and $\calL_w(C_0) \leq (\sup_K w)\sum_{k=1}^\kappa \|x_k-x_0\|$.

\begin{Theorem}[Existence]
\label{ExistenceTH}
Whenever $F$ and $w$ are as above, the variational problem $(\calP_{F,w})$ admits at least one solution.
\end{Theorem}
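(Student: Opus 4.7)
The plan is to take a minimizing sequence, use the positivity $\inf_X w>0$ together with the Ahlfors lower bound of Lemma~\ref{Ahlfors}(A) to confine it to a single weak*-compact ball, extract a Hausdorff-convergent subsequence via Lemma~\ref{subsequence}, and then invoke the lower semicontinuity Theorem~\ref{compactness} to produce a limit that is automatically a minimizer.

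First, I would choose $\{C_n\}\subset \calC_F$ with $\calL_w(C_n)\downarrow \inf(\calP_{F,w})<\infty$, and set $\eta:=\inf_X w>0$. Then $\calH^1(C_n)\leq \eta^{-1}\calL_w(C_n)$ is uniformly bounded. Since each $C_n$ is connected, Lemma~\ref{Ahlfors}(A) applied at any $x\in C_n$, letting $r\uparrow \rmdiam C_n$, yields $\rmdiam C_n \leq \calH^1(C_n)$, so $R:=\sup_n \rmdiam C_n<\infty$. Picking any $x_0\in F$ gives $C_n\subset B(x_0,R)$ for every $n$.

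Next, the affine map $\Phi: X\to X$, $\Phi(x)=R^{-1}(x-x_0)$, is a weak*-to-weak* homeomorphism of $X$ sending $B(x_0,R)$ onto $B_X$, with $\calH^1(\Phi(A))=R^{-1}\calH^1(A)$ for every Borel $A$. Setting $\tilde C_n:=\Phi(C_n)\subset B_X$ and $\tilde w:=w\circ \Phi^{-1}$, one checks that $\tilde w$ is weakly* lower semicontinuous on $B_X$ with $\inf_{B_X}\tilde w \geq \eta$, and
\[
\int_{\tilde C_n} \tilde w\,d\calH^1 = R^{-1} \calL_w(C_n)
\]
is uniformly bounded. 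Lemma~\ref{subsequence} then provides, along a non-relabeled subsequence, a nonempty closed $\tilde C\subset B_X$ with $\rmdist_\calH^*(\tilde C_n,\tilde C)\to 0$. Hypotheses (A)--(C) of Theorem~\ref{compactness} are all met, and its conclusions give that $\tilde C$ is compact and connected, that $\Phi(F)\subset \tilde C$, and that
\[
\int_{\tilde C}\tilde w\,d\calH^1 \leq \liminf_n \int_{\tilde C_n}\tilde w\,d\calH^1.
\]
Setting $C:=\Phi^{-1}(\tilde C)$, the scale factor $R$ cancels on both sides, so $C\in \calC_F$ satisfies $\calL_w(C)\leq \liminf_n \calL_w(C_n)=\inf(\calP_{F,w})$, hence is a minimizer.

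The main obstacle is the diameter estimate needed to confine the whole minimizing sequence inside a single translate of $R\cdot B_X$: it hinges on the combined use of $\inf_X w>0$ (to pass from weighted to unweighted length) and the connectedness of the competitors via Lemma~\ref{Ahlfors}(A) (to pass from length to diameter). Once this is secured, all the actual work — the weak*-compactness manipulations, the stability of connectedness under Hausdorff limits of uniformly bounded length, the preservation of $F\subset C_n$, and the weighted lower semicontinuity — is entirely packaged inside Theorem~\ref{compactness}.
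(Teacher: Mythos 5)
Your proposal is correct and follows essentially the same route as the paper: a priori estimate via $\inf_X w>0$ and Lemma~\ref{Ahlfors} to confine the minimizing sequence to one ball, then Lemma~\ref{subsequence} and Theorem~\ref{compactness}. The only differences are cosmetic: the paper bounds the diameter by joining each $C_n$ to $x_0\in F$ by a curve and invoking Lemma~\ref{Ahlfors}(B), while you invoke Lemma~\ref{Ahlfors}(A) directly, and you make explicit the affine rescaling $\Phi$ needed to transport the sequence into $B_X$ where Theorem~\ref{compactness} is literally stated — a detail the paper leaves implicit with the phrase ``applied with $B(x_0,R)$.''
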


\begin{proof}
We apply the direct method of calculus of variations.
Define $\beta := \inf (\calP_{F,w})$ and let $\{C_n\}$ be a minimizing sequence such that $\calL_w(C_n) \leq 1 + \beta$, $n=1,2,\ldots$. Given $n$, let $x \in C_n$ be such that $\|x-x_0\| = \max_{y \in C_n} \|y-x_0\|$, where $x_0 \in F$. Let $\Gamma$ be a curve in $C_n$ with endpoints $x$ and $x_0$. It follows that
\begin{equation*}
1 + \beta \geq \calL_w(C_n) \geq \int_{\Gamma_n} w \,d\calH^1 \geq (\inf_X w) \calH^1(\Gamma_n) \geq (\inf_X w) \|x-x_0\| \,.
\end{equation*}
Therefore $C_n \subset B(x_0,R)$ where $R = (1+\beta)(\inf_X w)^{-1}$ and the conclusion follows from Theorem \ref{compactness} applied with $B(x_0,R)$.
\end{proof}


We end this section by showing that the minimizers of problem $(\calP_{F,w})$ are {\em almost minimizing} in a sense to be defined momentarily, and the remaining part of the paper will be devoted to studying the regularity properties of these (more general) almost minimizing sets.

\begin{Empty}[Almost minimizing sets]
\label{defalmmin} 
Given a gauge $\xi$, an open set $U \subset X$, and $r_0 > 0$, we say a compact connected set $C \subset X$ is {\em $(\xi,r_0)$ almost minimizing in $U$} provided $\calH^1(C) < \infty$ and the following holds: For every $x \in C \cap U$, every $0 < r \leq r_0$ such that $B(x,r) \subset U$, and every compact connected set $C' \subset X$ with 
\begin{equation*}
C' \setminus B(x,r) = C \setminus B(x,r)
\end{equation*}
one has
\begin{equation}
\calH^1(C\cap B(x,r))\leq (1+\xi(r))\calH^1(C'\cap B(x,r) ) \,. \label{almostmin}
\end{equation}
A set $C'$ as above is called a {\em competitor} for $C$ in the ball $B(x,r)$.
\end{Empty}

Given an open set $U \subset X$, a function $w : U \to \R$, and $r > 0$, we recall that the {\em oscillation} of $w$ at scale $r > 0$ is defined by
\begin{equation*}
\rmosc(w,r) = \sup \{ | w(x_1) - w(x_2) | : x_1,x_2 \in U \text{ and } \|x_1-x_2\| \leq r \} \,.
\end{equation*}
Thus $\lim_{r \to 0^+} \rmosc(w,r)=0$ if and only if $w$ is uniformly continuous.

\begin{Theorem} 
\label{lenghtminimiz} 
Assume that $F \subset X$ is a nonempty finite set, that $w:X\to [a,b]$ (where $0 < a < b < \infty$) is uniformly continuous, and that the variational problem $(\calP_{F,w})$ admits a minimizer $C$. It follows that $C$ is $(\xi,\infty)$ almost minimizing in $X \setminus F$, relative to the gauge
$$\xi(r)=\rmosc(w,r)\left(\frac{a+b}{a^2}\right).$$
\end{Theorem}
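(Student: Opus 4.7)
The strategy is straightforward: deduce the almost-minimizing inequality from the global minimality of $C$ in $(\calP_{F,w})$, then convert the resulting comparison of weighted lengths into a comparison of plain $\calH^1$-lengths via the uniform continuity of $w$. The only non-routine point is verifying that the competitor set used in the almost-minimizing formulation is also admissible for the global variational problem; once that is in hand, the rest is cancellation and elementary estimates.

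First I would fix $x \in C \cap (X \setminus F)$, $r > 0$ with $B(x,r) \subset X \setminus F$, and a compact connected competitor $C' \subset X$ satisfying $C' \setminus B(x,r) = C \setminus B(x,r)$. The hypothesis $B(x,r) \cap F = \emptyset$ combined with $F \subset C$ forces
\begin{equation*}
F \subset C \setminus B(x,r) = C' \setminus B(x,r) \subset C',
\end{equation*}
so $C' \in \calC_F$. Minimality then yields $\calL_w(C) \leq \calL_w(C')$, and decomposing each weighted length over $B(x,r)$ and its complement and cancelling the identical outer contributions produces
\begin{equation*}
\int_{C \cap B(x,r)} w \, d\calH^1 \leq \int_{C' \cap B(x,r)} w \, d\calH^1.
\end{equation*}

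Next I would convert this to a length estimate. Write $L := \calH^1(C \cap B(x,r))$, $L' := \calH^1(C' \cap B(x,r))$, and let $w_{\min}$, $w_{\max}$ denote the infimum and supremum of $w$ on $B(x,r)$. Bounding the integrands pointwise gives $w_{\min} L \leq w_{\max} L'$. The triangle inequality $|w(y_1) - w(y_2)| \leq |w(y_1) - w(x)| + |w(x) - w(y_2)| \leq 2\,\rmosc(w,r)$ for any $y_1,y_2 \in B(x,r)$ (both lying within distance $r$ of $x$) yields $w_{\max} - w_{\min} \leq 2\,\rmosc(w,r)$, while $w_{\min} \geq a$ by hypothesis. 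Therefore
\begin{equation*}
L \leq L'\left(1 + \frac{w_{\max} - w_{\min}}{w_{\min}}\right) \leq L'\left(1 + \frac{2\,\rmosc(w,r)}{a}\right) \leq L'\left(1 + \rmosc(w,r)\,\frac{a+b}{a^2}\right),
\end{equation*}
where the last inequality uses $2/a \leq (a+b)/a^2$, equivalent to $a \leq b$. This is exactly $L \leq (1 + \xi(r))L'$ for the stated gauge $\xi$, and since $w$ is uniformly continuous the function $r \mapsto \rmosc(w,r)$ tends to $0$ at $0$, confirming that $\xi$ is indeed a gauge.
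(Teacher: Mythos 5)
Your proof is correct and follows essentially the same route as the paper's: fix the ball and competitor, cancel the outer contributions to reduce to the inequality $\int_{C\cap B}w\,d\calH^1\leq\int_{C'\cap B}w\,d\calH^1$, then trade the weighted comparison for a bare length comparison using the oscillation of $w$ and the lower bound $a$. Your organization around $w_{\min}$ and $w_{\max}$ actually yields the slightly sharper constant $2/a$, which you then relax to the stated $(a+b)/a^2$; the paper obtains $(a+b)/a^2$ directly by centering the oscillation estimate at $w(x)$, but the two computations are interchangeable.
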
 

\begin{proof} 
Notice that $\xi$ is indeed a gauge since $w$ is both bounded and uniformly continuous. Define $U = X \setminus F$, and fix $x$ and $r$ such that $x \in C$ and $B(x,r) \subset U$. We abbreviate $B = B(x,r)$ and we observe that for each competitor $C'$ in $B$ one has
\begin{eqnarray*}
a \calH^1(C\cap B)\leq \int_{C\cap B} w(x) \,d\calH^1(x) \leq \int_{C'\cap B} w(x)\, d\calH^1(x) \leq  b\calH^1(C'\cap B), \label{comparaison1}
\end{eqnarray*}
as well as
\begin{equation*}
\begin{split}
(w(x_0)-\rmosc(w,r))\calH^1(C\cap B)  &\leq \int_{C\cap B} w(x) \,d\calH^1(x) \\
&\leq  \int_{C'\cap B} w(x) \,d\calH^1(x) \\
&\leq  (w(x_0)+\rmosc(w,r))\calH^1(C'\cap B) .\label{comparaison2}
\end{split}
\end{equation*}
Therefore,
\begin{equation*}
\begin{split}
w(x_0)\calH^1(C\cap B) &\leq (w(x_0)+\rmosc(w,r))\calH^1(C'\cap B) + \rmosc(w,r)\calH^1(C\cap B)  \\
&\leq  (w(x_0)+\rmosc(w,r))\calH^1(C'\cap B) + \rmosc(w,r) \frac{b}{a}\calH^1(C'\cap B) \,,
\end{split}
\end{equation*}
and the conclusion follows upon dividing by $w(x_0)\geq a >0$.
\end{proof}

Since we are considering 1 dimensional geometric variational problems, it is worth pointing out the easy local topological regularity of minimizers.

\begin{Theorem}
\label{toporeg}
Assume that $\rmcard F = 2$ and that $w : X \to \R^+ \setminus \{0\}$ is uniformly continuous. It follows that every minimizer of problem $(\calP_{F,w})$ is a curve $\Gamma$ with endpoints those of $F$, and that $\Theta^1(\calH^1 \hel \Gamma,x) = 1$ for each $x \in \mathring{\Gamma}$.
\end{Theorem}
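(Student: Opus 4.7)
The plan is to first upgrade every minimizer $C$ to a single curve $\Gamma$ with endpoints $F$, and then force the asymptotic ``straight-line'' behaviour $\|\gamma(t)-x\| \sim |t-t_0|$ at each interior point $x = \gamma(t_0)$ by short-circuiting sub-arcs with chords.

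\textbf{Step 1: $C$ is a curve.} Since $C$ is compact connected, of finite length, and contains both points of $F$, \ref{prelim.curves} provides a curve $\Gamma \subset C$ joining them. Then $\Gamma \in \calC_F$, so minimality yields $\calL_w(C) \leq \calL_w(\Gamma)$, while the inclusion $\Gamma \subset C$ together with $w > 0$ yields the reverse inequality; equality forces $\calH^1(C \setminus \Gamma) = 0$. Any $y \in C \setminus \Gamma$ would satisfy $\rmdist(y, \Gamma) > 0$ (as $\Gamma$ is closed), and Lemma~\ref{Ahlfors}(A) applied to the connected set $C$ at $y$ would give $\calH^1(C \cap B(y, r)) \geq r > 0$ for small $r$, contradicting $C \cap B(y, r) \subset C \setminus \Gamma$. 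Hence $C = \Gamma$.

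\textbf{Step 2: lower density bound.} Fix the 1-Lipschitz injective parametrization $\gamma : [0, L] \to X$ of $\Gamma$ given by \ref{prelim.curves}, with $L = \calH^1(\Gamma)$, and $x = \gamma(t_0)$ for $t_0 \in (0, L)$. Decomposing $\Gamma$ as $\gamma([0, a]) \cup \gamma([a, b]) \cup \gamma([b, L])$ (overlapping only at the two points $\gamma(a)$, $\gamma(b)$) and invoking $\rmLip \gamma \leq 1$ with $\calH^1(\Gamma) = L$ forces equality in each piece, yielding $\calH^1(\gamma([a, b])) = b - a$ for all $[a,b] \subset [0,L]$. In particular $\gamma([t_0 - r, t_0 + r]) \subset \Gamma \cap B(x, r)$ for small $r$ gives $\calH^1(\Gamma \cap B(x, r)) \geq 2r$.

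\textbf{Step 3: upper density via short-circuit competitor.} For $t$ close to $t_0$ (say $t > t_0$),
\begin{equation*}
C' \,=\, \gamma([0, t_0]) \cup \blseg \gamma(t_0), \gamma(t) \brseg \cup \gamma([t, L])
\end{equation*}
belongs to $\calC_F$. Both the arc $\gamma([t_0, t])$ and the segment lie in the convex ball $B(x, t - t_0)$, because $\gamma$ is 1-Lipschitz and $\gamma(t_0) = x$. With $f(t) := \|\gamma(t) - x\|$ and $\epsilon(\rho) := \rmosc(w, \rho)$, minimality $\calL_w(\Gamma) \leq \calL_w(C')$ combined with cancellation of the unchanged parts gives
\begin{equation*}
(w(x) - \epsilon(t - t_0))(t - t_0) \,\leq\, (w(x) + \epsilon(t - t_0))\, f(t),
\end{equation*}
hence $f(t)/|t - t_0| \to 1$ as $t \to t_0$ (using the symmetric competitor on the left). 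Given $\delta \in (0,1)$, fix $\eta > 0$ so that $f(t) \geq (1 - \delta)|t - t_0|$ for $|t - t_0| < \eta$. By injectivity $\gamma([0, L] \setminus (t_0 - \eta, t_0 + \eta))$ is compact and misses $x$, hence sits at some positive distance $\rho_0$ from $x$. For $0 < r < \min(\rho_0, (1 - \delta)\eta)$, $\gamma(t) \in B(x, r)$ forces $|t - t_0| \leq r/(1 - \delta)$, so $\Gamma \cap B(x, r) \subset \gamma([t_0 - r/(1-\delta), t_0 + r/(1-\delta)])$ and Lipschitz-ness gives $\calH^1(\Gamma \cap B(x, r)) \leq 2r/(1 - \delta)$. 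Taking $r \to 0$ followed by $\delta \to 0$ matches the lower bound and yields $\Theta^1(\calH^1 \hel \Gamma, x) = 1$.

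The hardest step is the short-circuit comparison: in a general Banach space the minimizer need not be a straight segment---for instance in $\ell_\infty^2$ with $w \equiv 1$ every 1-Lipschitz graph is length-minimizing---so the asymptotic $f(t) \sim |t - t_0|$ cannot be read off from any geometric ellipticity of the norm and must be extracted purely from the positivity of $w$ and the vanishing of its oscillation at small scales.
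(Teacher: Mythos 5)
Your proof is correct and follows essentially the same route as the paper's: upgrade the minimizer $C$ to a curve $\Gamma$, then short-circuit subarcs of $\gamma$ near $x$ with straight chords and compare weighted lengths using the vanishing of $\rmosc(w,\cdot)$. The only organizational differences are that you prove the lower density bound $\Theta^1_*(\calH^1 \hel \Gamma,x)\ge 1$ directly from the arclength parametrization and first extract the pointwise asymptotic $\|\gamma(t)-\gamma(t_0)\|/|t-t_0|\to 1$ before deducing $\Gamma\cap B(x,r)\subset\gamma([t_0-r/(1-\delta),t_0+r/(1-\delta)])$, whereas the paper replaces the arc between the extreme crossings $t_\pm$ of $\rmbdry B(x_0,r)$ by the two chords $\blseg\gamma(t_-),x_0\brseg\cup\blseg x_0,\gamma(t_+)\brseg$ in one stroke and cites \ref{density.first}(C) for the lower bound; both implementations are sound, and yours has the mild virtue of being self-contained.
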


\begin{proof}
If $C$ is a minimizer then it contains a curve $\Gamma$ with endpoints those of $F$, according to \ref{prelim.curves}. It follows that $\calL_w(C \setminus \Gamma) = 0$, and in turn $\calH^1(C \setminus \Gamma) = 0$. From this we infer that in fact $C \setminus \Gamma = \emptyset$, for if $x \in C \setminus \Gamma$ and $r > 0$ is so that $B(x,r) \cap \Gamma = \emptyset$ then $0 < \calH^1(C \cap B(x,r)) \leq \calH^1(C \setminus \Gamma)$, according to \ref{Ahlfors}, a contradiction.
\par 
Let $x_0 \in \mathring{\Gamma}$ and $r > 0$ so that $B(x_0,r) \cap F = \emptyset$. We choose an arclength parametrization $\gamma : [a,b] \to X$ of $\Gamma$, and $a < t_0 < b$ such that $x_0=\gamma(t_0)$. Define
\begin{gather*}
t_-:= \inf \{ t \leq t_0 : \gamma(t) \in \rmbdry B(x_0,r) \},\\
t_+:= \sup \{ t \geq t_0 : \gamma(t) \in \rmbdry B(x_0,r) \} .
\end{gather*}
We create a competitor for the problem $(\calP_{F,w})$ as follows:
\begin{equation*}
C' = \gamma([a,t_-]) \cup \blseg \gamma(t_-) , x_0 \brseg \cup \blseg x_0,\gamma(t_+) \brseg \cup \gamma([t_+,b]) \,.
\end{equation*}
From the relation $\calL_w(C) \leq \calL_w(C')$ we obtain
\begin{equation*}
\int_{\gamma([t_{-},t_{+}])} w \,d \calH ^1
\leq \int_{\blseg\gamma(t_-),x_0 \brseg \cup \blseg x_0,\gamma(t_+) \brseg } w \,d \calH ^1 \,.
\end{equation*}
It entails from the definition of $t_+$ and $t_-$ that $\Gamma \cap B(x_0,r) \subset \gamma([t_-,t_+]))$, thus in fact
\begin{equation*}
\int_{\Gamma \cap B(x_0,r)} w \,d \calH ^1
\leq \int_{\blseg\gamma(t_-),x_0 \brseg \cup \blseg x_0,\gamma(t_+) \brseg } w \,d \calH ^1 \,.
\end{equation*}
We next infer from the uniform continuity of $w$ that 
\begin{equation*}
(w(x_0) - \rmosc(w;r)) \calH^1(\Gamma \cap B(x_0,r)) \leq \int_{\Gamma \cap B(x_0,r)} w \,d\calH^1 \,,
\end{equation*}
as well as
\begin{equation*}
\int_{\blseg\gamma(t_-),x_0 \brseg \cup \blseg x_0,\gamma(t_+) \brseg } w \,d \calH ^1 \leq (w(x_0) + \rmosc(w;r)) 2r \,.
\end{equation*}
Therefore, if $r>0$ is sufficiently small then,
\begin{equation*}
\frac{\calH^1(\Gamma \cap B(x_0,r))}{2r} \leq \frac{ w(x_0) + \rmosc(w;r)}{w(x_0) - \rmosc(w;r)} \,.
\end{equation*}
Letting $r \to 0^+$ we infer that $\Theta^{*\,1}(\calH^1 \hel \Gamma , x_0) \leq 1$. The reverse inequality $\Theta^1_*(\calH^1 \hel \Gamma,x_0) \geq 1$ is proved in \ref{density.first}(C). 
\end{proof}

\section{Almost minimizing sets in arbitrary Banach spaces}
\label{sec.4}

We establish the basic discrepancy between regular and singular points of almost minimizing sets. 

\begin{Empty}[Local hypothesis about the ambient Banach space]
In this section $X$ denotes a separable Banach space.
\end{Empty}

\begin{Scholium}
We will repeatedly use (without mention) the following observation. If $B \subset X$ is a closed ball of radius $r>0$ and $\Gamma$ is a curve in $X$ with endpoints $a$ and $b$ so that $a \not\in B$ and $b \in B$, then $\Gamma \cap \rmbdry B \neq \emptyset$. This is because if $\gamma : [0,1] \to X$ parametrizes $\Gamma$ so that $f(0)=a$ and $f(1)=b$, and if $x$ is the center of the ball $B$, then $f(t) = \| \gamma(t) - x \|$ is continuous and $f(1) \leq r < f(0)$. In fact, there is the smallest parameter $t^*$ such that $\gamma(t^*) \in \rmbdry B$. Thus the subcurve $\Gamma'$ of $\Gamma$ with endpoints $a$ and $\gamma(t^*)$ is so that $\mathring{\Gamma'} \cap B = \emptyset$.
\end{Scholium}

\begin{Proposition}
\label{prop.comp}
Assume that:
\begin{enumerate}
\item[(A)] $C \subset X$ is compact and connected, $\calH^1(C) < \infty$, $x \in C$, $r  > 0$, and $B = B(x,r)$;
\item[(B)] $N \in \N \setminus\{0\}$, $\rmcard C \cap \rmbdry B = N$, and
\begin{equation*}
C \cap \rmbdry B = \{ x_1,\ldots,x_N\} \,.
\end{equation*}
\end{enumerate}
If $x_0 \in B$ then
\begin{equation*}
C' = (C \setminus B) \cup \left( \cup_{n=1}^N \blseg x_0,x_n \brseg \right)
\end{equation*}
is a competitor for $C$ in $B$. In particular, if $N=1$ then
\begin{equation*}
C' = C \setminus \rmint B
\end{equation*}
is a competitor for $C$ in $B$.
\end{Proposition}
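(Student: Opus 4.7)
The plan is to verify the three defining conditions of a competitor, namely that $C'$ is compact, connected, and satisfies $C'\setminus B = C\setminus B$. The agreement outside $B$ is immediate: since $B$ is a closed ball and each $x_n \in \rmbdry B \subset B$, while $x_0 \in B$, convexity of $B$ forces $\blseg x_0,x_n\brseg \subset B$, so the added segments contribute nothing outside $B$.

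For compactness, I would first rewrite
\begin{equation*}
C' = (C \setminus \rmint B) \cup \bigcup_{n=1}^N \blseg x_0, x_n \brseg,
\end{equation*}
which is legitimate because $C \cap \rmbdry B = \{x_1,\ldots,x_N\}$ is already contained in the union of segments. Now $C \setminus \rmint B$ is closed in $X$ (being $C$ intersected with a closed set) and bounded, hence compact; adding finitely many compact line segments preserves compactness.

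The heart of the argument is connectedness, which I would prove by path-connecting every point of $C'$ to the hub $x_0$. Points lying on a segment $\blseg x_0, x_n\brseg$ are trivially connected to $x_0$. For $y \in C \setminus \rmint B$, invoke \ref{prelim.curves}: since $C$ is compact connected with $\calH^1(C)<\infty$, there is a curve $\Gamma \subset C$ with endpoints $y$ and $x_1$. If $y \notin B$, apply the Scholium to this curve (with $a = y$, $b = x_1$) to extract a subcurve $\Gamma'$ from $y$ to some $\gamma(t^*) \in \rmbdry B$ whose interior lies outside $B$; because $C \cap \rmbdry B = \{x_1,\ldots,x_N\}$, the endpoint $\gamma(t^*)$ is one of the $x_m$, and $\Gamma' \subset C \setminus \rmint B \subset C'$. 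Concatenating $\Gamma'$ with the segment $\blseg x_0,x_m\brseg \subset C'$ yields a path in $C'$ from $y$ to $x_0$. The remaining case $y \in \rmbdry B \cap C$ is handled directly by the corresponding segment. Thus $C'$ is path-connected, hence connected.

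The particular case $N=1$ follows by choosing $x_0 := x_1$ in the general construction: then the single segment $\blseg x_1,x_1\brseg = \{x_1\}$ degenerates, and $C' = (C\setminus B)\cup\{x_1\} = C \setminus \rmint B$, since $C \cap B = (C \cap \rmint B) \cup \{x_1\}$. The main (though mild) obstacle is the connectedness step, where one must be careful not to assume that the curve $\Gamma$ stays outside $B$ a priori; the Scholium is the right tool to cut $\Gamma$ at its first incursion into $B$ so as to land exactly on one of the finitely many available boundary points.
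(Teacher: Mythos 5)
Your proof is correct and takes essentially the same route as the paper's: verify compactness, verify agreement outside $B$, and establish connectedness by truncating curves of $C$ at their first intersection with $\rmbdry B$ (where, by hypothesis, the hitting point must be one of the $x_n$) and then routing through the added segments. The only stylistic difference is that you connect every point of $C'$ to the hub $x_0$, whereas the paper directly connects an arbitrary pair $(a,b)$ by truncating the curve at both ends; the hub formulation saves a case split but is not a genuinely different argument.
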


\begin{proof}
Since $C'$ is the union of $C \setminus \rmint B$ and finitely line segments, it is compact. We now show that any pair of $a,b \in C'$ is connected by a curve contained in $C'$. If $a$ and $b$ both belong to $B$, they are related connected by a curve in $C' \cap B$. We now assume one or both of $a$ and $b$ does not belong to $B$. Recalling \ref{prelim.curves}, we select a curve $\Gamma \subset C$ with endpoints $a$ and $b$. If $\mathring{\Gamma} \cap B = \emptyset$ we are done. Assume $a \not \in B$ and choose $n \in \{1,\ldots,N\}$ such that $x_n$ is closest to $a$ along $\Gamma$, and denote $\Gamma_{a,x_n}$ the corresponding subcurve of $\Gamma$, so that $\Gamma_{a,x_n} \subset C'$. If $b \in B$ then $b$ can be joinded to $x_n$ in $C' \cap B$ by a curve $\Gamma_{b,x_n}$, and $\Gamma_{a,x_n} \cup \Gamma_{x_n,b} \subset C'$ is a curve with endpoints $a$ and $b$. If instead $b \not \in B$ then let $x_m$, $m \in \{1,\ldots,N\}$ be closest to $b$ along $\Gamma$, and denote $\Gamma_{b,x_m}$ the corresponding subcurve of $\Gamma$, so that $\Gamma_{b,x_m} \subset C'$. Finally, choose a curve $\Gamma_{x_n,x_m}$ contained in $C' \cap B$ with endpoints $x_n$ and $x_m$ and notice that $\Gamma_{a,x_n} \cup \Gamma_{x_n,x_m} \cup \Gamma_{x_m,b} \subset C'$ is a curve with endpoints $a$ and $b$.
\end{proof}

\begin{Theorem}
\label{local.topology}
Assume that:
\begin{enumerate}
\item[(A)] $C \subset X$ is compact and connected, $U \subset X$ is open, $0 < r < r_0$, $x \in C$, $B(x,r) \subset U$, $\xi$ is a gauge;
\item[(B)] $C$ is $(\xi,r_0)$ almost minimizing in $U$;
\item[(C)] $r < \min(\rmdiam C,r_0)$.
\end{enumerate}
The following hold:
\begin{enumerate}
\item[(D)] $\rmcard (C \cap \rmbdry B(x,r)) \geq 2$;
\item[(E)] If $\rmcard( C \cap \rmbdry B(x,r)) = 2$ then $C \cap B(x,r)$ contains a Lipschitz curve whose endpoints are $\{x_0,x_1\} = C \cap \rmbdry B(x,r)$;
\item[(F)] If $\rmcard( C \cap \rmbdry B(x,r)) = 2$ and $r$ is a point of $\calL^1$ approximate continuity of $\rho \mapsto \rmcard (C \cap \rmbdry B(x,\rho))$ then for every $0 < \veps < 1$ there exists $(1-\veps) r < \rho < r$ such that the following dichotomy holds:
\begin{enumerate}
\item[{\bf either}] $C \cap B(x,\rho)$ is a Lipschitz curve and $C \cap \rmbdry B(x,\rho)$ consists of its endpoints;
\item[{\bf or}] $C \cap B(x,r)$ contains three Lipschitz curves $\Gamma_1,\Gamma_2,\Gamma_3$ whose intersection is a singleton $\{\tilde{x}\} = \Gamma_1 \cap \Gamma_2 \cap \Gamma_3$, and $\tilde{x}\in \rmint B(x,r)$ is an endpoint of each $\Gamma_1, \Gamma_2$ and $\Gamma_3$.
\end{enumerate}
\end{enumerate}
\end{Theorem}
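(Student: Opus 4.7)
The strategy is to prove (D), (E), (F) in turn, in each case pitting the almost minimizing inequality against the lower bound $\calH^1(C \cap B(x,r)) \geq r$ supplied by Lemma \ref{Ahlfors}(A), valid since $r < \rmdiam C$.

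For (D), I argue by contradiction on the value of $\rmcard(C \cap \rmbdry B(x,r))$. If that number is zero, then either there exists $y \in C$ with $\|y-x\|>r$, giving the clopen decomposition $C = (C \cap B(x,r)) \sqcup (C \setminus B(x,r))$ into two nonempty relatively open sets contradicting connectedness of $C$; or else $C \subset \rmint B(x,r)$, in which case the competitor $C' = \{x\}$ forces $\calH^1(C \cap B(x,r)) \leq (1+\xi(r)) \cdot 0 = 0$, contradicting Lemma \ref{Ahlfors}(A). If the count is one, the $N=1$ case of Proposition \ref{prop.comp} delivers the competitor $C' = C \setminus \rmint B(x,r)$, again of zero $\calH^1$-measure in $B(x,r)$, yielding the same clash with Lemma \ref{Ahlfors}(A).

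For (E), write $C \cap \rmbdry B(x,r) = \{x_0,x_1\}$ and pick via \ref{prelim.curves} an injective curve $\Gamma \subset C$ joining $x_0$ to $x_1$, parametrized by $\gamma : [0,1] \to C$. Injectivity forces $\gamma^{-1}(\rmbdry B(x,r)) = \{0,1\}$, so the connected image $\gamma((0,1))$ lies entirely either in $\rmint B(x,r)$ or in $X \setminus B(x,r)$. In the first case $\Gamma \subset B(x,r)$ and a $1$-Lipschitz arclength reparametrization (again \ref{prelim.curves}) provides the desired curve. In the second case both $x_0$ and $x_1$ are accumulated from $C \setminus B(x,r)$ by $\Gamma$, so
\begin{equation*}
C' := \overline{C \setminus B(x,r)} = \bigl(C \setminus B(x,r)\bigr) \cup \{x_0,x_1\}
\end{equation*}
is compact; a first-entry argument shows every $z \in C \setminus B(x,r)$ is joined to $x_0$ or $x_1$ by a subcurve lying in $C'$, and combined with $\Gamma \subset C'$ this renders $C'$ path-connected. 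Hence $C'$ is a bona fide competitor with $\calH^1(C' \cap B(x,r)) = 0$, and the almost minimizing inequality clashes once more with Lemma \ref{Ahlfors}(A). Therefore only the first case can occur.

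For (F), observe first that the proof of (E) actually delivers the connectedness of $C \cap B(x,r)$ under the hypothesis $\rmcard(C \cap \rmbdry B(x,r))=2$, since any $z \in C \cap B(x,r)$ is similarly joined to $x_0$ or $x_1$ by a subcurve staying inside $C \cap B(x,r)$. The approximate continuity of the integer-valued $\rho \mapsto \rmcard(C \cap \rmbdry B(x,\rho))$ at $r$ then produces, in every interval $((1-\veps)r,r)$, some $\rho$ at which the count is again $2$. Applying (E) at $\rho$ yields a Lipschitz curve $\Gamma_\rho \subset C \cap B(x,\rho)$ joining the two boundary points. If $C \cap B(x,\rho) = \Gamma_\rho$, the first alternative holds. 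Otherwise I pick $z \in C \cap B(x,\rho) \setminus \Gamma_\rho$, use the connectedness of $C \cap B(x,\rho)$ together with \ref{prelim.curves} to choose a curve from $z$ to $\Gamma_\rho$ inside $C \cap B(x,\rho)$, and truncate at its first hit $\tilde x$ with $\Gamma_\rho$ to obtain $\Gamma_z$ with $\Gamma_z \cap \Gamma_\rho = \{\tilde x\}$. When $\tilde x \in \mathring{\Gamma}_\rho$, splitting $\Gamma_\rho$ at $\tilde x$ yields two Lipschitz subcurves which together with $\Gamma_z$ realize the desired tripod in $C \cap B(x,\rho) \subset B(x,r)$, with centre $\tilde x \in B(x,\rho) \subset \rmint B(x,r)$ since $\rho<r$. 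The hard part is the degenerate situation where $\tilde x$ forcibly lands on $\rmbdry B(x,\rho)$: then the two halves of $\Gamma_\rho$ degenerate and a third branch of the tripod has to be extracted from $C \cap (B(x,r) \setminus B(x,\rho))$, typically by applying (E) at scale $r$ and tracking the crossings of the resulting curve with $\rmbdry B(x,\rho)$, or by exploiting the approximate continuity more astutely to select $\rho$ avoiding this degeneracy.
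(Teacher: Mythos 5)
Your (D) and (E) are correct and follow the paper's route, with (D) in fact a bit more careful: you explicitly rule out the case $C \subset \rmint B(x,r)$ via the competitor $\{x\}$, whereas the paper's opening sentence of the proof asserts $C \cap \rmbdry B(x,r) \neq \emptyset$ a little hastily.

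The genuine gap is in (F), and you have already located it yourself: building the tripod from the curve $\Gamma_\rho$ whose endpoints $\{y_0,y_1\}$ lie on $\rmbdry B(x,\rho)$ breaks down when the first hit $\tilde{x}$ of your branch curve with $\Gamma_\rho$ is one of those endpoints, because one of the two halves of $\Gamma_\rho$ at $\tilde{x}$ then degenerates to a point, leaving only two nontrivial arcs. You name two possible repairs but carry out neither, so the second alternative of (F) is not established. The paper's fix is exactly your first suggestion, and the reason it closes the hole is worth spelling out. Work with the curve $\Gamma$ that (E) produces at the outer scale $r$, with endpoints $x_0,x_1 \in \rmbdry B(x,r)$ and $\mathring{\Gamma} \subset \rmint B(x,r)$. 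Choose $\rho \in ((1-\veps)r,r)$ so close to $r$ that, besides $\rmcard(C\cap\rmbdry B(x,\rho))=2$, also $\Gamma \cap \rmint B(x,\rho)\neq\emptyset$; this is possible because $\mathring{\Gamma}$ lies strictly inside $B(x,r)$. Since both endpoints of $\Gamma$ lie outside $B(x,\rho)$ but $\Gamma$ enters it, one gets $\rmcard(\Gamma\cap\rmbdry B(x,\rho))\geq 2$, and $\Gamma\subset C$ forces $\Gamma\cap\rmbdry B(x,\rho)=C\cap\rmbdry B(x,\rho)=\{y_0,y_1\}$. Now pick $y\in(C\cap B(x,\rho))\setminus\Gamma$, a curve $\Gamma'\subset C$ from $y$ to $y_0$, and let $\tilde{x}$ be its first point on $\Gamma$. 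Before $\Gamma'$ can leave $B(x,\rho)$ it must cross $\rmbdry B(x,\rho)$ at $y_0$ or $y_1$, both of which lie on $\Gamma$; hence $\tilde{x}\in B(x,\rho)\subset\rmint B(x,r)$, and in particular $\tilde{x}\neq x_0,x_1$. So $\tilde{x}$ is automatically an interior point of $\Gamma$, even when it equals $y_0$ or $y_1$, and splitting $\Gamma$ at $\tilde{x}$ always yields two nontrivial arcs; the portion of $\Gamma'$ from $y$ to $\tilde{x}$ gives the third. Your version fails precisely because the curve you split has its endpoints on $\rmbdry B(x,\rho)$, the very circle on which the degenerate branch point can sit.
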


\begin{Remark}
Several comments are in order.
\begin{enumerate}
\item[(A)] The ``temporary'' conclusion (E) does not assert that $C \cap B(x,r)$ {\em is} a Lipschitz curve $\Gamma$, but merely that it contains such $\Gamma$ whose endpoints are on $\rmbdry B(x,r)$; in particular it is not claimed that $x \in \Gamma$.
\item[(B)] The first alternative of conclusion (F), however, states that $C \cap B(x,\rho)$ {\em is} a Lipschitz curve $\Gamma$, and that $C \cap \rmbdry B(x,\rho)$ consists of the endpoints of $\Gamma$, for $\rho$ close to $r$.
\item[(C)] The function $\rho \mapsto \rmcard(C \cap B(x,\rho))$ is $\calL^1$ measurable on $\R^+$, recall \ref{eilenberg}, and hence approximately continuous $\calL^1$ almost everywhere, see \cite[2.9.12 and 2.9.13]{GMT}.
\end{enumerate}
\end{Remark}

\begin{proof}[Proof of Theorem \ref{local.topology}]
We abbreviate $B = B(x,r)$ and we start by proving (D). Since $x \in C \cap B$ and $r < \rmdiam C$ we infer that $C \cap \rmbdry B$ is not empty. Assuming $C \cap \rmbdry B$ is a singleton, we infer from \ref{prop.comp} (applied with $x_0=x_1$) that $C' = C \setminus B$ is a competitor for $C$ in $B$. Now since $C$ is almost minimizing we would have $\calH^1(C \cap B) \leq (1 + \xi(r)) \calH^1(C' \cap B) = 0$, in contradiction with \ref{Ahlfors}.
\par 
We now turn to proving (E). Let $C \cap \rmbdry B = \{x_0,x_1\}$.
We will show that $C$ contains a Lipschitz curve $\Gamma$ with endpoints $x_0$ and $x_1$, whose interior $\mathring{\Gamma} = \Gamma \setminus \{x_0,x_1\}$ is contained in $\rmint B$. Recalling \ref{prelim.curves} we infer that there exists a Lipschitz curve $\Gamma \subset C$ with endpoints $x_0$ and $x_1$. If $\mathring{\Gamma} \cap \rmint B \neq \emptyset$ then $\mathring{\Gamma} \subset \rmint B$ for otherwise $\rmcard (C \cap \rmbdry B) \geq 3$, a contradiction. Thus (E)  will be established if we rule out the case $\mathring{\Gamma} \subset X \setminus B$. 

\begin{center}
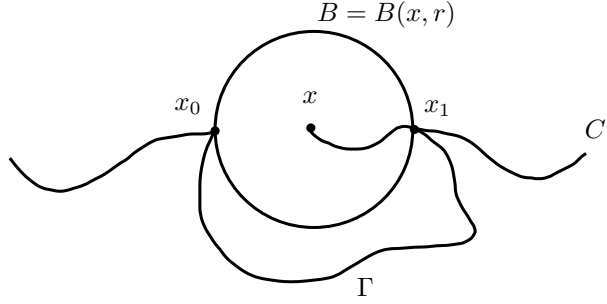
\begin{figure}[!h]
\scalebox{1} 
{
\begin{pspicture}(0,-2.0)(8.04,2.0)
\pscircle[linewidth=0.04,dimen=outer](4.0,0.28){1.32}
\psdots[dotsize=0.12](3.96,0.3)
\usefont{T1}{ptm}{m}{n}
\rput(4.96,1.805){$B=B(x,r)$}
\usefont{T1}{ptm}{m}{n}
\rput(3.95,0.705){$x$}
\usefont{T1}{ptm}{m}{n}
\rput(2.35,0.585){$x_0$}
\usefont{T1}{ptm}{m}{n}
\rput(5.63,0.565){$x_1$}
\psdots[dotsize=0.12](5.32,0.28)
\usefont{T1}{ptm}{m}{n}
\rput(7.7,0.285){$C$}
\usefont{T1}{ptm}{m}{n}
\rput(4.68,-1.815){$\Gamma$}
\psdots[dotsize=0.12](2.7,0.26)
\pscustom[linewidth=0.04]
{
\newpath
\moveto(0.0,-0.1)
\lineto(0.03,-0.14)
\curveto(0.045,-0.16)(0.09,-0.215)(0.12,-0.25)
\curveto(0.15,-0.285)(0.21,-0.345)(0.24,-0.37)
\curveto(0.27,-0.395)(0.33,-0.44)(0.36,-0.46)
\curveto(0.39,-0.48)(0.45,-0.505)(0.48,-0.51)
\curveto(0.51,-0.515)(0.56,-0.525)(0.58,-0.53)
\curveto(0.6,-0.535)(0.645,-0.54)(0.67,-0.54)
\curveto(0.695,-0.54)(0.75,-0.53)(0.78,-0.52)
\curveto(0.81,-0.51)(0.875,-0.49)(0.91,-0.48)
\curveto(0.945,-0.47)(1.005,-0.445)(1.03,-0.43)
\curveto(1.055,-0.415)(1.1,-0.39)(1.12,-0.38)
\curveto(1.14,-0.37)(1.18,-0.35)(1.2,-0.34)
\curveto(1.22,-0.33)(1.275,-0.295)(1.31,-0.27)
\curveto(1.345,-0.245)(1.4,-0.205)(1.42,-0.19)
\curveto(1.44,-0.175)(1.48,-0.15)(1.5,-0.14)
\curveto(1.52,-0.13)(1.56,-0.1)(1.58,-0.08)
\curveto(1.6,-0.06)(1.65,-0.02)(1.68,0.0)
\curveto(1.71,0.02)(1.78,0.065)(1.82,0.09)
\curveto(1.86,0.115)(1.925,0.145)(1.95,0.15)
\curveto(1.975,0.155)(2.035,0.165)(2.07,0.17)
\curveto(2.105,0.175)(2.17,0.185)(2.2,0.19)
\curveto(2.23,0.195)(2.285,0.205)(2.31,0.21)
\curveto(2.335,0.215)(2.39,0.22)(2.42,0.22)
\curveto(2.45,0.22)(2.505,0.22)(2.53,0.22)
\curveto(2.555,0.22)(2.6,0.225)(2.62,0.23)
\curveto(2.64,0.235)(2.665,0.24)(2.67,0.24)
\curveto(2.675,0.24)(2.67,0.215)(2.66,0.19)
\curveto(2.65,0.165)(2.625,0.11)(2.61,0.08)
\curveto(2.595,0.05)(2.57,-0.02)(2.56,-0.06)
\curveto(2.55,-0.1)(2.535,-0.165)(2.53,-0.19)
\curveto(2.525,-0.215)(2.515,-0.275)(2.51,-0.31)
\curveto(2.505,-0.345)(2.5,-0.42)(2.5,-0.46)
\curveto(2.5,-0.5)(2.5,-0.58)(2.5,-0.62)
\curveto(2.5,-0.66)(2.5,-0.735)(2.5,-0.77)
\curveto(2.5,-0.805)(2.515,-0.885)(2.53,-0.93)
\curveto(2.545,-0.975)(2.58,-1.05)(2.6,-1.08)
\curveto(2.62,-1.11)(2.655,-1.165)(2.67,-1.19)
\curveto(2.685,-1.215)(2.715,-1.255)(2.73,-1.27)
\curveto(2.745,-1.285)(2.78,-1.33)(2.8,-1.36)
\curveto(2.82,-1.39)(2.865,-1.44)(2.89,-1.46)
\curveto(2.915,-1.48)(2.965,-1.52)(2.99,-1.54)
\curveto(3.015,-1.56)(3.075,-1.6)(3.11,-1.62)
\curveto(3.145,-1.64)(3.215,-1.67)(3.25,-1.68)
\curveto(3.285,-1.69)(3.355,-1.705)(3.39,-1.71)
\curveto(3.425,-1.715)(3.495,-1.725)(3.53,-1.73)
\curveto(3.565,-1.735)(3.65,-1.74)(3.7,-1.74)
\curveto(3.75,-1.74)(3.845,-1.735)(3.89,-1.73)
\curveto(3.935,-1.725)(4.025,-1.715)(4.07,-1.71)
\curveto(4.115,-1.705)(4.195,-1.69)(4.23,-1.68)
\curveto(4.265,-1.67)(4.33,-1.64)(4.36,-1.62)
\curveto(4.39,-1.6)(4.455,-1.56)(4.49,-1.54)
\curveto(4.525,-1.52)(4.6,-1.475)(4.64,-1.45)
\curveto(4.68,-1.425)(4.75,-1.385)(4.78,-1.37)
\curveto(4.81,-1.355)(4.875,-1.33)(4.91,-1.32)
\curveto(4.945,-1.31)(5.015,-1.3)(5.05,-1.3)
\curveto(5.085,-1.3)(5.17,-1.295)(5.22,-1.29)
\curveto(5.27,-1.285)(5.36,-1.28)(5.4,-1.28)
\curveto(5.44,-1.28)(5.515,-1.275)(5.55,-1.27)
\curveto(5.585,-1.265)(5.65,-1.26)(5.68,-1.26)
\curveto(5.71,-1.26)(5.775,-1.255)(5.81,-1.25)
\curveto(5.845,-1.245)(5.91,-1.235)(5.94,-1.23)
\curveto(5.97,-1.225)(6.03,-1.195)(6.06,-1.17)
\curveto(6.09,-1.145)(6.12,-1.095)(6.12,-1.07)
\curveto(6.12,-1.045)(6.105,-1.0)(6.09,-0.98)
\curveto(6.075,-0.96)(6.05,-0.92)(6.04,-0.9)
\curveto(6.03,-0.88)(6.005,-0.84)(5.99,-0.82)
\curveto(5.975,-0.8)(5.945,-0.75)(5.93,-0.72)
\curveto(5.915,-0.69)(5.89,-0.635)(5.88,-0.61)
\curveto(5.87,-0.585)(5.86,-0.535)(5.86,-0.51)
\curveto(5.86,-0.485)(5.855,-0.43)(5.85,-0.4)
\curveto(5.845,-0.37)(5.835,-0.31)(5.83,-0.28)
\curveto(5.825,-0.25)(5.81,-0.195)(5.8,-0.17)
\curveto(5.79,-0.145)(5.77,-0.1)(5.76,-0.08)
\curveto(5.75,-0.06)(5.73,-0.025)(5.72,-0.01)
\curveto(5.71,0.0050)(5.68,0.035)(5.66,0.05)
\curveto(5.64,0.065)(5.6,0.095)(5.58,0.11)
\curveto(5.56,0.125)(5.525,0.155)(5.51,0.17)
\curveto(5.495,0.185)(5.465,0.21)(5.45,0.22)
\curveto(5.435,0.23)(5.4,0.25)(5.38,0.26)
\curveto(5.36,0.27)(5.32,0.29)(5.3,0.3)
\curveto(5.28,0.31)(5.225,0.32)(5.19,0.32)
\curveto(5.155,0.32)(5.1,0.305)(5.08,0.29)
\curveto(5.06,0.275)(5.015,0.24)(4.99,0.22)
\curveto(4.965,0.2)(4.925,0.165)(4.91,0.15)
\curveto(4.895,0.135)(4.86,0.11)(4.84,0.1)
\curveto(4.82,0.09)(4.78,0.07)(4.76,0.06)
\curveto(4.74,0.05)(4.695,0.035)(4.67,0.03)
\curveto(4.645,0.025)(4.595,0.02)(4.57,0.02)
\curveto(4.545,0.02)(4.495,0.02)(4.47,0.02)
\curveto(4.445,0.02)(4.4,0.025)(4.38,0.03)
\curveto(4.36,0.035)(4.32,0.045)(4.3,0.05)
\curveto(4.28,0.055)(4.245,0.07)(4.23,0.08)
\curveto(4.215,0.09)(4.18,0.105)(4.16,0.11)
\curveto(4.14,0.115)(4.105,0.13)(4.09,0.14)
\curveto(4.075,0.15)(4.04,0.175)(4.02,0.19)
\curveto(4.0,0.205)(3.97,0.235)(3.96,0.25)
\curveto(3.95,0.265)(3.935,0.285)(3.92,0.3)
}
\pscustom[linewidth=0.04]
{
\newpath
\moveto(5.3,0.3)
\lineto(5.35,0.29)
\curveto(5.375,0.285)(5.425,0.275)(5.45,0.27)
\curveto(5.475,0.265)(5.525,0.26)(5.55,0.26)
\curveto(5.575,0.26)(5.62,0.255)(5.64,0.25)
\curveto(5.66,0.245)(5.7,0.235)(5.72,0.23)
\curveto(5.74,0.225)(5.79,0.215)(5.82,0.21)
\curveto(5.85,0.205)(5.905,0.19)(5.93,0.18)
\curveto(5.955,0.17)(6.005,0.145)(6.03,0.13)
\curveto(6.055,0.115)(6.105,0.08)(6.13,0.06)
\curveto(6.155,0.04)(6.2,0.0)(6.22,-0.02)
\curveto(6.24,-0.04)(6.285,-0.08)(6.31,-0.1)
\curveto(6.335,-0.12)(6.385,-0.16)(6.41,-0.18)
\curveto(6.435,-0.2)(6.51,-0.245)(6.56,-0.27)
\curveto(6.61,-0.295)(6.685,-0.33)(6.71,-0.34)
\curveto(6.735,-0.35)(6.785,-0.36)(6.81,-0.36)
\curveto(6.835,-0.36)(6.88,-0.365)(6.9,-0.37)
\curveto(6.92,-0.375)(6.975,-0.375)(7.01,-0.37)
\curveto(7.045,-0.365)(7.105,-0.345)(7.13,-0.33)
\curveto(7.155,-0.315)(7.2,-0.29)(7.22,-0.28)
\curveto(7.24,-0.27)(7.28,-0.25)(7.3,-0.24)
\curveto(7.32,-0.23)(7.36,-0.21)(7.38,-0.2)
\curveto(7.4,-0.19)(7.435,-0.165)(7.45,-0.15)
\curveto(7.465,-0.135)(7.495,-0.105)(7.51,-0.09)
\curveto(7.525,-0.075)(7.55,-0.055)(7.58,-0.04)
}
\end{pspicture} 
}
\caption{A case to rule out in proving (E).}
\end{figure}
\end{center}

Assuming if possible that $\mathring{\Gamma} \subset X \setminus B$ we verify that
\begin{equation*}
C' = (C \setminus B) \cup \{ x_0,x_1 \}
\end{equation*}
is a competitor. It is indeed easy to check that $C'$ is compact and we now show that it is connected. Given $a,b \in C'$ we will find a curve $\Gamma' \subset C'$ with endpoints $a$ and $b$.
According to \ref{prelim.curves}, there exists a curve $\Gamma'' \subset C$ with endpoints $a$ and $b$. If $\Gamma'' \cap \rmint B = \emptyset$ we let $\Gamma' = \Gamma''$ and we are done. Otherwise $\Gamma''$ contains one of $x_0$ and $x_1$, and hence also both. We denote by $\Gamma''_0$ the subcurve of $\Gamma''$ with endpoints $a$ and (say) $x_0$, by $\Gamma''_1$ the subcurve of $\Gamma''$ with endpoints $x_1$ and $b$, and we put $\Gamma''_B = \Gamma'' \cap B$. Thus $\Gamma'' = \Gamma''_0 \cup \Gamma''_B \cup \Gamma''_1$ and we define a new curve $\Gamma' \subset C'$ corresponding to $\Gamma''_0 \cup \Gamma \cup \Gamma''_1$. This completes the proof that $C'$ is a competitor. Now since $\calH^1(C' \cap B) = 0$ and $C$ is almost minimizing, we infer that $\calH^1(C \cap B) = 0$. Together with hypothesis (C), this contradicts Lemma \ref{Ahlfors}. Thus conclusion (E)  is established.
\par 
It remains to prove (F). Let $B_r = B(x,r)$, $\{x_0,x_1\} = C \cap \rmbdry B(x,r)$, and let $\Gamma \subset C$ denote a Lipschitz curve with endpoints $x_0$ and $x_1$, and $\mathring{\Gamma} \subset \rmint B_r$, whose existence results from conclusion (E). The $\calL^1$ approximate continuity of $\rho \mapsto \rmcard(C \cap \rmbdry B(x,\rho))$ at $\rho=r$ implies the existence of an increasing sequence $\{ \rho_k \}$ with limit $r$ and such that $\rmcard (C \cap \rmbdry B(x,\rho_k)) =2$ for every $k$. Choose $\rho = \rho_k$ with $k$ large enough for $(1-\veps)r < \rho < r$. Taking $k$ even larger we may assume that $\rmcard(\Gamma \cap B(x,\rho))\geq 2$ since $\mathring{\Gamma} \subset \rmint B_r$ and $x_0, x_1\in \rmbdry B_r$ are the endpoints of $\Gamma$. The curve $\Gamma$ being a subset of $C$ we must have $\rmcard(\Gamma \cap B(x,\rho))= 2$.
Abbreviate $B_\rho = B(x,\rho)$. If $C \cap B_\rho = \Gamma \cap B_\rho$ then the first branch of the dichotomy occurs and the proof is finished. Otherwise let $\{y_0,y_1\} = C \cap \rmbdry B_\rho= \Gamma \cap \rmbdry B_\rho$. 
Choose $y \in (C \cap B_\rho) \setminus \Gamma$. Thus $C$ contains a curve $\Gamma'$ with endpoints $y$ and $y_0$. Let $y'$ be the first point on $\Gamma'$ (starting from $y$) that belongs to $\Gamma$. Clearly $y' \in B_\rho$. Whether $y' \in \rmint B_\rho$ or $y' \in \{y_0,y_1\}$, one checks that $C$ contains three nontrivial Lipschitz curves $\Gamma_1,\Gamma_2,\Gamma_3$ whose intersection is $\{y'\}$, two of which are subcurves of $\Gamma$, the other one being a subcurve of $\Gamma'$.
\end{proof}

We now state the basic discrepancy regarding the density of points of almost minimizing sets.

\begin{Theorem}
\label{density.first}
Assume that
\begin{enumerate}
\item[(A)] $C \subset X$ is compact and connected, $U \subset X$ is open, $\xi$ is a gauge, $r_0 > 0$, $x \in C \cap U$;
\item[(B)] $C$ is $(\xi,r_0)$ almost minimizing in $U$.
\end{enumerate}
The following hold.
\begin{enumerate}
\item[(C)] $\Theta^1_*(\calH^1 \hel C ,x) \geq 1$;
\item[(D)] One of the following occurs: {\bf either}
\begin{equation*}
\rmap\lim_{r \to 0^+} \frac{\calH^1(C \cap B(x,r))}{2r} = 1 \,,
\end{equation*}
{\bf or}
\begin{equation*}
\Theta^1_*(\calH^1 \hel C,x) \geq 3/2 \,.
\end{equation*}
\end{enumerate}
\end{Theorem}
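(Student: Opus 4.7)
For (C): I apply Eilenberg's inequality (paragraph \ref{eilenberg}) to the 1-Lipschitz function $f(y) = \|y - x\|$ on $A = C \cap B(x, r)$. Since $f^{-1}\{s\} \cap C = C \cap \rmbdry B(x, s)$ and Theorem \ref{local.topology}(D) yields $\rmcard(C \cap \rmbdry B(x, s)) \geq 2$ for every sufficiently small $s > 0$, Eilenberg's inequality gives
\begin{equation*}
2r \leq \int_0^r \rmcard(C \cap \rmbdry B(x, s))\, d\calL^1(s) \leq \calH^1(C \cap B(x, r))
\end{equation*}
for all small $r$, whence $\Theta^1_*(\calH^1 \hel C, x) \geq 1$.

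For (D), write $N_r = \rmcard(C \cap \rmbdry B(x, r))$, $S = \{r > 0 : N_r = 2\}$, and $D(r) = \calH^1(C \cap B(x, r))/(2r)$. The \emph{upper bound} $D(r) \leq 1 + \xi(r)$ holds for every $r \in S$ sufficiently small: Proposition \ref{prop.comp} applied with $N = 2$ and $x_0 = x_1$ produces the competitor $C' = (C \setminus B(x, r)) \cup \lseg x_1, x_2 \rseg$ whose trace in $B(x,r)$ has length $\|x_1 - x_2\| \leq 2r$, and the almost-minimizing inequality yields the bound. Combined with the Eilenberg lower bound $\calH^1(C \cap B(x, r)) \geq \int_0^r N_s \, d\calL^1(s)$, a log-derivative argument (this is the content of the monotonicity Theorem \ref{monotonicity}) shows that $r \mapsto \exp(\zeta(r)) D(r)$ is nondecreasing in $r$, so that the ordinary limit $\Theta := \lim_{r \to 0^+} D(r)$ exists and coincides with both $\Theta^1_*(\calH^1 \hel C, x)$ and $\rmap\lim_{r \to 0^+} D(r)$.

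The proof of (D) then concludes with a clean topological dichotomy according to whether $0$ is an accumulation point of $S$. In the first case, pick a sequence $r_n \in S$ with $r_n \to 0$; the competitor bound above gives $D(r_n) \leq 1 + \xi(r_n) \to 1$, so $\Theta = \lim_n D(r_n) = 1$, and the approximate limit in (D) equals $1$. In the second case, there is $\delta > 0$ with $S \cap (0, \delta) = \emptyset$; taking $\delta$ still smaller so that Theorem \ref{local.topology}(D) applies, we have $N_s \geq 2$ for every $s \in (0, \delta)$, which forces $N_s \geq 3$ on $(0, \delta)$. Eilenberg then yields $\calH^1(C \cap B(x, r)) \geq 3r$ for $r \in (0, \delta)$, i.e., $D(r) \geq 3/2$, and therefore $\Theta^1_*(\calH^1 \hel C, x) \geq 3/2$.

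The main subtlety is the existence of $\lim_{r \to 0^+} D(r)$: without it, an almost-minimizing $D$ could \emph{a priori} oscillate between values close to $1$ (on $S$) and higher values (on $\{N_r \geq 3\}$), and the approximate-limit statement would require delicate measure-theoretic bookkeeping. Once monotonicity is in hand, however, the dichotomy becomes purely a matter of whether the scales at which $C$ locally looks like two arcs crossing $\rmbdry B(x, r)$ accumulate at $x$ or not.
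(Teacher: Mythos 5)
Your proof of (C) is correct and matches the paper exactly. Your proof of (D), however, has a genuine gap: you invoke the monotonicity Theorem \ref{monotonicity} to conclude that $r \mapsto \exp(\zeta(r)) D(r)$ is nondecreasing and hence that $\lim_{r\to 0^+} D(r)$ exists as an ordinary limit. But Theorem \ref{monotonicity} requires $\xi$ to be a \emph{Dini} gauge (hypothesis (B) there), whereas Theorem \ref{density.first} only assumes $\xi$ is a gauge, with no integrability condition on $\xi(\rho)/\rho$. The paper is deliberately stated in this weaker generality --- which is precisely why conclusion (D) is phrased with an approximate limit rather than a true limit. Your own closing paragraph correctly identifies the danger (an almost-minimizing $D(r)$ could oscillate between $\approx 1$ on the set $S$ where the sphere is hit twice, and $\geq 3/2$ on its complement), and then dodges it by appealing to a tool that is not available under the stated hypotheses.

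What the paper actually does in place of monotonicity is the measure-theoretic bookkeeping you flagged as unpleasant. Setting $G=\{r>0 : N_r = 2\}$ and $\vartheta(r)=r^{-1}\calL^1(G\cap[0,r])$, the competitor bound gives $D(r)\leq 1+\xi(r)$ for $r\in G$, and combining this with the Eilenberg lower bound $2\calL^1(G\cap[0,r])+3\calL^1([0,r]\setminus G)\leq \calH^1(C\cap B(x,r))$ yields $\vartheta(r)\geq 1-2\xi(r)$ for $r \in G$. If $G$ accumulates at $0$, this self-improves: starting from any $r\in G$ one produces $\hat r\in G$ with $r/4\leq\hat r\leq 3r/4$, hence a full sequence $r_k\in G$ with bounded ratios $1\leq r_k/r_{k+1}\leq 4$, and then $\calL^1([0,r]\setminus G)/r \leq 8\xi(r_k)\to 0$ for $r_{k+1}\leq r\leq r_k$. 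This shows $G$ has \emph{full} density $1$ at $0$ --- not merely that $0$ is an accumulation point of $G$, which is the weaker fact your dichotomy uses. Full density of $G$, together with $D(r)\leq 1+\xi(r)$ on $G$ and conclusion (C), is exactly what the approximate-limit statement in (D) requires. Your second branch (if $G$ does not accumulate, then $N_s\geq 3$ eventually, so $D(r)\geq 3/2$) is correct and coincides with the paper's.

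To salvage your argument without strengthening the hypotheses, you would need to replace the appeal to Theorem \ref{monotonicity} with the $\vartheta$-iteration above (or an equivalent density-of-$G$ argument): the approximate limit in (D) is a statement about a set of full Lebesgue density of radii, not about every sequence of radii, so mere accumulation of $S$ at $0$ is insufficient.
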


\begin{proof}
Given $r \geq \rho > 0$ sufficiently small, it follows from Theorem \ref{local.topology} (D) that $\rmcard (C \cap \rmbdry B(x,\rho)) \geq 2$. Thus
\begin{equation*}
2 r \leq \int_0^r \rmcard (C \cap \rmbdry B(x,\rho)) d\calL^1(\rho) \leq \calH^1( C \cap B(x,r))
\end{equation*}
according to Eilenberg's inequality, recall \ref{eilenberg}. Conclusion (C) readily follows.
\par 
In view of (C), conclusion (D) will be established as soon as we show that the alternative holds with the first condition replaced by the formally weaker
\begin{equation}
\label{eq.14-bis}
\rmap\limsup_{r \to 0^+} \frac{\calH^1(C \cap B(x,r))}{2r} \leq 1 \,.
\end{equation}
We define an $\calL^1$ measurable set $G = \R \cap \{ r  > 0 : \rmcard (C \cap \rmbdry B(x,r)) = 2 \}$, and $\vartheta(r) = r^{-1} \calL^1( G \cap [0,r])$. We choose $0 < r'_0 \leq r_0$ small enough for $B(x,r'_0) \subset U$ and $\xi(r_0') < 1/4$. We claim that if $r \in G \cap [0,r'_0]$ then
\begin{equation}
\label{eq.12}
\frac{\calH^1(C \cap B(x,r))}{2r} \leq 1 + \xi(r)
\end{equation}
and
\begin{equation}
\label{eq.13}
\vartheta(r) \geq 1 - 2 \xi(r) \,.
\end{equation}
In order to prove \eqref{eq.12} we recall that our assumption $\rmcard(C \cap \rmbdry B(x,r)) = 2$ implies $C' = (C \setminus B(x,r)) \cup ( \blseg x_0,x \brseg] \cup \blseg x,x_1 \brseg)$ (where $\{x_0,x_1\} = C \cap \rmbdry B(x,r)$) is a competitor, according to \ref{prop.comp}. The desired inequality thus ensues from the almost minimizing property of $C$. In order to establish \eqref{eq.13} we refer to Theorem \ref{local.topology}(D), to Eilenberg's inequality \ref{eilenberg}, and to \eqref{eq.12}:
\begin{multline*}
2 \calL^1( G \cap [0,r] ) + 3 \calL^1( [0,r] \setminus G) \\ \leq \int_0^r \rmcard ( C \cap \rmbdry B(x,\rho)) d\calL^1(\rho) \\ \leq \calH^1(C \cap B(x,r)) \leq (1 + \xi(r)) 2 r \,.
\end{multline*}
In other words,
\begin{equation*}
2 r \vartheta(r) + 3r (1 - \vartheta(r)) \leq (1 + \xi(r)) 2 r \,,
\end{equation*}
from which \eqref{eq.13} readily follows. Still assuming that $r \in G \cap [0,r'_0]$, the bound $\xi(r) < 1/4$ together with \eqref{eq.13} shows there exists another $\hat{r} \in G$ with $r/4 \leq \hat{r} \leq 3 r / 4$. Iterating this observation, we infer from the hypothesis $G \cap [0,r'_0] \neq \emptyset$ the existence of a sequence $\{ r_k \}$ in $G\cap[0,r'_0]$ such that $\lim_k r_k = 0$ and $1\leq r_k / r_{k+1} \leq 4$. Now if $r_{k+1} \leq r \leq r_{k}$ then
\begin{equation*}
\frac{\calL^1( [0,r] \setminus G)}{r} \leq 
\frac{\calL^1( [0,r_k] \setminus G)}{r_{k+1}} \leq 
\frac{ 4 \calL^1( [0,r_k] \setminus G)}{r_k} \leq
8 \xi(r_k) \,,
\end{equation*}
according to \eqref{eq.13}. Therefore
\begin{equation*}
\lim_{r \to 0^+} \frac{\calL^1( [0,r] \cap G)}{r} = 1 \,.
\end{equation*}
Furthermore it follows from \eqref{eq.12} that
\begin{equation*}
\limsup_{\substack{r \to 0^+ \\ r \in G}} \frac{\calH^1(C \cap B(x,r))}{2r} \leq 1 \,.
\end{equation*}
It is now clear that if $G \cap [0,r'_0] \neq \emptyset$ then \eqref{eq.14-bis} holds. If instead $G \cap [0,r'_0] = \emptyset$ then Eilenberg's inequality implies that for each $0 < r \leq r'_0$ one has
\begin{equation*}
3 r \leq \int_0^r \rmcard(C \cap B(x,\rho)) d\calL^1(\rho) \leq \calH^1(C \cap B(x,r)) \,.
\end{equation*}
In particular $\Theta^1_*(\calH^1 \hel C,x) \geq 3/2$.
\end{proof}

In the remaining part of this section we will obtain better information under the assumption that the gauge $\xi$ is Dini.

\begin{Theorem}[Almost monotonicity]
\label{monotonicity}
Assume that:
\begin{enumerate}
\item[(A)] $C \subset X$ is compact and connected, $U \subset X$ is open, $r_0 > 0$;
\item[(B)] $\xi$ is a Dini gauge with mean slope $\zeta$;
\item[(C)] $C$ is $(\xi,r_0)$ almost minimizing in $U$;
\end{enumerate}
It follows that for every $x \in C \cap U$ the function
\begin{equation*}
(0, \min\{r_0,\rmdist(x,\rmbdry U)\}) \to \R^+ : r \mapsto \exp[\zeta(r)]\frac{\calH^1(C \cap B(x,r))}{2r}
\end{equation*}
is nondecreasing.
\end{Theorem}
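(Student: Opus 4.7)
The plan is to derive, for $f(r) := \calH^1(C \cap B(x,r))$, an integral inequality which, after a Gronwall integration, yields the claimed monotonicity. The two ingredients are (a) an upper bound $f(s) \leq (1+\xi(s))\, N(s)\, s$ obtained by comparing $C$ with a cone competitor of apex $x$, and (b) a lower bound $f(\rho_2)-f(\rho_1) \geq \int_{\rho_1}^{\rho_2} N(s)\, d\calL^1(s)$ coming from Eilenberg's inequality, where $N(s) := \rmcard(C \cap \rmbdry B(x,s))$.

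\textbf{The two estimates.} Fix $x \in C \cap U$ and set $R := \min\{r_0, \rmdist(x, X \setminus U)\}$. By Eilenberg's inequality \ref{eilenberg} applied to the $1$-Lipschitz function $y \mapsto \|y-x\|$, one has $\int_0^{R'} N(s)\, d\calL^1(s) \leq \calH^1(C \cap B(x, R')) < \infty$ for every $R' < R$, so $N(s) < \infty$ for $\calL^1$-a.e.\ $s \in (0, R)$. At any such $s$, listing $C \cap \rmbdry B(x,s) = \{x_1, \ldots, x_{N(s)}\}$, Proposition \ref{prop.comp} applied with $x_0 := x$ provides the competitor
\begin{equation*}
C' := (C \setminus B(x,s)) \cup \bigcup_{n=1}^{N(s)} \blseg x, x_n \brseg,
\end{equation*}
which satisfies $\calH^1(C' \cap B(x,s)) \leq N(s)\, s$ (each segment has length $s$). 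The almost-minimizing property then gives $f(s) \leq (1+\xi(s))\, N(s)\, s$ for $\calL^1$-a.e.\ $s$. Similarly, for $0 < \rho_1 < \rho_2 < R$, Eilenberg's inequality applied to $C \cap (B(x,\rho_2) \setminus U(x,\rho_1))$ yields $\int_{\rho_1}^{\rho_2} N(s)\, d\calL^1(s) \leq f(\rho_2) - f(\rho_1)$. Combining the two estimates and using $\frac{1}{1+\xi} \geq 1-\xi$,
\begin{equation*}
f(\rho_2) - f(\rho_1) \;\geq\; \int_{\rho_1}^{\rho_2} \frac{f(s)}{(1+\xi(s))\,s}\, d\calL^1(s) \;\geq\; \int_{\rho_1}^{\rho_2} \frac{(1-\xi(s))\, f(s)}{s}\, d\calL^1(s).
\end{equation*}

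\textbf{Gronwall integration and conclusion.} A Gronwall-type comparison converts this integral inequality into the pointwise bound
\begin{equation*}
\log\frac{f(\rho_2)}{f(\rho_1)} \;\geq\; \int_{\rho_1}^{\rho_2} \frac{1-\xi(s)}{s}\, d\calL^1(s) \;=\; \log\frac{\rho_2}{\rho_1} - \bigl(\zeta(\rho_2) - \zeta(\rho_1)\bigr),
\end{equation*}
which rearranges to $\exp[\zeta(\rho_2)]\, f(\rho_2)/(2\rho_2) \geq \exp[\zeta(\rho_1)]\, f(\rho_1)/(2\rho_1)$. The main technical obstacle is this last step: $f$ is only known to be nondecreasing (hence BV and differentiable $\calL^1$-a.e., but possibly containing jumps and a singular part), so one cannot simply integrate the differential inequality $f'/f \geq 1/((1+\xi)r)$. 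I would handle this by first restricting to the cofinite subset $D \subset (0, R)$ of points where $f$ is continuous (the complement is countable since $\sum_r \calH^1(C \cap \rmbdry B(x,r)) \leq f(R) < \infty$), running the classical Gronwall inequality on $D$ to get the monotonicity of $\phi(r) := \exp[\zeta(r)]\, f(r)/(2r)$ on this dense set, and then extending to all of $(0, R)$ via right-continuity of $f$ (hence of $\phi$) together with continuity of $\zeta$.
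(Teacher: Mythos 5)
Your two ingredients --- the cone competitor of apex $x$ from Proposition \ref{prop.comp} combined with the almost-minimality, and Eilenberg's inequality from \ref{eilenberg} --- are exactly the ones the paper uses, so the strategy is fundamentally the same. The gap is in the Gronwall step, and in particular in the fix you propose for the lack of absolute continuity of $f$. Deleting the (at most countably many) jump discontinuities of $f$ does not address the real obstruction: a nondecreasing function may still carry a singular continuous (Cantor-type) part on $(0,R)$ after its jumps are removed, and ``running the classical Gronwall inequality on $D$'' --- a dense co-countable set rather than an interval --- is not a well-defined operation, since the classical argument differentiates $f e^{-G}$, which requires absolute continuity on the interval, not mere continuity at two endpoints.

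What actually closes the argument is more elementary and does not single out any subset $D$. The paper converts the two estimates into the \emph{differential} inequality rather than the integral one: since $\vphi$ is nondecreasing, it is differentiable a.e., Eilenberg gives $N(\rho)\le\vphi'(\rho)$ at a.e.\ $\rho$, and the cone competitor gives $\vphi(\rho)\le(1+\xi(\rho))N(\rho)\rho$, whence $\vphi'(\rho)/\vphi(\rho)\ge(1-\xi(\rho))/\rho$ a.e. One then invokes the fundamental-theorem-of-calculus \emph{inequality} valid for any nondecreasing positive function $g$, namely $g(b)-g(a)\ge\int_a^b g'(\rho)\,d\calL^1(\rho)$, applied to $g=\log\vphi$; since the comparison function $\rho\mapsto\log\rho-\zeta(\rho)$ \emph{is} absolutely continuous, its increment equals the integral of its derivative with equality, and the desired monotonicity of $\exp[\zeta(r)]\vphi(r)/(2r)$ drops out. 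The singular and jump parts of $\vphi$ only help, since they contribute a nonnegative extra increment. If you prefer to keep your integral inequality $f(\rho_2)-f(\rho_1)\ge\int_{\rho_1}^{\rho_2} f(s)/((1+\xi(s))s)\,d\calL^1(s)$, the clean route is a partition/telescoping argument: with $\widetilde G'(s)=1/((1+\xi(s))s)\ge0$, the monotonicity of $f$ gives $f(s_{i+1})\ge f(s_i)\bigl(1+\widetilde G(s_{i+1})-\widetilde G(s_i)\bigr)$ on each subinterval; telescoping and letting the mesh tend to zero (using continuity of $\widetilde G$, which is absolutely continuous) yields $f(\rho_2)\ge f(\rho_1)\exp[\widetilde G(\rho_2)-\widetilde G(\rho_1)]$, again with no hypothesis on $f$ beyond monotonicity.
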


\begin{proof}
Fix $x \in C \cap U$ and let $r(x) := \rmdist(x , \rmbdry U)$. We define $N(\rho) = \rmcard(C \cap \rmbdry B(x,\rho)) \in \N \cup \{ \infty \}$ for $0 < \rho < r(x)$, and
\begin{equation*}
\vphi(r) = \calH^1(C \cap B(x,r)) \,,
\end{equation*}
for $0 < r < r(x)$. Notice that $\vphi>0$ according to \ref{Ahlfors}.

We infer from Eilenberg's inequality \ref{eilenberg} (applied to $A = C \cap (B(x,b) \setminus B(x,a))$ that
\begin{equation*}
\int_a^b N(\rho) d\calL^1(\rho) \leq \vphi(b) - \vphi(a)
\end{equation*}
for every $0 < a < b < r(x)$. In particular $N$ is almost everywhere finite and 
\begin{equation}
\label{eqq.2}
N(\rho) \leq \vphi'(\rho)
\end{equation}
at those $\rho$ which are Lebesgue points of $N$ and at which $\vphi$ is differentiable. Since $\vphi$ is nondecreasing this occurs almost everywhere. We next select $0 < \rho < r(x)$ such that $N(\rho) < \infty$ and we define
\begin{equation*}
C'=(C\setminus B(x,\rho)) \cup \left( \bigcup_{y\in C\cap \rmbdry B(x,\rho)}\blseg x,y \brseg \right) \,.
\end{equation*}
It follows from \ref{prop.comp} that $C'$ is a competitor for $C$ in $B(x,\rho)$. Assuming also that \eqref{eqq.2} holds, the almost minimizing property of $C$ yields
\begin{multline*}
\vphi(\rho) = \calH^1(C \cap \rmbdry B(x,\rho)) \leq (1 + \xi(\rho)) \calH^1(C' \cap B(x,\rho)) \\ = (1 + \xi(\rho)) N(\rho) \rho \leq (1 + \xi(\rho)) \vphi'(\rho) \rho \,.
\end{multline*}
It follows from the above that
\begin{equation*}
\frac{d}{d\rho} \log \vphi(\rho) = \frac{\vphi'(\rho)}{\vphi(\rho)} \geq \frac{1}{(1 + \xi(\rho))\rho} \geq \frac{1 - \xi(\rho)}{\rho}= \frac{d}{d\rho} \log \bigg( \rho \exp [ - \zeta(\rho) ] \bigg) \,.
\end{equation*}
Since this inequality occurs almost everywhere and $\vphi$ is nondecreasing, we infer upon integrating each member that
\begin{equation*}
\log \vphi(r_2) - \log \vphi(r_1) \geq \log \bigg( r_2 \exp[ - \zeta(r_2) ] \bigg) - \log \bigg( r_1 \exp[ - \zeta(r_1) ] \bigg) \,.
\end{equation*}
for every $0 < r_1 < r_2 < r(x)$. Our conclusion now easily follows.
\end{proof}

\begin{Corollary}
\label{density.dichotomy}
At each $x \in C \cap U$ the density
\begin{equation*}
\Theta^1(\calH^1 \hel C , x) = \lim_{r \to 0^+} \frac{\calH^1(C \cap B(x,r))}{2r}
\end{equation*}
exists, and either $\Theta^1(\calH^1 \hel C , x) = 1$ or $\Theta^1(\calH^1 \hel C , x ) \geq 3/2$.
\end{Corollary}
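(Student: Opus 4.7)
The plan is to derive the existence of the density from the almost monotonicity \ref{monotonicity}, and then combine it with the approximate-limit dichotomy in \ref{density.first}(D).

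First I would observe that since $\xi$ is Dini with mean slope $\zeta$, and $\zeta$ is itself a gauge, we have $\zeta(r) \to 0$ as $r \to 0^+$, hence $\exp[\zeta(r)] \to 1$. By Theorem \ref{monotonicity} the function
\begin{equation*}
r \mapsto \exp[\zeta(r)] \frac{\calH^1(C \cap B(x,r))}{2r}
\end{equation*}
is nondecreasing on a right-neighborhood of $0$, so its limit as $r \to 0^+$ exists in $[0,+\infty]$ (it equals its infimum). Dividing by the factor $\exp[\zeta(r)] \to 1$, the limit defining $\Theta^1(\calH^1 \hel C, x)$ exists in $[0,+\infty]$ as well. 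By Theorem \ref{density.first}(C) the lower density is already at least $1$, so in fact $\Theta^1(\calH^1 \hel C, x) \geq 1$ (and in particular it is bounded below away from $0$).

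Next I would invoke the dichotomy of Theorem \ref{density.first}(D). In the second alternative, $\Theta^1_*(\calH^1 \hel C, x) \geq 3/2$; since we have just shown the genuine limit exists, it coincides with the lower density and thus $\Theta^1(\calH^1 \hel C, x) \geq 3/2$. In the first alternative, we have
\begin{equation*}
\rmap\lim_{r \to 0^+} \frac{\calH^1(C \cap B(x,r))}{2r} = 1,
\end{equation*}
which means there is a measurable set $G \subset \R^+$ having density $1$ at $0$ and along which the usual ratio tends to $1$. In particular one can extract a sequence $r_j \to 0^+$ with $r_j \in G$ along which the ratio tends to $1$. Since the full limit already exists by the previous paragraph, it must equal this subsequential limit, giving $\Theta^1(\calH^1 \hel C, x) = 1$.

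There is essentially no obstacle: the only subtlety is to note that an approximate limit together with the existence of a genuine limit forces the genuine limit to equal the approximate one (via extraction of a sequence along the density-$1$ set), and that $\zeta$ being a gauge is what makes the monotone quantity and the unweighted ratio share the same limit.
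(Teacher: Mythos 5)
Your proof is correct and is exactly the argument the paper leaves implicit: Theorem \ref{monotonicity} yields existence of the limit (the nondecreasing weighted ratio divided by $\exp[\zeta(r)]\to 1$), and once the genuine limit exists, the two branches of Theorem \ref{density.first}(D) upgrade respectively to $\Theta^1=1$ and $\Theta^1\geq 3/2$. Your observation that the approximate limit must coincide with the genuine limit (via extraction along the density-one set) is the one small point that makes the deduction rigorous, and it is handled correctly.
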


\begin{Theorem}[Lipschitz regularity]
\label{lip.reg}
Assume that:
\begin{enumerate}
\item[(A)] $C \subset X$ is compact and connected, $U \subset X$ is open, $0 < r < r_0$, $x \in C$, $B(x,r) \subset U$, $0 < \tau \leq 1/6$;
\item[(B)] $\xi$ is a Dini gauge with mean slope $\zeta$;
\item[(C)] $C$ is $(\xi,r_0)$ almost minimizing in $U$;
\item[(D)] $\exp[\zeta(r)] \leq 1 + \tau/4$;
\item[(E)] $\xi(r) \leq \tau/4$;
\item[(F)] $\rmcard (C \cap \rmbdry B(x,r)) = 2$.
\end{enumerate}
It follows that there exists $\tau r /2 \leq \rho \leq \tau r$ and a Lipschitz curve $\Gamma$ such that $C \cap B(x,\rho) = \Gamma$ and $C \cap \rmbdry B(x,\rho)$ consists of the two endpoints of $\Gamma$.
\end{Theorem}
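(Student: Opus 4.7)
The plan is to combine the scale-$r$ competitor from Proposition \ref{prop.comp} with the monotonicity formula of Theorem \ref{monotonicity} to bound the density ratio $\calH^1(C \cap B(x,\rho))/(2\rho)$ uniformly for $\rho \leq r$, then to use Eilenberg's inequality \ref{eilenberg} to locate a radius $r^* \in (\tau r/2, \tau r]$ with $\rmcard(C \cap \rmbdry B(x, r^*)) = 2$ at which Theorem \ref{local.topology}(F) applies, and finally to exclude the tripod alternative via a density argument. Writing $C \cap \rmbdry B(x,r) = \{x_0, x_1\}$ and applying Proposition \ref{prop.comp} with the center $x$ in the role of $x_0$ there, the set $(C \setminus B(x,r)) \cup \blseg x,x_0 \brseg \cup \blseg x,x_1 \brseg$ is a competitor for $C$ in $B(x,r)$, so the almost-minimizing condition and hypothesis (E) yield $\calH^1(C \cap B(x,r)) \leq 2r(1+\xi(r)) \leq 2r(1+\tau/4)$. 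Combined with monotonicity and hypothesis (D), this gives, for every $0 < \rho \leq r$,
\begin{equation*}
\frac{\calH^1(C \cap B(x,\rho))}{2\rho} \leq \exp[\zeta(r)](1+\xi(r)) \leq (1+\tau/4)^2.
\end{equation*}

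Setting $N(\rho) = \rmcard(C \cap \rmbdry B(x,\rho))$, Theorem \ref{local.topology}(D) gives $N \geq 2$ on $(0, r)$ (indeed $\rmdiam C \geq r$ since $x_0 \in C$ with $\|x_0 - x\| = r$), and Eilenberg's inequality specialised to $B(x, \tau r)$ combined with the above at $\rho = \tau r$ yields
\begin{equation*}
\int_0^{\tau r} (N(\rho) - 2)\,d\calL^1(\rho) \leq 2\tau r \bigl((1+\tau/4)^2 - 1\bigr) \leq 2\tau^2 r.
\end{equation*}
Hence $\calL^1(\{\rho \in [\tau r/2, \tau r] : N(\rho) = 2\}) \geq \tau r/2 - 2\tau^2 r \geq \tau r/6 > 0$ since $\tau \leq 1/6$. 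I pick $r^*$ in this set which is also a density point of $\{N = 2\}$, so that $r^*$ is automatically a point of $\calL^1$-approximate continuity of the integer-valued function $N$, and I apply Theorem \ref{local.topology}(F) at $r^*$ with $\veps \in (0,1)$ small enough to ensure $(1-\veps)r^* > \tau r/2$. The theorem produces some $\rho \in (\tau r/2, \tau r]$ for which either (i) $C \cap B(x,\rho)$ is a Lipschitz curve whose endpoints form $C \cap \rmbdry B(x,\rho)$, which is the desired conclusion, or (ii) $C \cap B(x,r^*)$ contains three Lipschitz curves meeting at a single point $\tilde x \in \rmint B(x,r^*)$.

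The main obstacle is to exclude case (ii). The tripod at $\tilde x$ immediately forces $\rmcard(C \cap \rmbdry B(\tilde x, \sigma)) \geq 3$ for every sufficiently small $\sigma > 0$, so by Eilenberg's inequality $\calH^1(C \cap B(\tilde x, \sigma)) \geq 3\sigma$, whence $\Theta^1(\calH^1 \hel C, \tilde x) \geq 3/2$. On the other hand $\|x - \tilde x\| < r^* \leq \tau r$ implies $B(\tilde x, (1-\tau)r) \subset B(x,r) \subset U$ and $(1-\tau)r < \min\{r_0, \rmdist(\tilde x, \rmbdry U)\}$, so monotonicity at $\tilde x$ together with the bound $\calH^1(C \cap B(x,r)) \leq 2r(1+\tau/4)$ and hypothesis (D) gives
\begin{equation*}
\Theta^1(\calH^1 \hel C, \tilde x) \leq \exp[\zeta((1-\tau)r)]\frac{\calH^1(C \cap B(\tilde x, (1-\tau)r))}{2(1-\tau)r} \leq \frac{(1+\tau/4)^2}{1-\tau}.
\end{equation*}
For $\tau \leq 1/6$ the elementary inequality $(1+\tau/4)^2 < (3/2)(1-\tau)$ holds (equivalently $\tau^2/16 + 2\tau < 1/2$), in contradiction with $\Theta^1(\calH^1 \hel C, \tilde x) \geq 3/2$. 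Thus only case (i) can occur, yielding the required radius $\rho$ and Lipschitz curve $\Gamma = C \cap B(x,\rho)$.
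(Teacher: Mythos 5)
Your proof is correct and follows essentially the same strategy as the paper's: use the almost-minimality plus the almost-monotonicity to bound the density ratio, use Eilenberg's inequality to locate a radius in $(\tau r/2, \tau r)$ at which $\rmcard(C\cap\rmbdry B(x,\cdot))=2$ with $\calL^1$-approximate continuity, invoke Theorem \ref{local.topology}(F), and rule out the tripod alternative by showing the almost-monotonicity forces the density at $\tilde x$ strictly below $3/2$. Your final contradiction runs through the slightly cleaner containment $B(\tilde x,(1-\tau)r)\subset B(x,r)$ where the paper passes through an intermediate ball $B(x,\|x-\tilde x\|+(1-\tau)r)$, but this is a cosmetic difference.
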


\begin{proof}
We start arguing as in the proof of Theorem \ref{density.first}. Letting $G$ and $\vartheta$ be defined as in that proof, we infer from our hypotheses (E) and (F) that $\vartheta(r) \geq 1 - \tau/2$, see \eqref{eq.13}, and we infer from our hypotheses (D), (E), (F), and the inequality $\tau \leq 1/6$ that
\begin{equation}
\label{eq.14}
\exp[\zeta(r)] \calH^1(C \cap B(x,r)) \leq (1 + \tau/4)^2 2r \leq (1 + \tau) 2 r \,,
\end{equation}
see \eqref{eq.12}. Thus there exists $\rho \in G \cap (\tau r /2 , \tau r)$ which is a point of $\calL^1$ approximate continuity of $\rho \mapsto \rmcard (C \cap \rmbdry B(x,\rho))$. It then follows from Theorem \ref{local.topology}(F) that our conclusion will be established provided we rule out the second alternative in that conclusion. Assume if possible that such $\tilde{x} \in C \cap B(x,\rho)$ exists. The Eilenberg inequality easily implies that $\Theta^1(\calH^1 \hel C , \tilde{x}) \geq 3/2$. The following contradiction ensues:
\begin{equation*}
\begin{split}
\frac{3}{2} & \leq \Theta^1(\calH^1 \hel C , \tilde{x} ) \\
& \leq \exp[\zeta((1-\tau)r)] \frac{ \calH^1(C \cap B( \tilde{x} , (1-\tau) r))}{ 2 (1-\tau)r } \\
& \leq \exp[\zeta((1-\tau)r)] \frac{ \calH^1(C \cap B( x , \|x-\tilde{x}\|+(1-\tau) r))}{ 2 (1-\tau)r } \\
& \leq \exp[\zeta((1-\tau)r)] \frac{ \calH^1(C \cap B( x , \|x-\tilde{x}\|+(1-\tau) r))}{ 2 (\|x-\tilde{x}\|+(1-\tau) r) } \left(\frac{ \|x-\tilde{x}\|+(1-\tau) r}{ (1-\tau)r} \right)\\
\intertext{which, according to $\|x-\tilde{x}\| \leq \rho \leq \tau r$ and Theorem \ref{monotonicity}, is bounded by}
& \leq \exp[\zeta(r)] \frac{\calH^1(C \cap B(x,r))}{2r} \left( 1 + \frac{\tau}{1-\tau} \right)\\
\intertext{which, according to \eqref{eq.14}, is bounded by}
& \leq (1 + \tau) \left( 1 + \frac{\tau}{1-\tau} \right) \\
& < \frac{3}{2}
\end{split}
\end{equation*}
since $\tau \leq 1/6$.
\end{proof}

\begin{Definition}
\label{def.reg}
Let $C \subset X$ and $x \in C$. We say that:
\begin{enumerate}
\item[(1)] $x$ is a {\em regular point} of $C$ if for each $\delta > 0$ there exists $0 < r < \delta$ such that $C \cap B(x,r)$ is a Lipschitz curve $\Gamma$ and $C \cap \rmbdry B(x,r)$ consists of the two endpoints of $\Gamma$;
\item[(2)] $x$ is a {\em singular point} of $C$ if it is not a regular point of $C$.
\end{enumerate}
The {\em set of regular points} of $C$ is denoted $\rmreg(C)$, and the {\em set of singular points} of $C$ is denoted $\rmsing(C) = C \setminus \rmreg(C)$.
\end{Definition}

\begin{Theorem}
\label{lip.reg.bis}
Assume that:
\begin{enumerate}
\item[(A)] $C \subset X$ is compact and connected, $U \subset X$ is open, $ r_0 > 0$;
\item[(B)] $\xi$ is a Dini gauge;
\item[(C)] $C$ is $(\xi,r_0)$ almost minimizing in $U$.
\end{enumerate}
It follows that $U \cap \rmreg(C) = U \cap \{ x : \Theta^1(\calH^1 \hel C , x) = 1 \}$, that $U \cap \rmsing(C)$ is relatively closed in $U \cap C$, and that $\calH^1(U \cap \rmsing(C)) = 0$.
\end{Theorem}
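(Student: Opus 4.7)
The plan is to first establish the pointwise density characterization
\[
U \cap \rmreg(C) = U \cap \{ x : \Theta^1(\calH^1 \hel C, x) = 1 \}.
\]
Once that is in hand, I expect the relative closedness of $U \cap \rmsing(C)$ to follow by passing to the limit in the almost-monotonicity formula of Theorem~\ref{monotonicity}, and the $\calH^1$-negligibility to follow from the density comparison principle~\ref{26} applied to the finite Borel measure $\mu = \calH^1 \hel (C \cap V)$ on small open balls $V$ with $\rmclos V \subset U$. The main obstacle should be the converse direction of the characterization: I must upgrade the purely quantitative assumption $\Theta^1 = 1$ into the topological conclusion that $C \cap B(x, \rho)$ is a Lipschitz arc for arbitrarily small $\rho$, and this will require selecting good radii at which Theorem~\ref{lip.reg} can be applied.

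For the direct inclusion $\rmreg(C) \cap U \subset \{ \Theta^1 = 1 \}$, at a regular point $x$ I pick arbitrarily small $r$ with $B(x,r) \subset U$, $r \leq r_0$, and such that $C \cap B(x, r)$ is a Lipschitz curve whose two endpoints form $C \cap \rmbdry B(x, r) = \{x_1, x_2\}$. Since $x \in C \cap B(x, r)$, Proposition~\ref{prop.comp} applied with $x_0 = x$ produces the competitor $(C \setminus B(x, r)) \cup \blseg x, x_1 \brseg \cup \blseg x, x_2 \brseg$ of length $2r$ inside $B(x, r)$, so almost-minimality gives $\calH^1(C \cap B(x, r)) \leq 2r(1 + \xi(r))$. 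Combining with $\Theta^1_* \geq 1$ from Theorem~\ref{density.first}(C) and the existence of the density from Corollary~\ref{density.dichotomy} yields $\Theta^1 = 1$. For the converse, if $\Theta^1(\calH^1 \hel C, x) = 1$ then the dichotomy of Theorem~\ref{density.first}(D) forces the first (approximate-limit) branch, and inspection of its proof reveals that the set $G = \{ \rho > 0 : \rmcard(C \cap \rmbdry B(x, \rho)) = 2 \}$ has density one at the origin. Given $\tau \in (0, 1/6]$, I can therefore pick $r \in G$ arbitrarily small satisfying $\exp[\zeta(r)] \leq 1 + \tau/4$, $\xi(r) \leq \tau/4$, and $B(x, r) \subset U$ simultaneously, and Theorem~\ref{lip.reg} then exhibits a radius $\rho \in [\tau r / 2, \tau r]$ certifying the regularity of $x$.

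For relative closedness, suppose $x_n \in \rmsing(C) \cap U$ converges to some $x \in C \cap U$. By the characterization and Corollary~\ref{density.dichotomy}, $\Theta^1(\calH^1 \hel C, x_n) \geq 3/2$ for every $n$. I fix $r > 0$ so small that $B(x, 2r) \subset U$ and $2r < r_0$. For $\epsilon \in (0, r)$ and $n$ large enough to ensure $\|x_n - x\| < \epsilon$, the inclusion $B(x_n, r - \epsilon) \subset B(x, r)$ holds and monotonicity at $x_n$ gives
\[
\frac{3}{2} \leq \exp[\zeta(r - \epsilon)] \frac{\calH^1(C \cap B(x_n, r - \epsilon))}{2(r - \epsilon)} \leq \exp[\zeta(r - \epsilon)] \frac{\calH^1(C \cap B(x, r))}{2(r - \epsilon)}.
\]
Sending $\epsilon \to 0^+$ and then $r \to 0^+$ yields $\Theta^1(\calH^1 \hel C, x) \geq 3/2$, so $x \in \rmsing(C)$ by the characterization.

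Finally, for the measure estimate, I cover $U$ by countably many open balls $V$ with $\rmclos V \subset U$, and for each such $V$ consider the finite Borel measure $\mu = \calH^1 \hel (C \cap V)$ on the metric subspace $V$. For $x \in \rmsing(C) \cap V$ and $r > 0$ small enough that $B(x, r) \subset V$, testing the definition of $\widetilde{\Theta}^1(\mu, x)$ on the closed ball $B(x, r)$ and using $\rmdiam B(x, r) \leq 2r$ together with the characterization yields $\widetilde{\Theta}^1(\mu, x) \geq \Theta^1(\calH^1 \hel C, x) \geq 3/2$. Since $\rmsing(C) \cap V$ is Borel by the closedness step, \ref{26} gives $\calH^1(\rmsing(C) \cap V) = \mu(\rmsing(C) \cap V) \geq \tfrac{3}{2} \calH^1(\rmsing(C) \cap V)$, which combined with $\calH^1(\rmsing(C) \cap V) \leq \calH^1(C) < \infty$ forces $\calH^1(\rmsing(C) \cap V) = 0$. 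Summing over the countable cover concludes.
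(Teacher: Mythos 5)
Your proposal is correct. The characterization $U\cap\rmreg(C)=U\cap\{\Theta^1(\calH^1\hel C,\cdot)=1\}$ is established by essentially the same mechanism as the paper: both directions hinge on finding small radii $r$ with $\rmcard(C\cap\rmbdry B(x,r))=2$ (the paper locates such an $r$ directly via the pair of bounds $2r'\leq\calH^1(C\cap B(x,r'))<3r'$ and Eilenberg's inequality, whereas you appeal to the density-one statement for the set $G$ extracted from the proof of Theorem~\ref{density.first}; both are valid). Your relative-closedness argument by explicit sequences is a hands-on unpacking of the paper's slicker observation that $\Theta(\cdot)=\inf_r\exp[\zeta(r)]\calH^1(C\cap B(\cdot,r))/(2r)$ is upper semicontinuous as an infimum of upper semicontinuous functions.

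Where you genuinely diverge is in the $\calH^1$-negligibility of $U\cap\rmsing(C)$. The paper invokes the rectifiability of finite-length continua from Ambrosio--Tilli and then Kirchheim's theorem that rectifiable subsets of metric spaces have $\calH^1$-density one almost everywhere, so the singular set (where $\Theta^1\geq3/2$) is automatically null. You instead stay entirely inside the machinery of this paper: you bound $\widetilde\Theta^1(\mu,x)\geq 3/2$ for $\mu=\calH^1\hel(C\cap V)$ at every singular $x$, then apply the density comparison principle~\ref{26} to the Borel set $A=\rmsing(C)\cap V$ (Borel by the closedness you just proved), obtaining $\calH^1(A)=\mu(A)\geq\tfrac32\calH^1(A)$, which together with finiteness of $\calH^1(C)$ forces $\calH^1(A)=0$. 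Your route has the advantage of being self-contained --- it avoids Kirchheim's result entirely and uses only \ref{26}, which is proved in the preliminaries --- at the mild cost of introducing the localizing balls $V$ with $\rmclos V\subset U$ to keep $\mu$ finite and Borel on a well-behaved metric subspace.
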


\begin{proof}
Let $x \in U$. We first show that if $\Theta^1(\calH^1 \hel C,x) = 1$ then $x$ is a regular point of $C$. Since $C$ is closed, we infer that $x \in C$. If $r' > 0$ is sufficiently small then $2r' \leq \calH^1(C \cap B(x,r')) < 3r'$; the first inequality follows as in the proof of Theorem \ref{density.first}(C), whereas the second results from our assumption. Therefore there exists $0 < r < r'$ such that $\rmcard(C \cap \rmbdry B(x,r)) = 2$, according to Eilenberg's inequality. One can of course assume that $r'$ is small enough for hypotheses (D) and (E) of Theorem \ref{lip.reg} to be verified as well. It then follows from that Theorem that $C \cap B(x,r'')$ is indeed a Lipschitz curve, for some $0 < r'' < r'$. Since $r'$ is arbitrarily small, we conclude that $x$ is a regular point of $C$.
\par 
We now assume that $x \in C$ is a regular point of $C$ and we will establish that $\Theta^1(\calH^1 \hel C ,x)=1$. By definition, there are $r > 0$ arbitrarily small such that, in particular, $\rmcard(C \cap \rmbdry B(x,r)) = 2$. If we denote by $x_{0,r}$ and $x_{1,r}$ the corresponding two intersection points, then
\begin{equation*}
C' = \left( C \setminus B(x,r) \right) \cup \left( \blseg x_{0,r},x \brseg \cup \blseg x,x_{1,r} \brseg \right)
\end{equation*}
is a competitor for $C$ in $B(x,r)$, according to \ref{prop.comp}, and therefore $\calH^1(C \cap B(x,r)) \leq (1+\xi(r)) 2r$ according to the almost minimizing property of $C$. If $r$ is chosen small enough for $1 + \xi(r) < (3/2)\exp[-\zeta(r)]$ then $\Theta^1(\calH^1 \hel C ,x) < 3/2$ according to \ref{monotonicity}. In turn, it follows from \ref{density.dichotomy} that $\Theta^1(\calH^1 \hel C , x ) = 1$.
\par 
We turn to proving the relative closedness of $\rmsing(C)$ in $U$. We first observe that the function
\begin{equation*}
\Theta : U \to \R : x \mapsto \Theta^1(\calH^1 \hel C , x)
\end{equation*}
is upper semicontinuous. Indeed, according to \ref{monotonicity}, $\Theta(x) = \inf_{ r > 0} \Theta_r(x)$, where
\begin{equation*}
\Theta_r : U \to \R : x \mapsto \exp[-\zeta(r)] \frac{\calH^1(C \cap B(x,r))}{2r} \,.
\end{equation*}
It then suffices to note that $U \to \R : x \mapsto (\calH^1 \hel C) (B(x,r))$ is uper semicontinuous, for each $r > 0$. Finally, $U \cap \rmsing(C) = U \cap \{ x : \Theta^1(\calH^1 \hel C , x) \geq 3/2\}$, according to \ref{density.dichotomy}, and the proof is complete.
\par
To conclude, since $C$ is rectifiable  \cite[Theorem 4.4.8.]{AMBROSIO.TILLI} it follows from \cite{KIR.94} that $\Theta^1( \calH^1 \hel C,x) = 1$ for $\calH^1$ almost every $x \in C$. Hence $\calH^1(U \cap \rmsing(C))=0$.
\end{proof}

\section{The excess of length of a nonstraight path and a regularity theorem}
\label{sec.5}

\begin{Empty}[Local hypothesis about the ambient Banach space]
In this section $X$ denotes a uniformly rotund Banach space, with modulus of uniform rotundity $\delta_X$, recall \ref{UR}.
\end{Empty}

If $x_0,x_1$ and $z$ are the three vertices of a nondegenerate triangle in a {\em Hilbert space}, then the length of the broken line from $x_0$ to $x_1$ passing through $z$ is substantially larger than the length of the straight path from $x_0$ to $x_1$, specifically 
\begin{equation*}
\|x_0-z\| + \|z-x_1\| \geq \|x_0-x_1\| \sqrt{ 1 + \frac{h^2}{\max\{\|x_0-z\|^2,\|z-x_1\|^2\}}} \,, 
\end{equation*}
where $h = \rmdist (z , L)$, $L = x_0 + \rmspan \{x_1-x_0 \}$. This ensues from the Pythagorean Theorem and from the observation that among all such triangles with same height $h$, the isoceles triangle has the shortest perimeter.
\par 
If $X$ is an arbitrary Banach space, the collection of inequalities
\begin{equation*}
\|x_0-z\| + \|z-x_1\| \geq \|x_0-x_1\| \left( 1 + \delta_2 \left( \frac{h}{\max\{\|x_0-z\|,\|z-x_1\|\}} \right) \right) \,,
\end{equation*}
for some gauge $\delta_2$, is equivalent to the uniform convexity of $X$, see \cite[Lemma IV.1.5]{DEVILLE.GODEFROY.ZIZLER}. Next comes an ersatz of the Pythagorean Theorem that makes this observation quantitative, showing that $\delta_2$ and the modulus of uniform convexity of $X$ have the same asymptotic behavior.

\begin{Proposition}
\label{excess.length}
Assume that $X$ is a uniformly convex Banach space with modulus of uniform convexity $\delta_X$, and that $x_0,x_1,z$ are the vertices of a nondegenerate triangle. It follows that
\begin{equation*}
\|x_0-z\| + \|z-x_1\| \geq \|x_0-x_1\| \left( 1 + \delta_X \left( \frac{ \rmdist(z, x_0 + \rmspan \{x_1-x_0\}) } { 2 (\|x_0-z\| + \|z-x_1\| ) } \right) \right)\,.
\end{equation*}
\end{Proposition}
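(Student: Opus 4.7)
The plan is to reduce to the uniformly convex estimate on a pair of unit vectors. Set $a := \|x_0-z\|$, $b := \|z-x_1\|$, $c := \|x_0-x_1\|$, $s := a+b$, and $h := \rmdist(z,L)$ where $L := x_0 + \rmspan\{x_1-x_0\}$. The statement is symmetric under swapping $x_0$ and $x_1$, so I may assume $a \leq b$. Put $u := z-x_0$ and $v := x_1-z$, and let $\hat u := u/a$, $\hat v := v/b$ be the associated unit vectors, so that $x_1 - x_0 = u + v = a\hat u + b\hat v$.

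The first step is a short algebraic identity relating $\hat u - \hat v$ to the height $h$: one computes $bu - av = sz - bx_0 - ax_1 = s(z - N)$, where $N := (bx_0 + ax_1)/s = x_0 + (a/s)(x_1-x_0) \in L$ divides the segment $[x_0,x_1]$ in the ratio $a:b$. Since $N \in L$ we have $\|z - N\| \geq h$, whence $\|\hat u - \hat v\| = \|bu-av\|/(ab) \geq sh/(ab)$. (Note that the midpoint of $[x_0,x_1]$ would not yield such a bound: it is precisely the weights $(b/s,a/s)$ that make the identity work.) Applying the definition of $\delta_X$ to the unit vectors $\hat u, \hat v$ and using monotonicity of $\delta_X$ gives $\|\hat u + \hat v\| \leq 2(1 - \delta_X(sh/(ab)))$. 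Writing $a\hat u + b\hat v = a(\hat u + \hat v) + (b-a)\hat v$ and taking norms (using $\|\hat v\| = 1$) then yields $c \leq 2a(1 - \delta_X(sh/(ab))) + (b - a) = s - 2a\delta_X(sh/(ab))$, i.e.
\[
s - c \geq 2a\,\delta_X(sh/(ab)).
\]

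The main obstacle is to convert this into an inequality involving $\delta_X(h/(2s))$ with prefactor $c$. Monotonicity of $\delta_X$ alone is insufficient, because $a$ may be much smaller than $c$ (for instance when $z$ lies near $x_0$, so that $a$ is tiny while $c$ stays close to $b$). To resolve this I would invoke Figiel's classical theorem that $t \mapsto \delta_X(t)/t$ is nondecreasing on $(0,2]$ (see Lindenstrauss--Tzafriri, Vol.~II, Proposition~1.e.8). Since $ab \leq s^2/4 \leq 2s^2$, one has $h/(2s) \leq sh/(ab)$, and Figiel's inequality gives $\delta_X(sh/(ab)) \geq (2s^2/(ab))\,\delta_X(h/(2s))$. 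Plugging this in,
\[
s - c \geq 2a \cdot \frac{2s^2}{ab}\,\delta_X(h/(2s)) = \frac{4s^2}{b}\,\delta_X(h/(2s)).
\]
Finally, $b \leq s$ and $c \leq s$ yield $4s^2/b \geq 4s \geq c$, so $s - c \geq c\,\delta_X(h/(2s))$, which is the desired inequality.
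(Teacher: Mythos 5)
Your proof is correct but proceeds along a genuinely different route from the one in the paper. The paper's argument is entirely self-contained: it passes to the two-dimensional affine plane through $x_0,x_1,z$, picks a Hahn--Banach supporting line in the direction $v=(x_1-x_0)/\|x_1-x_0\|$, locates the intersection point $y'$ of $z+L_v$ with $x_0+\rmspan\{v\}$, and then, in two cases according to whether $y'$ lies in $[x_0,x_1]$ or not, compares the broken path to the secant by applying the modulus of convexity on a single sphere $S_0$ centred at $x_0$ together with a small projection argument. Your argument instead writes $x_1-x_0=a\hat u+b\hat v$ with the unit vectors $\hat u=(z-x_0)/a$ and $\hat v=(x_1-z)/b$, exploits the identity $b(z-x_0)-a(x_1-z)=s(z-N)$ for the weighted point $N=(bx_0+ax_1)/s\in L$, which gives $\|\hat u-\hat v\|\geq sh/(ab)$, applies $\delta_X$ directly to the pair $(\hat u,\hat v)$, and then invokes Figiel's theorem that $\veps\mapsto\delta_X(\veps)/\veps$ is nondecreasing to trade the argument $sh/(ab)$ for $h/(2s)$ while upgrading the prefactor $2a$ to $c$. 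The algebra is sound at every step; in particular $sh/(ab)\leq\|\hat u-\hat v\|\leq 2$, so the quantities fed to $\delta_X$ stay in the admissible range, and the closing chain $4s^2/b\geq 4s\geq c$ is correct. What your approach buys is brevity and conceptual transparency (the role of the height $h$ is immediate from the identity defining $N$); what the paper's approach buys is self-containment, using only the definition of $\delta_X$, whereas you lean on Figiel's monotonicity, a genuine theorem that is not a formal consequence of the definition. Two small points worth tidying: you should verify the Lindenstrauss--Tzafriri reference number, and note that the paper's $\delta_X$ uses the normalisation $\max\{\|x\|,\|y\|\}\leq 1$ while Figiel's result is usually stated for $\|x\|=\|y\|=1$, so a sentence on why the two moduli agree would be welcome. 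Finally, observe that the paper's intermediate estimate \eqref{eq.9} is slightly sharper, with $2\max\{\|x_0-z\|,\|z-x_1\|\}$ in the denominator instead of $2(\|x_0-z\|+\|z-x_1\|)$; your method does not recover that sharper form (the prefactor $2a$ is too weak when $z$ is near $x_0$), but only the weaker bound in the statement is used downstream in \ref{keylemma}, so nothing is lost.
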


\begin{proof}
We let $S_X = \rmbdry B(0,1)$ denote the unit sphere. Referring to the Hahn-Banach Theorem, with each unit vector $v \in S_X$ we associate a closed linear subspace $H_v \subset X$ of codimension 1 such that $B_X$ lies entirely on one side of $v+H_v$. We observe that $v \not\in H_v$ by necessity, and that there is no restriction to assume that $H_{-v} = H_v$.
\par 
We denote by $V$ the 2 dimensional affine subspace of $X$ containing $x_0,x_1$ and $z$, and $V_0$ the corresponding linear subspace. We define $v = \|x_1-x_0\|^{-1}(x_1-x_0)$, $L_v = V_0 \cap H_v$, and $L'_v = \rmspan\{v\}$. As $L_v$ and $L'_v$ are nonparallel, the following defines $y'$:
\begin{equation*}
\{y'\} = (x_0 + L'_v) \cap (z + L_v) \,.
\end{equation*}
We now distinguish between the cases when $y'$ lies on $x_0+L'_v$ between $x_0$ and $x_1$, or not.
\par 
{\em First case : $y' \in \blseg x_0,x_1 \brseg$.} We define $\rho_0 = \|x_0-y'\|$, $B_0 = V \cap B(x_0,\rho_0)$, and we let $S_0$ be the boundary of $B_0$ relative to $V$. Our choice of $H_v$ guarantess that $B_0$ lies, in $V$, on one side of $y' + L_v$. Therefore, among the two points of which $S_0 \cap (x_0 + \rmspan \{z-x_0 \})$ consists, one belongs to $\blseg x_0,z \brseg$. We denote it as $y$. 

\begin{center}
\begin{figure}[!h]
\scalebox{1} 
{
\begin{pspicture}(0,-2.669416)(11.373463,3.2067313)
\rput{-45.0}(0.9744069,2.1613634){\psellipse[linewidth=0.04,dimen=outer](3.0962,-0.09553148)(2.735,1.643688)}
\psdots[dotsize=0.12](3.0534627,-0.06826858)
\psline[linewidth=0.03cm](0.5934628,-0.06826858)(9.373463,-0.08826858)
\psline[linewidth=0.03cm](1.2734629,-1.1082686)(8.033463,2.8317313)
\psline[linewidth=0.03cm](3.7334628,3.1917315)(5.8734627,-1.9882686)
\psdots[dotsize=0.12](4.693463,0.89173144)
\psdots[dotsize=0.12](4.493463,0.7917314)
\psdots[dotsize=0.12](5.073463,-0.08826858)
\psdots[dotsize=0.12](7.033463,-0.08826858)
\usefont{T1}{ptm}{m}{n}
\rput(7.1234627,0.19673142){$x_1$}
\usefont{T1}{ptm}{m}{n}
\rput(5.3334627,0.13673142){$y'$}
\usefont{T1}{ptm}{m}{n}
\rput(2.9834628,0.2967314){$x_0$}
\usefont{T1}{ptm}{m}{n}
\rput(1.5134628,2.4367313){$S_0$}
\usefont{T1}{ptm}{m}{n}
\rput(1.9734628,1.4567314){$B_0$}
\usefont{T1}{ptm}{m}{n}
\rput(4.8534627,1.2167314){$z$}
\usefont{T1}{ptm}{m}{n}
\rput(4.1234627,0.8567314){$y$}
\usefont{T1}{ptm}{m}{n}
\rput(6.403463,-2.3232687){$z+L_v$}
\usefont{T1}{ptm}{m}{n}
\rput(10.463463,-0.02326858){$x_0+L_v'$}
\psline[linewidth=0.03cm,arrowsize=0.05291667cm 2.0,arrowlength=1.4,arrowinset=0.4,tbarsize=0.07055555cm 5.0]{<-|}(7.8534627,-0.7082686)(7.213463,-0.7082686)
\usefont{T1}{ptm}{m}{n}
\rput(7.523463,-0.9632686){$v$}
\end{pspicture} 
}
\caption{Situation of the first case.}
\end{figure}
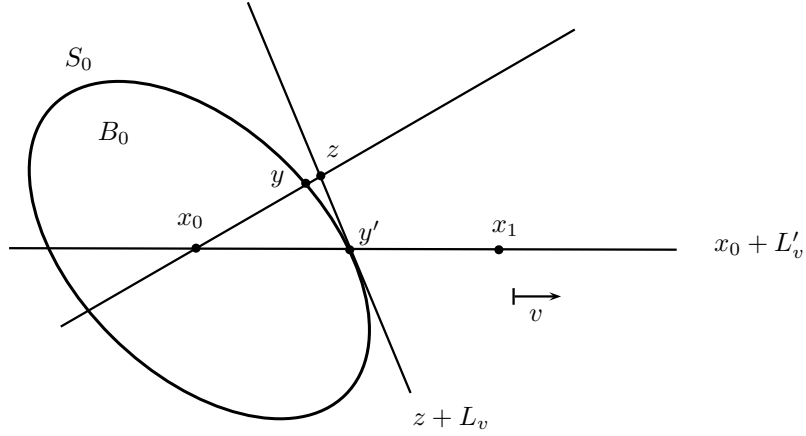
\end{center}

We now abbreviate $\veps = \rho_0^{-1} \|y'-y\|$ and we infer from the uniform convexity of $X$ that
\begin{equation}
\label{eq.5.bis}
\left\| \frac{y'+y}{2} \right\| = (1-\beta) \rho_0 \leq (1- \delta_X(\veps)) \rho_0 \,,
\end{equation}
where the equality defines $\beta$ (thus $\beta \geq \delta_X(\veps)$).
\par 
We claim that
\begin{equation}
\label{eq.5}
\|y-z\| \geq \beta \rho_0 \,.
\end{equation}
In other words, we are comparing the lengths of the line segments $\blseg y,z \brseg$ and $\blseg (y'+y)/2 , w \brseg$, where $w$ is at the intersection of $x_0 + \rmspan \{ (y'+y)/2 - x_0\}$ and $S_0$, in the triangle of vertices $y',y$ and $z$ (see Figure 3 just after). 

With each $s \in \R$ we associate $z_s = y' + s(z-y')$. 
If we denote by $P : V \to V$ the projection onto the line $y' + \rmspan \{y'-x_0\}$, parallel to the line $y' + \rmspan \{z-y'\}$, then the maps $f_s = \left( P \restriction_{\blseg x_0,z_s \brseg} \right)^{-1}$ are affine bijections from $\blseg x_0,y' \brseg$ to $\blseg x_0,z_s \brseg$.
We note that the convex function $s \mapsto \|z_s-x_0\|$ has a mimimum at $s=0$ -- according to our choice of $H_v$ --, and therefore is nondecreasing on the interval $s \geq 0$. It follows that there exists a nondecreasing function $s \mapsto \lambda(s)$, $s \geq 0$, such that
\begin{equation*}
\calH^1( f_s(I)) = \lambda(s) \calH^1(I)
\end{equation*}
for each interval $I \subset \blseg x_0,y' \brseg$. We now choose $0 \leq s_1 \leq s_2$ such that $(y'+y)/2 \in \blseg x_0,z_{s_1} \brseg$ and $z = z_{s_2}$. It is easily seen that
\begin{equation*}
P \left( \blseg y,z \brseg \right) \supset P \left( \blseg (y'+y)/2,z_{s_1} \brseg \right) \supset P \left( \blseg (y'+y)/2 , w \brseg \right) \,
\end{equation*}
and the the proof of \eqref{eq.5} follows. 
\begin{center}
\begin{figure}[!h]\label{figurebluered}
\scalebox{1} 
{
\begin{pspicture}(0,-3.455)(9.395,3.455)
\definecolor{color1097}{rgb}{1.0,0.2,0.2}
\definecolor{color1099}{rgb}{0.4,0.4,1.0}
\definecolor{color1151}{rgb}{0.4,0.4,0.4}
\psline[linewidth=0.03cm](1.1,-1.24)(9.38,-1.28)
\psline[linewidth=0.03cm](1.1,-1.22)(7.34,2.2)
\psdots[dotsize=0.12](1.1,-1.24)
\usefont{T1}{ptm}{m}{n}
\rput(0.45,-1.255){$x_0$}
\psbezier[linewidth=0.03](3.36,2.44)(5.32,1.74)(6.66,0.44)(6.86,-3.32)
\psline[linewidth=0.03cm](5.52,3.44)(7.1,-3.44)
\usefont{T1}{ptm}{m}{n}
\rput(6.12,1.825){$z$}
\usefont{T1}{ptm}{m}{n}
\rput(6.96,-1.035){$y'$}
\usefont{T1}{ptm}{m}{n}
\rput(5.39,1.525){$y$}
\psdots[dotsize=0.12](5.98,1.46)
\psdots[dotsize=0.12](6.62,-1.28)
\psline[linewidth=0.02cm](1.08,-1.26)(7.94,0.48)
\psline[linewidth=0.02cm](5.44,1.14)(6.6,-1.3)
\psdots[dotsize=0.12](5.42,1.16)
\psline[linewidth=0.05cm,linecolor=color1097](5.98,-0.02)(6.14,0.02)
\psline[linewidth=0.05cm,linecolor=color1099](5.48,1.2)(5.92,1.44)
\psline[linewidth=0.05cm,linecolor=color1099](5.46,1.16)(5.94,1.42)
\psline[linewidth=0.05cm,linecolor=color1097](5.98,0.0)(6.14,0.04)
\usefont{T1}{ptm}{m}{n}
\rput(4.27,-1.615){$\rho_0$}
\psline[linewidth=0.03cm,linecolor=color1151,arrowsize=0.05291667cm 2.0,arrowlength=1.4,arrowinset=0.4]{<-}(5.88,0.04)(2.58,1.0)
\usefont{T1}{ptm}{m}{n}
\rput(2.36,1.345){$\beta\rho_0$}
\usefont{T1}{ptm}{m}{n}
\rput(3.8,2.605){$S_0$}
\end{pspicture} 
}
\caption{Illustration of \eqref{eq.5}: the blue segment is larger than the tiny red one.}
\end{figure}
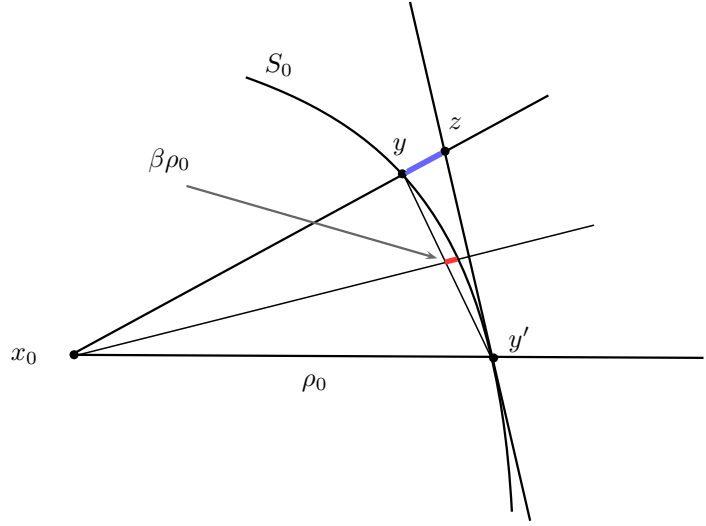
\end{center}
\par 
It ensues from \eqref{eq.5} that
\begin{equation}
\label{eq.7}
\begin{split}
\|x_0-z\| & = \|x_0-y\| + \|y-z\| \\
& \geq (1 + \beta) \|x_0-y\| \\
& \geq (1 + \delta_X(\veps)) \|x_0-y \| \,.
\end{split}
\end{equation}
We are now going to establish that
\begin{equation}
\label{eq.6}
\|x_0-z\| \geq \left( 1 + \delta_X \left( \frac{ \rmdist(z , x_0 + \rmspan \{x_1 - x_0 \})}{2 \|x_0-z \|} \right) \right) \|x_0-y'\| \,.
\end{equation}
Either $\|x_0-z\| \geq 2 \|x_0-y\|$, in which case \eqref{eq.6} readily holds since $\delta_X(\eta) \leq 1$ for each $0 < \eta \leq 2$, or $\|x_0-z\| < 2 \|x_0-y\|$. In the latter case,
\begin{equation*}
\begin{split}
\rmdist( z, x_0 + \rmspan \{x_1-x_0\}) & = \frac{\|z-x_0\|}{\|y-x_0\|} \rmdist(y , x_0 + \rmspan \{x_1-x_0\}) \\
& \leq 2 \rmdist(y , x_0 + \rmspan \{x_1-x_0\}) \\
& \leq 2 \|y-y'\| \,,
\end{split}
\end{equation*}
therefore
\begin{equation*}
\veps = \frac{\|y-y'\|}{\rho_0} \geq \frac{ \rmdist( z, x_0 + \rmspan \{x_1-x_0\}) }{ 2 \|x_0-y\| }
\geq \frac{ \rmdist( z, x_0 + \rmspan \{x_1-x_0\}) }{ 2 \|x_0-z\| } \,. 
\end{equation*}
Thus \eqref{eq.6} follows from \eqref{eq.7}, because $\delta_X$ is nondecreasing.
\par 
We now repeat the same argument with the vertex $x_1$ playing the role of $x_0$. Since $L_{-v} = L_v$, we observe that the new point $y'$ coincides with the one found previously. The analogous calculations therefore yield
\begin{equation}
\label{eq.8}
\|x_1-z\| \geq \left( 1 + \delta_X \left( \frac{ \rmdist(z , x_0 + \rmspan \{x_1 - x_0 \})}{2 \|x_1-z \|} \right) \right) \|x_1-y'\| \,.
\end{equation}
Upon summing inequalities \eqref{eq.6} and \eqref{eq.8} we obtain
\begin{equation}
\label{eq.9}
\|x_0-z\| + \|z-x_1\| \geq \left( 1 + \delta_X \left( \frac{ \rmdist( z , x_0 + \rmspan \{x_1-x_0 \} ) }{ 2 \max \{ \|x_0-z\| , \|z-x_1\| \} } \right) \right) \|x_0-x_1\| \,,
\end{equation}
which proves the proposition in this case.
\par 
{\em Second case : $y' \not \in \blseg x_0,x_1 \brseg$.} We start by observing that we may as well assume
\begin{equation}
\label{eq.10}
\lambda := \min \{ \|x_0-z\| , \|z-x_1\| \} < \|x_0-x_1\|
\end{equation}
for otherwise the conclusion
\begin{multline*}
\|x_0-z\| + \|z-x_1\| \geq 2 \|x_0-x_1\| \\ \geq \left( 1 + \delta_X \left( \frac{ \rmdist( z , x_0 + \rmspan \{x_1-x_0 \} ) }{ 2 ( \|x_0-z\| + \|z-x_1\| ) } \right) \right) \|x_0-x_1\|
\end{multline*}
readily follows from the trivial inequality $0 < \delta_X(\eta) \leq 1$, $0 < \eta \leq 2$. We define $c = (x_0+x_1)/2$, $r = \|x_0-x_1\|/2$, and we let
\begin{equation*}
\{x_0' , x_1' \} = (x_0 + L'_v) \cap \rmbdry B(c,r + \lambda) \,.
\end{equation*}
It follows from our choice of $\lambda$, \eqref{eq.10}, and the definition of $L_v$ that the point $y'$ defined by
\begin{equation*}
\{y'\} = (x_0 + L'_v) \cap (z + L_v )
\end{equation*}
belongs to the line segment $\blseg x_0' , x_1' \brseg$. Therefore the {\em first case} of this proof applies to the triangle with vertices $x_0',x_1'$ and $z$. Accordingly,
\begin{equation}
\label{eq.11}
\|x_0' - z \| + \|z-x_1' \| \geq (1 + \delta' ) \|x_0' - x_1' \|
\end{equation}
where
\begin{equation*}
\delta' = \delta_X \left( \frac{ \rmdist ( z , x_0 + \rmspan \{x_1-x_0\} ) }{ 2 \max \{ \|x_0' - z \| , \| z-x_1' \| \} } \right) \,.
\end{equation*}
We notice that
\begin{multline*}
\|x_0'-z\| + \|z-x_1'\| \leq \|x_0-z\| + \|z-x_1\| + 2 \lambda \\ \leq \|x_0-z\| + \|z-x_1\| + 2 (1+ \delta')\lambda
\end{multline*}
as well as
\begin{equation*}
\|x_0'-x_1'\| = \|x_0-x_1\| + 2 \lambda \,.
\end{equation*}
Plugging these inequalities in \eqref{eq.11} yields
\begin{equation*}
\|x_0-z\| + \|z-x_1\| \geq (1 + \delta') \|x_0-x_1\| \,,
\end{equation*}
and it remains to observe that
\begin{equation*}
\delta' \geq \delta_X \left( \frac{ \rmdist ( z , x_0 + \rmspan \{x_1-x_0\} ) }{ 2 ( \|x_0 - z \| + \| z-x_1\| )} \right)
\end{equation*}
because
\begin{equation*}
\max \{ \|x_0'-z\| , \|z-x'_1\| \} \leq \max \{ \|x_0-z \| , \|z-x_1\| \} + \lambda = \|x_0-z\| + \|z-x_1\| \,.
\end{equation*}
The proof is now complete.
\end{proof}


The following is an ersatz of the Pythagorean Theorem, valid in uniformly convex Banach spaces.

\begin{Proposition}[Height bound]
\label{keylemma}
Assume that:
\begin{enumerate}
\item[(A)] $C \subset X$ is compact, connected, $U \subset X$ is open, $0 < r < r_0$, $x \in C$, $B(x,r) \subset U$;
\item[(B)] $\xi$ is a gauge and $\xi(r) \leq \delta_X(1/32)$;
\item[(C)] $C$ is $(\xi,r_0)$ almost minimizing in $U$;
\item[(D)] $C \cap B(x,r)$ is a Lipschitz curve with endpoints $x_0$ and $x_1$, and $C \cap \rmbdry B(x,r) = \{x_0,x_1\}$;
\item[(E)] $L = \rmspan\{x_1-x_0\}$.
\end{enumerate}
It follows that
\begin{enumerate}
\item[(F)] $\|x_1-x_0\| \geq (1-\xi(r)) 2r$;
\item[(G)] For every $z \in C \cap B(x,r)$ one has
\begin{equation*}
\rmdist(z , x+L) \leq 16 ( \delta_X^{-1} \circ \xi)(r) \,.
\end{equation*}
\end{enumerate}
\end{Proposition}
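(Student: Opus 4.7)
The plan is to combine the almost-minimizing property of $C$ against an explicit straight-segment competitor with Proposition \ref{excess.length}, applied both to an arbitrary point $z \in \Gamma := C \cap B(x,r)$ and to the center $x$ itself. For part (F), I would first observe that $C' := (C \setminus B(x,r)) \cup \blseg x_0, x_1 \brseg$ is a competitor for $C$ in $B(x,r)$: it is compact and connected because $\Gamma$ connects $x_0$ to $x_1$ inside $B(x,r)$, as does $\blseg x_0, x_1 \brseg$, so the connectivity of $C$ outside $B(x,r)$ is preserved. The almost-minimizing hypothesis yields $\calH^1(\Gamma) \leq (1+\xi(r)) \|x_1-x_0\|$. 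For the matching lower bound, for each $0 < \rho < r$ I have $\rho < r_0$ and $\rho < \rmdiam C$ (since $\|x_0-x_1\|>0$ forces $\rmdiam C \geq r$), so Theorem \ref{local.topology}(D) applied at scale $\rho$ gives $\rmcard(C \cap \rmbdry B(x,\rho)) \geq 2$; Eilenberg's inequality \ref{eilenberg} applied to $f(y) = \|y-x\|$ and $A = C \cap B(x,r)$ then yields $2r \leq \calH^1(\Gamma)$. Combining these two bounds gives $\|x_1-x_0\| \geq 2r/(1+\xi(r)) \geq 2r(1-\xi(r))$, which is (F).

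For part (G), I fix $z \in \Gamma$ and split the Lipschitz curve $\Gamma$ at $z$ into subarcs $\Gamma_1, \Gamma_2$ with endpoints $\{x_0,z\}$ and $\{z,x_1\}$ respectively. Lemma \ref{Ahlfors}(B) gives $\calH^1(\Gamma) \geq \|x_0-z\| + \|z-x_1\|$. If $z \in x_0+L$ then $\rmdist(z,x_0+L)=0$ and there is nothing to prove at this intermediate step; otherwise the triangle $x_0,z,x_1$ is nondegenerate and Proposition \ref{excess.length} yields
\begin{equation*}
\|x_0-z\| + \|z-x_1\| \geq \|x_0-x_1\|\left(1 + \delta_X\!\left(\frac{\rmdist(z, x_0+L)}{2(\|x_0-z\|+\|z-x_1\|)}\right)\right).
\end{equation*}
Combining this with the almost-minimizing bound $\calH^1(\Gamma) \leq (1+\xi(r))\|x_0-x_1\|$ produces $\delta_X(h/(2\ell)) \leq \xi(r)$, where $h := \rmdist(z,x_0+L)$ and $\ell := \|x_0-z\|+\|z-x_1\|$. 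Since $\ell \leq \calH^1(\Gamma) \leq (1+\xi(r))\cdot 2r \leq 4r$ (using $\xi(r) \leq \delta_X(1/32) \leq 1$), this yields $h \leq 8r\,\delta_X^{-1}(\xi(r))$.

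To finish, I translate the bound from the line $x_0+L$ to the parallel line $x+L$. The point $x$ belongs to $\Gamma$ since $x \in C \cap B(x,r)$, and so the very same argument applied with $z$ replaced by $x$ gives $\rmdist(x, x_0+L) \leq 8r\, \delta_X^{-1}(\xi(r))$. Because $x+L$ and $x_0+L$ are parallel affine lines separated by exactly $\rmdist(x, x_0+L)$, the triangle inequality yields
\begin{equation*}
\rmdist(z, x+L) \leq \rmdist(z, x_0+L) + \rmdist(x, x_0+L) \leq 16r\, \delta_X^{-1}(\xi(r)),
\end{equation*}
which is (G). The main technical point is the single invocation of Proposition \ref{excess.length}; everything else is routine bookkeeping, and the strong bound $\xi(r) \leq \delta_X(1/32)$ beyond merely $\xi(r) \leq 1$ probably enters only for subsequent applications of the proposition downstream. (A remark: the stated right-hand side reads $16(\delta_X^{-1}\circ\xi)(r)$, which appears to be a typo for $16r\,(\delta_X^{-1}\circ\xi)(r)$, consistent with the dimensional scaling announced in the introduction.)
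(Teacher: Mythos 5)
Your proof is correct and follows essentially the same route as the paper's: (F) via the competitor $(C\setminus B(x,r))\cup\blseg x_0,x_1\brseg$ combined with the lower bound $2r\leq\calH^1(\Gamma)$ from Theorem \ref{local.topology}(D) and Eilenberg; (G) via Proposition \ref{excess.length} applied to the triangle $x_0,z,x_1$ together with the almost-minimizing estimate, giving $\rmdist(z,x_0+L)\leq 8r(\delta_X^{-1}\circ\xi)(r)$, and then doubling the bound by applying it again to $z=x$ to pass from $x_0+L$ to $x+L$. The only cosmetic differences are that the paper observes $\|x_0-z\|+\|z-x_1\|\leq 4r$ directly from $x_0,z,x_1\in B(x,r)$, and that you invoke \ref{Ahlfors}(B) on the two subarcs to get $\ell\leq\calH^1(\Gamma)$ where the paper writes the same chain with $\calH^1(\Gamma_0)+\calH^1(\Gamma_1)$ explicitly; these are the same estimates. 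Your observation that the stated conclusion (G) is missing a factor of $r$ is also correct: the paper's own proof sets $\eta=8r(\delta_X^{-1}\circ\xi)(r)$ and concludes with $2\eta=16r(\delta_X^{-1}\circ\xi)(r)$, so the displayed inequality in (G) should indeed read $\rmdist(z,x+L)\leq 16r\,(\delta_X^{-1}\circ\xi)(r)$, consistent with the scale-invariant version quoted in the remark following the proposition and with the use of the quantity $\bveps(r_j)\rho_{x,j}$ in the proof of Theorem \ref{mainregth}.
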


\begin{Remark}
Under the same assumptions one can in fact show that
\begin{equation*}
\frac{1}{r} \rmdist_\calH \big[ C \cap B(x,r) , (x+L) \cap B(x,r) \big] \leq 80 (\delta_X^{-1} \circ \xi)(r) \,,
\end{equation*}
but only the weaker version (G) will be used in the proof of Theorem \ref{mainregth}.
\end{Remark}

\begin{proof}[Proof of Proposition \ref{keylemma}.]
We start by observing that
\begin{equation}
\label{eq.15}
2r \leq \calH^1(C \cap B(x,r)) \leq (1 + \xi(r)) \|x_1-x_0 \| \,.
\end{equation}
The first inequality results from \ref{local.topology}(D) as in the proof of \ref{density.first}(C), and the second inequality follows from the almost minimizing property of $C$ together with the fact that
\begin{equation*}
C' = \left( C \setminus B(x,r) \right) \cup \blseg x_0 , x_1 \brseg
\end{equation*}
is a competitor for $C$ in $B(x,r)$, according to \ref{prop.comp}. This proves conclusion (F).
\par
Let $z \in C \cap B(x,r)$ and define
\begin{equation*}
h_z = \rmdist(z , x_0 + L) \,.
\end{equation*}
Notice that $\|x_0-z\| + \|z-x_1\| \leq 4r$, name $\Gamma$ the Lipschitz curve $C \cap B(x,r)$, and write $\Gamma_0$ (resp. $\Gamma_1$) for the subcurve of $\Gamma$ with endpoints $x_0$ and $z$ (resp. $z$ and $x_1$). It ensues from \ref{excess.length} that
\begin{equation*}
\begin{split}
\|x_0-x_1\| \left( 1 + \delta_X \left( \frac{h_z}{8r} \right) \right) & \leq \|x_0-z\| + \|z-x_1\| \\
& \leq \calH^1(\Gamma_0) + \calH^1(\Gamma_1) \\
& = \calH^1(C \cap B(x,r)) \\
& \leq (1 + \xi(r)) \|x_1 -x_0\|\,
\end{split}
\end{equation*}
where the last inequality follows from \eqref{eq.15}. Therefore
\begin{equation*}
\delta_X \left( \frac{h_z}{8r} \right) \leq \xi(r) \,,
\end{equation*}
and in turn,
\begin{equation}
\label{eq.16}
\frac{h_z}{r} \leq 8 (\delta_X^{-1} \circ \xi)(r) \,,
\end{equation}
recall \ref{UR}. 
\par 
We abbreviate $\eta = 8r (\delta_X^{-1} \circ \xi)(r)$. So far we showed that given $z \in C \cap B(x,r)$, there is $v_z \in L$ such that $\|z - (v_z+x_0)\| \leq \eta$. As $x \in C \cap B(x,r)$, there exists $v_x \in L$ such that $\|x - (v_x+x_0) \| \leq \eta$. Therefore $\|z - (v_z-v_x+x)\| \leq \|z-(v_z+x_0)\|+\|x-(v_x+x_0)\| \leq 2\eta$ and the proof of (G) is complete.
\end{proof}

In the following we use the terminology {\em universal constant} for real numbers that do not depend on the data ($X$, $C$, $\xi$ etc).

\begin{Theorem}[$C^1$ regularity]
\label{mainregth}
There are universal constants $\boldeta > 0$ and $\bC > 0$ with the following property.
Assume that:
\begin{enumerate}
\item[(A)] $C \subset X$ is compact, connected, $U \subset X$ is open, $r_0 > 0$, $x_0 \in C$, $B(x_0,r_0) \subset U$;
\item[(B)] $\xi$ is a gauge and {\bf the gauge $\delta_X^{-1} \circ \xi$ is Dini};
\item[(C)] $C$ is $(\xi,r_0)$ almost minimizing in $U$;
\item[(D)] $\exp[\zeta(r_0)] \leq 1 + \boldeta$ where $\zeta$ is the mean slope of $\xi$;
\item[(E)] $(\delta_X^{-1} \circ \xi)(r_0) \leq \boldeta$;
\item[(F)] $\calH^1(C \cap B(x_0,r_0)) \leq (1+\boldeta)2r_0$.
\end{enumerate}
It follows that $C \cap B(x_0,\boldeta r_0)$ is a $C^1$ curve $\Gamma$. Furthermore if $\gamma$ is an arclength parametrization of $\Gamma$ then 
\begin{equation*}
\rmosc(\gamma' ; \eta) \leq \bC \omega( \bC \eta)
\end{equation*}
where $\omega$ is the mean slope of the Dini gauge $\delta_X^{-1} \circ \xi$.
\end{Theorem}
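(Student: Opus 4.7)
The plan follows the outline sketched in the introduction. At each $x$ near $x_0$ we will produce via Theorem \ref{lip.reg} a sequence of ``good radii'' $R_0>R_1>\cdots$ in near-geometric progression at which $C\cap B(x,R_j)$ is a Lipschitz curve $\Gamma_j$ with endpoints $x_j^\pm\in\rmbdry B(x,R_j)$; we let $L_j=x_j^-+\R(x_j^+-x_j^-)$ and $v_j$ a unit direction vector for $L_j$. The height bound of Proposition \ref{keylemma} will control the drift $\|v_{j+1}-v_j\|$ by a multiple of $(\delta_X^{-1}\circ\xi)(R_j)$, and the Dini hypothesis (B) will make the resulting telescoping series summable, yielding both a pointwise tangent direction $v_x$ and the quantitative modulus estimate on $\gamma'$.

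First I would choose $\boldeta$ small enough that, for every $x\in C\cap B(x_0,\boldeta r_0)$ and every $r\leq r_0/2$, the almost monotonicity Theorem \ref{monotonicity} combined with hypothesis (F) (applied at scale $r_0/2$ via the inclusion $B(x,r_0/2)\subset B(x_0,r_0)$) yields
\[
 \exp[\zeta(r)]\,\frac{\calH^1(C\cap B(x,r))}{2r}\leq \frac{(1+\boldeta)^2}{1-\boldeta}<\frac{3}{2}.
\]
Letting $r\downarrow 0$ and invoking the dichotomy \ref{density.dichotomy} forces $\Theta^1(\calH^1\hel C,x)=1$, so $x\in\rmreg(C)$ by Theorem \ref{lip.reg.bis}. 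Eilenberg's inequality then supplies arbitrarily small scales with $\rmcard(C\cap\rmbdry B(x,r))=2$ at which, upon shrinking $\boldeta$ further to force hypotheses (D) and (E) of Theorem \ref{lip.reg} with $\tau=1/6$, iteration produces the desired sequence with $R_{j+1}\in[\tau R_j/2,\tau R_j]$.

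At each scale Proposition \ref{keylemma} gives $\max_{z\in\Gamma_j}\rmdist(z,L_j)\leq h_j:=16R_j(\delta_X^{-1}\circ\xi)(R_j)$. Since $\Gamma_{j+1}\subset\Gamma_j$, both $x_{j+1}^\pm$ lie within $h_j$ of $L_j$, while $\|x_{j+1}^+-x_{j+1}^-\|\geq(1-\xi(R_{j+1}))\,2R_{j+1}\geq \tau R_j/2$; an elementary computation then yields $\|v_{j+1}-v_j\|\leq C\tau^{-1}(\delta_X^{-1}\circ\xi)(R_j)$ up to sign, with $C$ universal. Thanks to the geometric decay of $R_j$, the comparison test in \ref{gauges} turns the Dini hypothesis into $\sum_{j\geq k}(\delta_X^{-1}\circ\xi)(R_j)\leq \bC\,\omega(\bC R_k)$; hence $\{v_j\}$ is Cauchy in $S_X$ with a limit $v_x$, and telescoping produces both $\|v_j-v_x\|\leq \bC\,\omega(\bC R_j)$ and the Hausdorff estimate $\rmdist_\calH(\Gamma_j,(x+\R v_x)\cap B(x,R_j))\leq \bC R_j\,\omega(\bC R_j)$.

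The last step promotes the pointwise tangent $v_x$ to a continuous field on $C\cap B(x_0,\boldeta r_0/2)$ with modulus $\omega$. Given $x,y$ in this ball, choose $j$ so that $R_j(x)$ is comparable to $\|x-y\|$; since $y\in\Gamma_j(x)$ lies within $\bC R_j\,\omega(\bC R_j)$ of $L_x$, running the secant comparison at the scale $R_0(y)\sim R_j(x)$ with $L_j$ replaced by a translate of $x+\R v_x$ yields $\|v_x-v_y\|\leq \bC\,\omega(\bC\|x-y\|)$. Parametrizing the Lipschitz curve $C\cap B(x_0,\boldeta r_0)$ by arclength $\gamma$ (which exists at this scale by running the construction at $x_0$), the height bound forces $\gamma'(s)=\pm v_{\gamma(s)}$ wherever $\gamma'$ exists; the continuity of $x\mapsto v_x$ then extends $\gamma'$ continuously everywhere and delivers the stated oscillation bound. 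The main obstacle is the quantitative passage from the Banach-space height bound to the direction comparison $\|v_{j+1}-v_j\|$: this is where the non-Hilbertian geometry of $X$ enters nontrivially, and it is precisely why the Dini condition must be imposed on $\delta_X^{-1}\circ\xi$ rather than on $\xi$ alone.
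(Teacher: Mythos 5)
Your plan is correct in its essentials and follows the same overall mechanism as the paper: propagating good radii in near-geometric progression via Theorem \ref{lip.reg}, bounding the drift of the secant via the height estimate of Proposition \ref{keylemma}, and turning the Dini hypothesis on $\delta_X^{-1}\circ\xi$ into summability via \ref{gauges}. The finishing move, however, is genuinely different. You control the unit secant \emph{directions} $v_j$ and argue Cauchyness of $\{v_j\}$, then conclude $C^1$ regularity by (i) invoking a.e.\ existence of the strong derivative $\gamma'$, (ii) pinning $\gamma'(s)=\pm v_{\gamma(s)}$ where it exists, and (iii) using continuity of the tangent field to extend $\gamma'$ everywhere. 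Step (i) uses that $X$ has the Radon--Nikodym property (true here since uniformly convex spaces are reflexive), which is a legitimate but extraneous input. The paper instead proves Cauchyness of the \emph{difference quotients} $\triangle_\gamma(s_x,h)=(\gamma(s_x+h)-\gamma(s_x))/h$ over the dyadic intervals $[h^-_{x,j},h^-_{x,j+1}]\cup[h^+_{x,j+1},h^+_{x,j}]$, yielding differentiability at every parameter value from scratch, with no appeal to a.e.\ differentiability or to RNP. To make that route work, the paper needs the auxiliary estimate $\|\triangle_\gamma(s_x,h)\|\geq 1-C_0\xi(r_j)$ (its Claim \#4), which your plan bypasses via the a.e.\ metric-derivative fact. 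Two points you elide but that consume real effort in the paper: the phrase ``up to sign'' in your bound $\|v_{j+1}-v_j\|\leq C\tau^{-1}(\delta_X^{-1}\circ\xi)(R_j)$ hides the sign-tracking of Claims \#3 and \#5 (showing $t^+_{x,j}>0$, $t^-_{x,j}<0$, and that $t_{x,h,j}$ keeps a coherent sign along $\Gamma_{x,j}$), and your passage from the pointwise limit $v_x$ to the modulus-$\omega$ continuity of $x\mapsto v_x$ (``running the secant comparison at scale $R_0(y)\sim R_j(x)$ with $L_j$ replaced by a translate of $x+\R v_x$'') needs a careful chain-of-scales argument that the paper avoids by comparing $\gamma'(s_1)$ and $\gamma'(s_2)$ directly through the identity $\triangle_\gamma(s_1,h)=\triangle_\gamma(s_2,-h)$. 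These are fillable gaps rather than flaws, so the plan stands, but the paper's difference-quotient route is the more elementary and self-contained of the two.
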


\begin{Remark}
\label{55}
Several comments are in order.
\begin{enumerate}
\item[(A)] Since $\delta_X(\veps) \leq  \veps^2$ for every $0 < \veps < 2$, see \cite[p. 63]{LINDENSTRAUSS.TZAFRIRI.II}, we infer that $t \leq \sqrt{t} \leq  \delta_X^{-1}(t)$ whenever $0 < t \leq 1$. Thus for any gauge $\xi$ and any $t$ such that $\xi(t) \leq 1$ one has
\begin{equation*}
\xi(t) \leq \delta_X^{-1}(\xi(t)) \,,
\end{equation*}
 and it immediately follows from the definition that $\xi$ is a Dini gauge whenever $\delta_X^{-1} \circ \xi$ is Dini. In particular hypothesis (D) makes sense. In our statement $\zeta$ is the mean slope of $\xi$ and $\omega$ is the mean slope of $\delta_X^{-1} \circ \xi$.
\item[(B)] Under the assumptions of the Theorem, if $0 < \tau < 1$, $x \in C \cap B(x_0,\tau r_0)$ and $0 < r \leq (1-\tau)r_0$ then \ref{monotonicity} (almost monotonicity) implies that
\begin{equation*}
\begin{split}
\frac{\calH^1(C \cap B(x,r))}{2r} & \leq \exp[ \zeta((1-\tau)r_0 ] \frac{ \calH^1(C \cap B(x,(1-\tau)r_0))}{2(1-\tau)r_0} \\
& \leq \exp[ \zeta(r_0) ] \frac{\calH^1(C \cap B(x_0,r_0))}{2r_0} \left( \frac{1}{1-\tau} \right) \\
& \leq \frac{(1+\boldeta)^2}{1-\tau} \,.
\end{split}
\end{equation*}
In particular, when $\tau$ is small, a version of hypothesis (F) holds (with a slightly worse constant that $\boldeta$) for $x$ close to $x_0$ and $r$ sligthly smaller than $r_0$. We will refer to this observation in the core of the proof.
\item[(C)] It is useful to notice that the Theorem applies at all. In fact, if $C \subset X$ is compact, connected and $\calH^1(C) < \infty$, and if $\xi$ is a Dini gauge, then for $\calH^1$ almost every $x_0 \in C$ there exists $r_0=r_0(x)>0$ such that assumptions (D), (E) and (F) are verified. Of course assumptions (D) and (E) are satisfied for $r_0$ small enough independently of $x_0$, whereas assumption (F) follows from the rectifiability of $C$ \cite[Theorem 4.4.8.]{AMBROSIO.TILLI} which implies that $\Theta^1(\calH^1 \hel C , x_0) = 1$ for $\calH^1$ almost every $x_0 \in C$, see e.g. \cite{KIR.94}. Thus the only nontrivial assumption of the Theorem, apart from the almost minimizing property of $C$, is that $\delta_X^{-1} \circ \xi$ be Dini.
\item[(D)] Let us spell out the kind of regularity obtained in case $\xi(r) \leq C r^\alpha$, $0 < \alpha \leq 1$, and $X = \bL_p$, $1 < p < \infty$. Here we consider an $\bL_p$ space relative to any measure space, and we point out that the $(\xi,r_0)$ almost minimizing property is verified by a solution of the variational problem $(\calP_{F,w})$ (see near the end of section \ref{sec.3}) provided the weight $w$ is H\"older continuous of exponent $\alpha$ (see \ref{lenghtminimiz}). One infers from \cite[p.63]{LINDENSTRAUSS.TZAFRIRI.II} that $\delta_{\bL_p}^{-1}(\veps) \leq C_p \veps^{1/\max\{2,p\}}$. Thus the gauge $\delta_{\bL_p}^{-1} \circ \xi$ is geometric and it follows that $\mathring{\Gamma}$ is $C^{1,\alpha/\max\{2,p\}}$.
\end{enumerate}
\end{Remark}

\begin{proof}[Proof of Theorem \ref{mainregth}.]
We say a pair $(x,r) \in C \times (\R^+\setminus \{0\})$ is {\em good} if $C \cap B(x,r)$ is a Lipschitz curve $\Gamma_{x,r}$ and if $C \cap \rmbdry B(x,r)$ consists exactly of the endpoints of $\Gamma_{x,r}$. 
\par 
{\em{\sc Claim \#1.} For every $x \in C \cap B(x_0,r_0/6)$ and every $0 < r \leq (5/6)r_0$, there exists $r/144 \leq \rho \leq r/6$ such that $(x,\rho)$ is good.}
\par 
We first notice that
\begin{equation*}
\calH^1(C \cap B(x,(5/6)r_0)) \leq \calH^1( C \cap B(x_0,r_0)) \leq (1+1/6)2r_0 \,,
\end{equation*}
according to hypothesis (F). Thus we infer from hypothesis (D) and \ref{monotonicity} (almost monotonicity) that
\begin{multline*}
\frac{\calH^1(C \cap B(x,r))}{2r} \leq \exp[\zeta(r)] \frac{\calH^1(C \cap B(x,r))}{2r} \\ \leq \exp[ \zeta((5/6)r_0) ] \frac{ \calH^1( C \cap B(x,(5/6)r_0))}{2(5/6)r_0} < \left( 1 + \frac{1}{24} \right) \left( \frac{7}{5} \right) = \frac{3}{2} - \frac{1}{24} \,.
\end{multline*}
We next define an $\calL^1$ measurable set $G = [0,r] \cap \{ \rho : \rmcard C \cap \rmbdry B(x,\rho) = 2 \}$, recall \ref{eilenberg}. It follows from \ref{local.topology}(D) and Eilenberg's inequality that
\begin{multline*}
2 \calL^1(G) + 3 ( r - \calL^1(G)) \leq \int_0^r (\rmcard C \cap \rmbdry B(x,\rho)) d\calL^1(\rho)\\ \leq \calH^1(C \cap B(x,r)) < \left( \frac{3}{2} - \frac{1}{24} \right) 2r \,.
\end{multline*}
It readily follows that $\calL^1(G) > r/12$. Pick $r' \in G$ with $r' \geq r/12$ and apply \ref{lip.reg} (with $\tau=1/6$).\cqfd
\par 
We apply {\sc Claim \#1} to $x=x_0$ and $r=(5/6)r_0$ to find some $\rho_0$ such that
\begin{equation*}
\frac{1}{144} \left( \frac{5r_0}{6} \right) \leq \rho_0 \leq \frac{1}{6} \left( \frac{5r_0}{6} \right) 
\end{equation*}
and $C \cap B(x_0,\rho_0)$ is a Lipschitz curve $\Gamma_0$.
\par 
We define $r_j = 72^{-j}$, $j \in \N$, and for the remaining part of this proof we assume $j \geq j_0$ where $j_0$ is chosen sufficiently large for $r_{j_0} \leq (5/6)r_0$. For such $r_j$ and $x \in C \cap B(x_0,\rho_0)$, {\sc Claim \#1} applies to yield $\rho_{x,j}$ such that
\begin{equation}
\label{eq.220}
\frac{r_{j+1}}{2} = \frac{r_j}{144} \leq \rho_{x,j} \leq \frac{r_j}{6} = 12 r_{j+1} 
\end{equation}
and $C \cap B(x,\rho_{x,j})$ is a Lipschitz curve $\Gamma_{x,j}$ whose two endpoints coincide with $C \cap \rmbdry B(x,\rho_{x,j})$. We easily infer that
\begin{equation}
\label{eq.21}
3 \leq \frac{\rho_{x,j}}{\rho_{x,j+1}} \leq 1728 \,.
\end{equation}
\par 
We parametrize $\Gamma_0$ by arclength $\gamma : [a,b] \to X$. Corresponding to each good pair $(x,\rho_{x,j})$ obtained above, we notice $\Gamma_{x,j} \subset \Gamma_0$, and therefore 
\begin{equation*}
\Gamma_{x,j} = \gamma ([s_x + h^-_{x,j},s_x + h^+_{x,j}])
\end{equation*}
 where $a < s_x < b$ is so that $x = \gamma(s_x)$ and $h^-_{x,j} < 0 < h^+_{x,j}$.
 \par 
 {\em{\sc Claim \#2.} One has
 \begin{equation}
 \label{eq.200}
 \rho_{x,j} \leq | h^\pm_{x,j}| \leq (1+2\xi(r_j))\rho_{x,j}
 \end{equation}
 and
 \begin{equation}
 \label{eq.201}
 3/2 \leq \frac{|h^\pm_{x,j}|}{|h^\pm_{x,j+1}|} \leq 3456 \,. 
 \end{equation}
 }
 \par 
The first of the four (set of) inequalities simply follows from $\rmLip \gamma \leq 1$,
\begin{equation*}
\rho_{x,j} = \| \gamma(s_x+h^\pm_{x,j})-\gamma(s_x)\| \leq |h^\pm_{x,j} |\,.
\end{equation*}
 \par 
 We next infer from the almost minimizing property of $C$ and the fact that $(x,\rho_{x,j})$ is a good pair,
 \begin{multline*}
| h^-_{x,j}| + |h^+_{x,j}| = \calH^1(\Gamma^-_{x,j}) + \calH^1(\Gamma^+_{x,j}) = \calH^1(\Gamma_{x,j}) \\ = \calH^1(C \cap B(x,\rho_{x,j})) \leq (1+ \xi(\rho_{x,j})) 2 \rho_{x,j} \,.
 \end{multline*}
 The second inequality ensues easily from this and from the first inequality.
 \par 
 The third and fourth inequalities are consequences of \eqref{eq.21} and of $|h^\pm_{x,j}| \leq 2 \rho_{x,j}$ which itself follows from the second inequality and $\xi(r_j) \leq 1/2$.\cqfd
\par 
Next we apply \ref{keylemma} to each good pair $(x,\rho_{x,j})$, $x \in C \cap B(x_0,\rho_0)$. This provides us with a 1 dimensional linear subspace $L_{x,j} \subset X$ such that
\begin{equation*}
\max \{ \rmdist( z , x + L_{x,j} ) : z \in \Gamma_{x,j} \} \leq \bveps(r_j)\rho_{x,j} \,,
\end{equation*}
where we have abbreviated
\begin{equation*}
\bveps(r) = 16(\delta_X^{-1} \circ \xi)(r) \,.
\end{equation*}
It is useful to recall that $\xi \leq \bveps$, cf. \ref{55}. Associated with each $h \in [h^-_{x,j},h^-_{x,j+1}] \cup [h^+_{x,j+1},h^+_{x,j}]$ we choose $v_{x,h,j} \in L_{x,j}$ such that
\begin{equation}
\label{eq.202}
\| \gamma(s_x+h)-x - v_{x,h,j}\| \leq \bveps(r_j) \rho_{x,j} \,.
\end{equation}
We choose a unit vector $w_{x,j} \in X$ spanning $L_{x,j}$, and $t_{x,h,j} \in \R$ such that $v_{x,h,j} =t_{x,h,j} w_{x,j}$. Replacing $w_{x,j}$ by $-w_{x,j}$ if necessary, we  also assume that $t_{x,h^+_{x,j},j} \geq 0$.
In the remaining part of this proof we will also use the following abbreviations:
\begin{gather*}
v^\pm_{x,j} = v_{x,h^\pm_{x,j},j} \\
t^\pm_{x,j} =t_{x,h^\pm_{x,j},j} \,.
\end{gather*}
\par

\begin{center}
\begin{figure}[!h]
\scalebox{1} 
{
\begin{pspicture}(0,-3.5319905)(13.5,3.5319905)
\definecolor{color190}{rgb}{0.6,0.6,0.6}
\rput{-30.0}(0.9297837,3.47){\psellipse[linewidth=0.04,dimen=outer](6.94,0.0)(3.8,2.87)}
\usefont{T1}{ptm}{m}{n}
\rput(8.53,3.235){$B(x,\rho_{x,j})$}
\psdots[dotsize=0.12](6.96,-0.11)
\psline[linewidth=0.03cm](1.04,-0.11)(11.92,-0.09)
\psline[linewidth=0.02cm](1.04,0.89)(12.2,0.89)
\psline[linewidth=0.02cm](2.22,-1.11)(12.3,-1.13)
\usefont{T1}{ptm}{m}{n}
\rput(11.85,0.175){$L_{x,j}$}
\psline[linewidth=0.04cm,arrowsize=0.05291667cm 2.0,arrowlength=1.4,arrowinset=0.4,tbarsize=0.07055555cm 5.0]{<-|}(12.08,-1.73)(11.28,-1.73)
\usefont{T1}{ptm}{m}{n}
\rput(11.79,-2.145){$w_{x,j}$}
\usefont{T1}{ptm}{m}{n}
\rput(11.96,-0.645){$C$}
\psdots[dotsize=0.12](3.68,-0.57)
\psdots[dotsize=0.12](10.52,-0.75)
\psdots[dotsize=0.12](10.22,-0.09)
\usefont{T1}{ptm}{m}{n}
\rput(11.17,1.715){$v_{x,j}^+=t_{x,j}^+w_{x,j}$}
\psline[linewidth=0.02cm,linecolor=color190,arrowsize=0.05291667cm 2.0,arrowlength=1.4,arrowinset=0.4]{->}(10.46,1.35)(10.24,0.09)
\usefont{T1}{ptm}{m}{n}
\rput(8.42,-2.005){$\gamma(s_x+h_{x,j}^+)$}
\psline[linewidth=0.02cm,linecolor=color190,arrowsize=0.05291667cm 2.0,arrowlength=1.4,arrowinset=0.4]{->}(9.26,-1.67)(10.38,-0.93)
\psline[linewidth=0.02cm,tbarsize=0.07055555cm 5.0]{|*-|*}(0.86,0.89)(0.88,-0.13)
\usefont{T1}{ptm}{m}{n}
\rput(1.72,0.375){$\bveps(r_j)\rho_{x,j}$}
\usefont{T1}{ptm}{m}{n}
\rput(7.22,0.415){$x=\gamma(s_x)$}
\psdots[dotsize=0.12](3.2,-0.11)
\usefont{T1}{ptm}{m}{n}
\rput(2.29,1.815){$v_{x,j}^-=t_{x,j}^-w_{x,j}$}
\psline[linewidth=0.02cm,linecolor=color190,arrowsize=0.05291667cm 2.0,arrowlength=1.4,arrowinset=0.4]{->}(1.66,1.49)(3.06,0.03)
\usefont{T1}{ptm}{m}{n}
\rput(3.22,-2.105){$\gamma(s_x+h_{x,j}^-)$}
\psline[linewidth=0.02cm,linecolor=color190,arrowsize=0.05291667cm 2.0,arrowlength=1.4,arrowinset=0.4]{->}(3.18,-1.85)(3.68,-0.79)
\psline[linewidth=0.04cm,arrowsize=0.05291667cm 2.0,arrowlength=1.4,arrowinset=0.4]{<->}(0.86,0.87)(0.88,-0.15)
\pscustom[linewidth=0.04]
{
\newpath
\moveto(1.76,-0.85)
\lineto(1.84,-0.79)
\curveto(1.88,-0.76)(1.935,-0.71)(1.95,-0.69)
\curveto(1.965,-0.67)(2.005,-0.635)(2.03,-0.62)
\curveto(2.055,-0.605)(2.11,-0.59)(2.14,-0.59)
\curveto(2.17,-0.59)(2.23,-0.59)(2.26,-0.59)
\curveto(2.29,-0.59)(2.355,-0.6)(2.39,-0.61)
\curveto(2.425,-0.62)(2.48,-0.645)(2.5,-0.66)
\curveto(2.52,-0.675)(2.57,-0.705)(2.6,-0.72)
\curveto(2.63,-0.735)(2.68,-0.76)(2.7,-0.77)
\curveto(2.72,-0.78)(2.755,-0.8)(2.77,-0.81)
\curveto(2.785,-0.82)(2.825,-0.835)(2.85,-0.84)
\curveto(2.875,-0.845)(2.93,-0.85)(2.96,-0.85)
\curveto(2.99,-0.85)(3.045,-0.85)(3.07,-0.85)
\curveto(3.095,-0.85)(3.14,-0.845)(3.16,-0.84)
\curveto(3.18,-0.835)(3.225,-0.82)(3.25,-0.81)
\curveto(3.275,-0.8)(3.32,-0.78)(3.34,-0.77)
\curveto(3.36,-0.76)(3.4,-0.735)(3.42,-0.72)
\curveto(3.44,-0.705)(3.48,-0.68)(3.5,-0.67)
\curveto(3.52,-0.66)(3.56,-0.64)(3.58,-0.63)
\curveto(3.6,-0.62)(3.635,-0.6)(3.65,-0.59)
\curveto(3.665,-0.58)(3.7,-0.555)(3.72,-0.54)
\curveto(3.74,-0.525)(3.775,-0.5)(3.79,-0.49)
\curveto(3.805,-0.48)(3.83,-0.455)(3.84,-0.44)
\curveto(3.85,-0.425)(3.875,-0.395)(3.89,-0.38)
\curveto(3.905,-0.365)(3.94,-0.335)(3.96,-0.32)
\curveto(3.98,-0.305)(4.015,-0.275)(4.03,-0.26)
\curveto(4.045,-0.245)(4.08,-0.205)(4.1,-0.18)
\curveto(4.12,-0.155)(4.16,-0.11)(4.18,-0.09)
\curveto(4.2,-0.07)(4.23,-0.03)(4.24,-0.01)
\curveto(4.25,0.01)(4.28,0.05)(4.3,0.07)
\curveto(4.32,0.09)(4.355,0.12)(4.37,0.13)
\curveto(4.385,0.14)(4.415,0.165)(4.43,0.18)
\curveto(4.445,0.195)(4.48,0.225)(4.5,0.24)
\curveto(4.52,0.255)(4.56,0.28)(4.58,0.29)
\curveto(4.6,0.3)(4.64,0.325)(4.66,0.34)
\curveto(4.68,0.355)(4.715,0.38)(4.73,0.39)
\curveto(4.745,0.4)(4.78,0.415)(4.8,0.42)
\curveto(4.82,0.425)(4.86,0.435)(4.88,0.44)
\curveto(4.9,0.445)(4.955,0.455)(4.99,0.46)
\curveto(5.025,0.465)(5.08,0.465)(5.1,0.46)
\curveto(5.12,0.455)(5.165,0.445)(5.19,0.44)
\curveto(5.215,0.435)(5.26,0.42)(5.28,0.41)
\curveto(5.3,0.4)(5.34,0.38)(5.36,0.37)
\curveto(5.38,0.36)(5.435,0.335)(5.47,0.32)
\curveto(5.505,0.305)(5.575,0.275)(5.61,0.26)
\curveto(5.645,0.245)(5.695,0.22)(5.71,0.21)
\curveto(5.725,0.2)(5.76,0.18)(5.78,0.17)
\curveto(5.8,0.16)(5.84,0.14)(5.86,0.13)
\curveto(5.88,0.12)(5.915,0.1)(5.93,0.09)
\curveto(5.945,0.08)(5.98,0.065)(6.0,0.06)
\curveto(6.02,0.055)(6.07,0.05)(6.1,0.05)
\curveto(6.13,0.05)(6.185,0.05)(6.21,0.05)
\curveto(6.235,0.05)(6.29,0.05)(6.32,0.05)
\curveto(6.35,0.05)(6.41,0.06)(6.44,0.07)
\curveto(6.47,0.08)(6.53,0.09)(6.56,0.09)
\curveto(6.59,0.09)(6.64,0.08)(6.66,0.07)
\curveto(6.68,0.06)(6.72,0.045)(6.74,0.04)
\curveto(6.76,0.035)(6.795,0.015)(6.81,0.0)
\curveto(6.825,-0.015)(6.86,-0.04)(6.88,-0.05)
\curveto(6.9,-0.06)(6.94,-0.09)(6.96,-0.11)
\curveto(6.98,-0.13)(7.015,-0.165)(7.03,-0.18)
\curveto(7.045,-0.195)(7.075,-0.225)(7.09,-0.24)
\curveto(7.105,-0.255)(7.145,-0.295)(7.17,-0.32)
\curveto(7.195,-0.345)(7.26,-0.39)(7.3,-0.41)
\curveto(7.34,-0.43)(7.41,-0.46)(7.44,-0.47)
\curveto(7.47,-0.48)(7.52,-0.5)(7.54,-0.51)
\curveto(7.56,-0.52)(7.6,-0.535)(7.62,-0.54)
\curveto(7.64,-0.545)(7.675,-0.56)(7.69,-0.57)
\curveto(7.705,-0.58)(7.745,-0.59)(7.77,-0.59)
\curveto(7.795,-0.59)(7.85,-0.59)(7.88,-0.59)
\curveto(7.91,-0.59)(7.965,-0.59)(7.99,-0.59)
\curveto(8.015,-0.59)(8.06,-0.585)(8.08,-0.58)
\curveto(8.1,-0.575)(8.14,-0.565)(8.16,-0.56)
\curveto(8.18,-0.555)(8.215,-0.535)(8.23,-0.52)
\curveto(8.245,-0.505)(8.285,-0.475)(8.31,-0.46)
\curveto(8.335,-0.445)(8.38,-0.42)(8.4,-0.41)
\curveto(8.42,-0.4)(8.47,-0.385)(8.5,-0.38)
\curveto(8.53,-0.375)(8.595,-0.365)(8.63,-0.36)
\curveto(8.665,-0.355)(8.73,-0.35)(8.76,-0.35)
\curveto(8.79,-0.35)(8.85,-0.355)(8.88,-0.36)
\curveto(8.91,-0.365)(8.965,-0.375)(8.99,-0.38)
\curveto(9.015,-0.385)(9.065,-0.4)(9.09,-0.41)
\curveto(9.115,-0.42)(9.16,-0.44)(9.18,-0.45)
\curveto(9.2,-0.46)(9.24,-0.48)(9.26,-0.49)
\curveto(9.28,-0.5)(9.315,-0.525)(9.33,-0.54)
\curveto(9.345,-0.555)(9.38,-0.585)(9.4,-0.6)
\curveto(9.42,-0.615)(9.46,-0.64)(9.48,-0.65)
\curveto(9.5,-0.66)(9.54,-0.675)(9.56,-0.68)
\curveto(9.58,-0.685)(9.62,-0.695)(9.64,-0.7)
\curveto(9.66,-0.705)(9.705,-0.715)(9.73,-0.72)
\curveto(9.755,-0.725)(9.81,-0.73)(9.84,-0.73)
\curveto(9.87,-0.73)(9.925,-0.73)(9.95,-0.73)
\curveto(9.975,-0.73)(10.025,-0.73)(10.05,-0.73)
\curveto(10.075,-0.73)(10.125,-0.73)(10.15,-0.73)
\curveto(10.175,-0.73)(10.225,-0.735)(10.25,-0.74)
\curveto(10.275,-0.745)(10.325,-0.75)(10.35,-0.75)
\curveto(10.375,-0.75)(10.44,-0.75)(10.48,-0.75)
\curveto(10.52,-0.75)(10.58,-0.745)(10.6,-0.74)
\curveto(10.62,-0.735)(10.66,-0.725)(10.68,-0.72)
\curveto(10.7,-0.715)(10.745,-0.71)(10.77,-0.71)
\curveto(10.795,-0.71)(10.845,-0.715)(10.87,-0.72)
\curveto(10.895,-0.725)(10.94,-0.74)(10.96,-0.75)
\curveto(10.98,-0.76)(11.025,-0.775)(11.05,-0.78)
\curveto(11.075,-0.785)(11.115,-0.8)(11.13,-0.81)
\curveto(11.145,-0.82)(11.185,-0.83)(11.21,-0.83)
\curveto(11.235,-0.83)(11.28,-0.84)(11.3,-0.85)
\curveto(11.32,-0.86)(11.36,-0.875)(11.38,-0.88)
\curveto(11.4,-0.885)(11.445,-0.895)(11.47,-0.9)
\curveto(11.495,-0.905)(11.545,-0.91)(11.57,-0.91)
\curveto(11.595,-0.91)(11.655,-0.91)(11.69,-0.91)
\curveto(11.725,-0.91)(11.79,-0.91)(11.82,-0.91)
\curveto(11.85,-0.91)(11.91,-0.91)(11.94,-0.91)
\curveto(11.97,-0.91)(12.035,-0.91)(12.07,-0.91)
\curveto(12.105,-0.91)(12.165,-0.91)(12.19,-0.91)
\curveto(12.215,-0.91)(12.255,-0.905)(12.3,-0.89)
}
\usefont{T1}{ptm}{m}{n}
\rput(4.14,0.435){$\Gamma_{x,j}$}
\end{pspicture} 
}
\caption{Notation of the proof of differentiability.}
\end{figure}
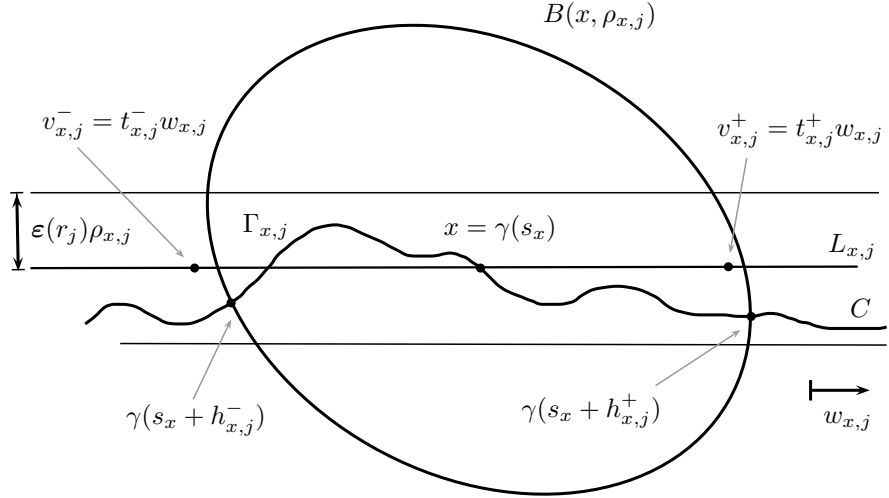
\end{center}

{\em{\sc Claim \#3.}The following hold:
\begin{equation}
\label{eq.203}
\left| |t^\pm_{x,j} | - \rho_{x,j} \right|\leq \bveps(r_j) \rho_{x,j} \,,
\end{equation}
and $t^+_{x,j} > 0$ as well as $t^-_{x,j} < 0$.
}
\par 
Since $\| \gamma(s_x+h^\pm_{x,j}) - x\| = \rho_{x,j}$ the first inequality is an immediate consequence of \eqref{eq.202}. In order to determine the signs of the $t^\pm_{x,j}$ we proceed as follows. As $t^+_{x,j} \geq 0$ (by choice of $w_{x,j}$) we infer from the first conclusion of the claim that $t^+_{x,j} > 0$. 
\par 
We now show that $t^-_{x,j} < 0$. We infer from \eqref{eq.202} that
\begin{equation}
\label{eq.24}
\| \gamma(s_x + h^+_{x,j}) - \gamma(s_x + h^-_{x,j}) - (t^+_{x,j} - t^-_{x,j}) w_{x,j} \| \leq 2 \bveps(r_j) \rho_{x,j} \,.
\end{equation}
It follows from \ref{keylemma}(F) that
\begin{equation}
\label{eq.25}
\| \gamma(s_x+h^+_{x,j}) - \gamma(s_x+h^-_{x,j}) \| \geq 2 \rho_{x,j} (1 - \xi(r_j)) \,.
\end{equation}
Thus,
\begin{equation}
\label{eq.26}
| t^+_{x,j} - t^-_{x,j} | \geq 2 \rho_{x,j} (1 - 3 \bveps(r_j)) \,,
\end{equation}
according to \eqref{eq.24} and \eqref{eq.25}.
If $t^+_{x,j}$ and $t^-_{x,j}$ had the same sign it would follow from the first conclusion of this claim that
\begin{equation*}
| t^+_{x,j} - t^-_{x,j} | \leq 2 \bveps(r_j) \rho_{x,j} \,.
\end{equation*}
Plugging this into \eqref{eq.26} would yield
\begin{equation*}
2(1-3\bveps(r_j)) \leq 2 \bveps(r_j) \,,
\end{equation*}
in contradiction with hypothesis (E).\cqfd
\par 
We now introduce a notation for the difference quotients of $\gamma$. Let $s \in [a,b]$ and $h \in \R \setminus \{0\}$ such that $s+h \in [a,b]$. We define
\begin{equation*}
\triangle_\gamma(s,h) = \frac{\gamma(s+h)-\gamma(s)}{h} \,.
\end{equation*}
We will prove that $\triangle_\gamma(s_x,h)$ and $\triangle_\gamma(s_x,h^+_{x,j})$ are close, for $h \in [h^-_{x,j},h^-_{x,j+1}] \cup [h^+_{x,j+1},h^+_{x,j}]$. To this end we will observe these two vectors are close to positive multiples of $w_{x,j}$, and they both have length nearly equal to $1$.
\par 
{\em{\sc Claim \#4.} For every $x \in C \cap B(x_0,\boldeta r)$, every $j \geq j_0$, and every $h \in [h^-_{x,j},h^-_{x,j+1}] \cup [h^+_{x,j+1},h^+_{x,j}]$ one has
\begin{equation*}
1 - C_0 \xi(r_j) \leq \| \triangle_\gamma(s_x , h ) \| \leq 1 \,,
\end{equation*}
for some universal constant $C_0 > 0$. 
}
\par 
The second inequality simply follows from the fact that $\rmLip \gamma \leq 1$. In order to establish the first inequality we abbreviate $x_j^- = \gamma(s_x+h^-_{x,j})$, $y=\gamma(s_x+h)$, and $x_j^+=\gamma(s_x+h^+_{x,j})$. We will also denote by $\Gamma_{p,q}$ the portion of the curve $\Gamma_{x,j}$ with endpoints $p,q \in \Gamma_{x,j}$. We first show that there exists a universal $\eta_0 > 0$ such that
\begin{equation}
\label{eq.204}
\eta_0 |h| \leq \|\gamma(s_x+h) - \gamma(s_x) \| = \|y-x\| 
\end{equation}
for every $h\in [h^-_{x,j},h^-_{x,j+1}] \cup [h^+_{x,j+1},h^+_{x,j}]$. We note that for such $h$, 
\begin{equation*}
\calH^1(\Gamma_{x,y}) = |h| \geq \rho_{x,j+1} \geq \frac{\rho_{x,j}}{1728} \,,
\end{equation*}
where we used the fact that $\gamma$ is parametrized by arclength, inequalities \eqref{eq.200} (applied to $j+1$) and \eqref{eq.21}. Now assume if possible that $\|x-y\| < \eta_0 |h|$ for some small $\eta_0$ to be determined momentarily. We would then infer that
\begin{equation*}
\calH^1(\Gamma_{y,x^+_j}) \geq \|y-x^+_j\| \geq \rho_{x,j} - \|x-y\| \geq (1-2\eta_0) \rho_{x,j} \,,
\end{equation*}
according to \ref{Ahlfors} and \eqref{eq.200}. In turn,
\begin{equation*}
\calH^1(\Gamma_{x,x^+_j}) = \calH^1(\Gamma_{x,y}) + \calH^1(\Gamma_{y,x^+_j}) \geq \rho_{x,j} \left( 1 + \frac{1}{1728} - 2 \eta_0 \right) \geq \rho_{x,j} \left( 1 + \frac{1}{1729} \right)
\end{equation*}
if $\eta_0$ is small enough. Of course the same estimate holds with $x^-_j$ in place of $x^+_j$. Therefore,
\begin{equation}
\label{eq.208}
\begin{split}
2 \rho_{x,j} \left(1 + \frac{1}{1729} \right) & \leq \calH^1(\Gamma_{x^-_j,x}) + \calH^1(\Gamma_{x,x^+_j}) \\
& = \calH^1(C \cap B(x,\rho_{x,j})) \\
& \leq \left( \frac{(1+\boldeta)^2}{1-\boldeta} \right) 2 \rho_{x,j} \,,
\end{split}
\end{equation}
according to Remark \ref{55}(B) applied with $\tau = \boldeta$ and $r=\rho_{x,j}$. One can choose $\boldeta$ small enough to yield a contradiction, thereby proving the validity of \eqref{eq.204}.
\par 
We next improve on \eqref{eq.204} by showing that there exists a universal $C_0 > 0$ such that
\begin{equation}
\label{eq.205}
\frac{|h|}{1 + C_0 \xi(r_j)} \leq \| \gamma(s_x+h) - \gamma(s_x) \| = \|y-x\| \,.
\end{equation}
We define $0 \leq \eta(h)$ by the the following first equation
\begin{equation*}
\| y-x \| = \frac{|h|}{1+\eta(h)} = \frac{\calH^1(\Gamma_{x,y})}{1+\eta(h)} \,
\end{equation*}
(the second equation ensues from the fact that $\gamma$ is an arclength parametrization) and we seek an upper bound for $\eta(h)$. We define set $S' \subset B(x,\rho_{x,j})$ by
\begin{equation*}
S' = \Gamma_{x^-_j,x} \cup \blseg x , y \brseg \cup \Gamma_{y,x^+_j} \,.
\end{equation*}
We notice that $S'$ is the image of a (possibly non injective) Lipschitz map $[-1,1] \to B(x,\rho_{x,j})$ that sends $-1$ to $x^-_j$ and $+1$ to $x^+_j$. It is now easy to check that 
\begin{equation*}
C' = (C \setminus B(x,\rho_{x,j})) \cup S'
\end{equation*}
is a competitor for $C$ in $B(x,\rho_{x,j})$. Therefore
\begin{equation}
\label{eq.206}
\calH^1(C \cap B(x,\rho_{x,j})) \leq (1 + \xi(r_j)) \calH^1(C' \cap B(x,\rho_{x,j})) \,.
\end{equation}
Furthermore,
\begin{equation}
\label{eq.207}
\calH^1(C' \cap B(x,\rho_{x,j})) \leq \calH^1(\Gamma_{x^-_j,x}) + \|x-y\| + \calH^1(\Gamma_{x,x^+_j}) \,.
\end{equation}
Since also
\begin{equation*}
\calH^1(C \cap B(x,\rho_{x,j})) = \calH^1(\Gamma_{x,j}) = \calH^1(\Gamma_{x^-_j,x}) + \calH^1(\Gamma_{x,y}) + \calH^1(\Gamma_{x,x^+_j}) \,,
\end{equation*}
it follows from the definition of $\eta(h)$ that
\begin{multline*}
\calH^1(C \cap B(x,\rho_{x,j})) - \calH^1(C' \cap B(x,\rho_{x,j})) = \calH^1(\Gamma_{x,y}) - \|x-y\| \\\geq \eta(h) \|y-x\|  \geq \eta(h)\eta_0 |h| \geq  \eta(h)\eta_0 |h^\pm_{x,j+1}| \geq \eta(h)\eta_0 \rho_{x,j+1}
\end{multline*}
according to \eqref{eq.204} and \eqref{eq.200} (applied to $j+1$). Plugging this into \eqref{eq.206} we obtain
\begin{equation*}
\eta(h) \eta_0 \rho_{x,j+1} \leq \xi(r_j) \calH^1(C' \cap B(x,\rho_{x,j}) \,.
\end{equation*}
Finally, we notice that \eqref{eq.207} and \eqref{eq.208} imply that $\calH^1(C' \cap B(x,\rho_{x,j})) \leq \calH^1(C \cap B(x,\rho_{x,j})) \leq 3 \rho_{x,j}$ (if $\boldeta$ is small enough). Thus,
\begin{equation*}
\eta(h) \leq 3 \frac{\rho_{x,j}}{\rho_{x,j+1}} \frac{1}{\eta_0} \xi(r_j) \,,
\end{equation*}
which proves \eqref{eq.205} in view of \eqref{eq.21}.\cqfd
\par
{\em{\sc Claim \#5.} Let $x \in C \cap B(x_0,\boldeta r)$ and $j \geq j_0$. If $h^-_{x,j} \leq h \leq h^-_{x,j+1}$ then $t_{x,h,j} < 0$ and if $h^+_{x,j+1} \leq h \leq h^+_{x,j}$ then $t_{x,h,j} > 0$.
}
\par 
We prove it in case $h^+_{x,j+1} \leq h \leq h^+_{x,j}$, the other case is analogous. We infer from \eqref{eq.202} that
\begin{equation*}
\| \gamma(s_x+h^+_{x,j}) - \gamma(s_x+h) - (t^+_{x,j}-t_{x,h,j})w_{x,j}\| \leq 2 \bveps(r_j) \rho_{x,j}\,,
\end{equation*}
and in turn from \eqref{eq.200} (applied to both $j$ and $j+1$) that
\begin{equation*}
\begin{split}
| t^+_{x,j}-t_{x,h,j}| & \leq \| \gamma(s_x+h^+_{x,j}) - \gamma(s_x+h) \| + 2 \bveps(r_j) \rho_{x,j} \\
& \leq h^+_{x,j} - h + 2 \bveps(r_j) \rho_{x,j} \\
& \leq h^+_{x,j} - h^+_{x,j+1} + 2 \bveps(r_j) \rho_{x,j} \\
& \leq (1+2\bveps(r_j))\rho_{x,j} - \rho_{x,j+1} + 2 \bveps(r_j) \rho_{x,j} \,.
\end{split}
\end{equation*}
Assuming if possible that $t_{x,h} \leq 0$ we would infer from \eqref{eq.203} that
\begin{equation*}
\rho_{x,j}(1-\bveps(r_j)) + |t_{x,h,j}| \leq t^+_{x,j} + |t_{x,h,j}| = |t^+_{x,j}-t_{x,h,j}| \,.
\end{equation*}
Thus,
\begin{equation*}
|t_{x,h,j}| \leq 5 \bveps(r_j) \rho_{x,j} - \rho_{x,j+1} \leq (5.1728\, \bveps(r_j) -1) \rho_{x,j+1} \leq (5.16.1728 \,\boldeta -1) \rho_{x,j+1}  \,.
\end{equation*}
Since the right member of the inequality is negative if $\boldeta$ is chosen small enough, we obtain the sought for contradiction.\cqfd
\par 
We are now ready to finish off the proof of the theorem. We start by fixing $x \in B(x_0,\boldeta r_0)$ and we will show that $\gamma$ is differentiable at $s_x$. To this end we fix first $j \geq j_0$ and $h \in [h^-_{x,j},h^-_{x,j+1}] \cup [h^+_{x,j+1},h^+_{x,j}]$. Dividing \eqref{eq.202} by $|h|$, and referring to \eqref{eq.200} and \eqref{eq.21}, we obtain
\begin{multline}
\label{eq.210}
\left| \| \triangle_\gamma(s_x,h) \| - \left| \frac{t_{x,h,j}}{h} \right| \right| \\
\leq \left\| \triangle_\gamma(s_x,h) - \frac{t_{x,h,j}}{h} w_{x,j} \right\| \leq \bveps(r_j) \frac{\rho_{x,j}}{|h|} \leq \bveps(r_j) \frac{\rho_{x,j}}{|h^\pm_{x,j+1}|} \leq \bveps(r_j) \frac{\rho_{x,j}}{\rho_{x,j+1}} \\ \leq 1728\, \bveps(r_j) \,.  
\end{multline}
We notice that $t_{x,h,j}/h$ and $t^+_{x,j}/h^+_{x,j}$ are both positive according to {\sc Claims \#3} and \#5. It therefore follows from {\sc Claim \#4} that
\begin{equation*}
\begin{split}
\left| \frac{t_{x,h,j}}{h} - \frac{t^+_{x,j}}{h^+_{x,j}} \right| & \leq \left| \left| \frac{t_{x,h,j}}{h} \right| - \| \triangle_\gamma(s_x,h) \| \right| + \left| \| \triangle_\gamma(s_x,h) \| - \| \triangle_\gamma(s_x,h^+_{x,j}) \| \right| \\ & \qquad+ \left| \| \triangle_\gamma(s_x,h^+_{x,j}) \| - \left| \frac{t^+_{x,j}}{h^+_{x,j}} \right| \right| \\
& \leq 2.1728\, \bveps(r_j) + C_0 \xi(r_j) \\
& = C_0' \bveps(r_j)
\end{split}
\end{equation*}
for some universal constant $C_0' > 0$. In view of \eqref{eq.210} we thus obtain
\begin{equation}
\label{eq.211}
\begin{split}
\| \triangle_\gamma(s_x,h^+_{x,j}) - \triangle_\gamma(s_x,h) \| & \leq \left\| \triangle_\gamma(s_x,h^+_{x,j}) - \frac{t^+_{x,j}}{h^+_{x,j}} w_{x,j} \right\| \\ & \qquad+ \left\| \left( \frac{t^+_{x,j}}{h^+_{x,j}} - \frac{t_{x,h,j}}{h} \right) w_{x,j} \right\| + \left\| \frac{t_{x,h,j}}{h} w_{x,j} - \triangle_\gamma(s_x,h) \right\| \\
& \leq 2.1728\, \bveps(r_j) + C_0' \bveps(r_j) \\
& = C_0'' \bveps(r_j) \,.
\end{split}
\end{equation}
Applying temporarily this inequality to $h=h^+_{x,j+1}$ we see that
\begin{equation*}
\| \triangle_\gamma(s_x,h^+_{x,j+1}) - \triangle_\gamma(s_x,h^+_{x,j}) \| \leq C_0'' \bveps(r_j) \,.
\end{equation*}
Thus for any $k \geq j_0$ and any $l \geq 1$ we have
\begin{multline}
\label{eq.212}
\| \triangle_\gamma(s_x,h^+_{x,k+l}) - \triangle_\gamma(s_x,h^+_{x,k}) \| \leq \sum_{j=k}^{k+l-1} \| \triangle_\gamma(s_x,h^+_{x,j+1}) - \triangle_\gamma(s_x,x^+_{x,j}) \| \\ \leq (16.72/71)C_0'' \omega(r_{k-1}) =: C_0''' \omega(r_{k-1}).
\end{multline}
according to \ref{gauges}. Therefore $\{ \triangle_\gamma(s_x,h^+_{x,j}) \}_j$ is a Cauchy sequence, whence also convergent, and we denote its limit by $\gamma'(s_x)$. In order to verify that $\gamma'(s_x)$ is the derivative of $\gamma$ at $s_x$ we combine \eqref{eq.211} with \eqref{eq.212} in which we let $l \to \infty$: If $h \in [h^-_{x,j},h^-_{x,j+1}] \cup [h^+_{x,j+1},h^+_{x,j}]$ then
\begin{equation}
\label{eq.213}
\begin{split}
\|\gamma'(s_x) - \triangle_\gamma(s_x,h) \| & \leq \| \gamma'(s_x) - \triangle_\gamma(s_x,h^+_{x,j}) \| + \| \triangle_\gamma(s_x,h^+_{x,j}) - \triangle_\gamma(s_x,h) \| \\
& \leq C_0'' \bveps(r_j) + C_0''' \omega(r_{j-1}) \\
& \leq 2C_0''' \omega(r_{j-1}) \\ 
& = C^{iv}_0 \omega(r_{j-1}).
\end{split}
\end{equation}
We notice that \eqref{eq.220}, \eqref{eq.21} and \eqref{eq.200} imply that $r_{j-1} \leq 144.1728\,|h|$. Thus \eqref{eq.213} finally yields
\begin{equation*}
\|\gamma'(s_x) - \triangle_\gamma(s_x,h) \| \leq C^v_0 \omega(C^v_0 |h|)\,,
\end{equation*}
thereby establishing the differentiability of $\gamma$ at $h$.
\par 
Finally, if $x_1,x_2 \in C\cap B(x_0,\boldeta r_0)$ we let $s_1 = s_{x_1}$ and $s_2 = s_{x_2}$ and $h = \calH^1(\Gamma_{x_1,x_2}) = |s_1 - s_2|$ (since $\gamma$ is parametrized by arclength). Upon noticing that $\triangle_\gamma(s_1,h) = \triangle_\gamma(s_2,-h)$ we infer from the above inequality that
\begin{multline*}
\| \gamma'(s_1)-\gamma'(s_2)\| \leq \| \gamma'(s_1) - \triangle_\gamma(s_1,h)\| + \| \triangle_\gamma(s_2,-h) - \gamma'(s_2)\| \\ \leq 2 C^v_0 \omega(C_0^v|h|) \leq \bC \omega(\bC |s_1-s_2|)
\end{multline*}
for some universal constant $\bC > 0$, and the proof of the theorem is complete.
\end{proof}

\section{Application to the quasihyperbolic distance}
\label{sec.6}

\begin{Empty}[Local hypotheses about the ambient Banach space]
In \ref{61} $X$ is an arbitrary Banach space, in \ref{62} $X$ is a reflexive Banach space, and in \ref{63} $X$ is uniformly rotund and $\delta_X^{-1}$ is Dini.
\end{Empty}

\begin{Empty}[The quasihyperbolic distance]
\label{61}
In this section we assume $D \subset X$ is a nonempty open subset with the following property: For any pair $x,y \in D$ there exists a curve $\Gamma \subset D$ with endpoints $x$ and $y$, and $\calH^1(\Gamma) < \infty$. We now proceed to define on $D$ a new metric $d_q$, so-called the {\em quasihyperbolic distance} of $D$. We start by abbreviating
\begin{equation*}
h : D \to \R : x \mapsto \rmdist(x , X \setminus D) \,.
\end{equation*}
Since $D$ is open and bounded we notice that $0 < h(x) < \infty$, $x \in D$. It is also helpful to note, for further purposes, that $1/h$ is locally Lipschitzian. In fact, if $\eta > 0$ and $D_\eta := D \cap \{ x : h(x) > \eta \}$ then $\rmLip (1/h) \restriction_{D_\eta} \leq \eta^{-2}$, as follows from
\begin{equation*}
\left| \frac{1}{h(x)} - \frac{1}{h(y)} \right| = \left| \frac{ h(y)-h(x)}{h(y)h(x)} \right| \leq \frac{| h(y) - h(x) |}{\eta^2} \leq \frac{\|y-x\|}{\eta^2} \,,
\end{equation*}
$x,y \in D_\eta$. For $x_0,y_0 \in D$ we now define
\begin{multline*}
d_q(x_0,y_0) = \inf \bigg\{ \int_\Gamma \frac{1}{h} d\calH^1 : \Gamma \subset D \text{ is a curve of finite length} \\\text{ with endpoints $x_0$ and $y_0$} \bigg\} \,.
\end{multline*}
It is obvious that for each given competitor $\Gamma$ one has $1/h \in L_1(X,\calH^1 \hel \Gamma)$; in particular $d_q(x_0,y_0) < \infty$. There are two points to this section:
\begin{enumerate}
\item[(A)] The infimum in the definition of $d_q(x_0,y_0)$ is achieved by some curve $\Gamma$ provided $X$ is a separable reflexive Banach space and $D$ is convex;
\item[(B)] If $\Gamma$ is a curve that achieves the infimum in the definition of $d_q(x_0,y_0)$ then $\Gamma \setminus \{x_0,y_0\}$ is a $C^1$ open curve provided $X$ is uniformly rotund and $\delta_X^{-1}$ is Dini.
\end{enumerate}
In case $X = \ell_2^n$ is a finite dimensional Euclidean space, both results have been obtained by G. Martin \cite{MAR.85}. J. V\"ais\"al\"a proves the existence part (A), \cite{VAI.05}, and obtains the regularity part in case $X = \ell_2$ is Hilbert, \cite{VAI.07}. It should be noted that our method shows a quasihyperbolic geodesic in $\ell_2$ is $C^{1,1/2}$ in the interior, when in fact the stronger $C^{1,1}$ regularity entails from its quasihyperbolic property, see \cite{VAI.07}.
\end{Empty}
 
\begin{Empty}[Existence of quasihyperbolic geodesics]
\label{62}
We assume $X$ is reflexive.
As mentioned before, the existence part (A) above was proved by J. V\"ais\"al\"a. We now briefly comment on the proof. In order that  \ref{compactness} apply to a minimizing sequence $\{\Gamma_n\}$, we ought to show that 

\begin{itemize}
\item[(1)] the weight $w:X\to (0,+\infty]$ defined by $w(x)=1/h(x)$ if $x\in D$ and  $w(x)=+\infty$ otherwise,  is weakly* lower semicontinuous, i.e. weakly lower semicontinuous since $X$ is reflexive;
\item[(2)] there exists $R>0$ such that $\Gamma_n\subset B(x_0,R)$ for every $n$. 
\end{itemize}

The Hahn-Banach Theorem implies that $\rmclos D = \cap \calE$ where $\calE$ is the collection of translates $z+E_0$ of closed half spaces $E_0 \subset X$ such that $D \subset z + E_0$. Therefore $h(x) = \inf \{ \rmdist(x,E) : E \in \calE \}$, and establishing (1) boils down to showing that each $\psi_E : x \mapsto \rmdist(x,E)$ is weakly upper semicontinuous. In fact, as $H = \rmbdry E$ is a hyperplane, there exists a linear form $x^* : X \to \R$ such that $\rmdist(x,\rmbdry E) = | \la x, x^* \ra |$. Since this function of $x$ is Lipschitzian, one has $x^* \in X^*$, and the weak continuity of $\psi_E$ now readily follows. To establish (2) we assume that 
$M:=\sup_n\int_{\Gamma_n} w\, d\calH^1 < \infty$, and we write $L_n:=\calH^1(\Gamma_n)$. Considering an arclength parametrization $\gamma_n:[0,L_n]\to X$ of $\Gamma_n$ with $\gamma_n(0)=x_0$, we estimate
\begin{multline*}
\int_{\Gamma_n} w\, d\calH^1=\int_{0}^{L_n} \frac{1}{h(\gamma_n(t))}\, dt\geq \int_{0}^{L_n} \frac{1}{h(x_0)+\|\gamma_n(t)-\gamma_n(0)\|}\, dt\\
\geq \int_{0}^{L_n} \frac{1}{h(x_0)+t}\, dt=\log\left(1+\frac{L_n}{h(x_0)}\right)\,.
\end{multline*}
Whence $L_n\leq h(x_0)e^M$. Since $\|x-x_0\|\leq L_n$ for every $x\in \Gamma_n$, item (2) is proved. See \cite{VAI.05} for details.
\par 
We now apply \ref{subsequence} and \ref{compactness} to obtain a connected compact set $C \subset \rmclos D$ and a  (not relabeled)  subsequence
still denoted $\{\Gamma_n\}$, 
such that $\rmdist_\calH^*(\Gamma_n,C) \to 0$ and $\int_C w\, d\calH^1 \leq d_q(x_0,y_0)$. Recall \ref{prelim.curves} that $C$ contains a curve $\Gamma$ with endpoints $x_0$ and $y_0$; thus $\int_\Gamma w\, d\calH^1 \leq d_q(x_0,y_0)$. It remains to establish that $\Gamma \cap \rmbdry D = \emptyset$. For each $0 < 2\eta < \min\{h(x_0),h(y_0)\}$ we note that if $\Gamma \cap (D \setminus D_\eta) \neq \emptyset$ then $\calH^1(\Gamma \cap (D_\eta \setminus D_{2\eta})) \geq \eta$ and therefore $\int_{\Gamma \cap (D_\eta \setminus D_{2\eta})} w\, d\calH^1 \geq \frac{1}{2}$. As $\int_\Gamma w\, d\calH^1 < \infty$, $\Gamma$ can therefore meet at most finitely many annuli $D_{2^{-j}} \setminus D_{2^{-j+1}}$.
\end{Empty}

\begin{Empty}[Regularity of quasihyperbolic geodesics]
\label{63}
We assume $X$ is uniformly rotund and $\delta_X^{-1}$ is Dini.
Letting $\Gamma$ be a quasihyperbolic geodesic with endpoints $x_0$ and $y_0$, we recall from the argument above that $\Gamma \subset U = D_\eta$ for some $\eta > 0$, and also that $\rmLip w \restriction_{D_\eta} \leq \eta^{-2}$. It follows from \ref{lenghtminimiz} that \ref{mainregth} will apply with $\xi(r) \leq Cr$ at points $x \in \Gamma$ such that $\Theta^1(\calH^1 \hel \Gamma , x) = 1$. This is the case of each $x \in \mathring{\Gamma}$, according to \ref{toporeg}. Thus $\Gamma$ is, near $x$, a $C^{1,\omega}$ curve where $\omega$ is the mean slope of $\delta_X^{-1} \circ \xi$. Since $\xi(r) \leq Cr$, $\omega$ is asymptotic to the mean slope of $\delta_X^{-1}$. See also \ref{55}(D).
\end{Empty}

\section{Differentiability of almost minimizing curves in 2 dimensional rotund spaces}
\label{sec.8}

\begin{Empty}[Local hypotheses about the ambient Banach space]
In this section $X$ is 2 dimensional and (uniformly) rotund. We also assume that its norm $x \mapsto \|x\|$ is $C^2$ smooth on $X \setminus \{0\}$. We let $S_X = X \cap \{ v : \|v\|=1 \}$ be the unit circle.
\end{Empty}

\begin{Empty}[Local modulus of rotundity]
Here we localize (in direction) the definition of modulus of uniform rotundity of $X$. Specifically, for $v \in S_X$ and $0 < \veps \leq 1$, we define
\begin{equation*}
\delta_X(v;\veps) = \inf \left\{ 1 - \left \| \frac{v + (v+h)}{2} \right\| : h \in X ,  \|v+h\|\leq 1 \, \text{ and } \|h\|\geq\veps \right\} \,.
\end{equation*}
Our first aim is to show that the rotundity and $C^2$ smoothness of the norm imply $\delta_X(v;\veps)$ has the best possible behavior (i.e. is asymptotic to $\veps^2$) except possibly for a closed nowhere dense set of directions $v$.
\end{Empty}

\begin{Theorem}
\label{83}
There exists $G \subset S_X$ with the following properties.
\begin{enumerate}
\item[(A)] $S_X \setminus G$ is closed and has empty interior in $S_X$;
\item[(B)] For every $v_0 \in G$ there exists $\rho > 0$ and $C > 0$ such that for each $v \in S_X$, if $\|v-v_0\|< \rho$ then $\delta_X^{-1}(v;\veps) \leq C \sqrt{\veps}$ for every $0 < \veps \leq 1$.
\end{enumerate}
\end{Theorem}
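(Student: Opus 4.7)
The plan is to take $G := S_X \cap \{v : \partial^2_{h,h}f_v(0) > 0\}$, as advertised in the discussion of Section~\ref{sec.8}. Writing $Q_v := D^2\|\cdot\|(v)$, the $1$-homogeneity of the norm (Euler's identity applied twice) yields $Q_v(v, \cdot) = 0$; a direct Taylor expansion of $f_v$ on the one-dimensional space $T_v S_X$, combined with $v^*(e_v) = 0$, shows $\partial^2_{h,h}f_v(0) = Q_v(e_v, e_v)$ whenever $e_v$ is a unit vector spanning $T_v S_X$. Since $X$ is two-dimensional, $e_v$ may be chosen continuously in $v \in S_X$ locally; by $C^2$ smoothness of $\|\cdot\|$ on $X \setminus \{0\}$ the map $v \mapsto Q_v(e_v, e_v)$ is then continuous, so $G$ is open.

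To finish (A), I would show $S_X \setminus G$ is nowhere dense. Suppose for contradiction it contains an open arc $A$, and parametrize $A$ by norm-arclength $\gamma : I \to A$, so $\|\gamma'\| \equiv 1$ and $\gamma'(t)$ spans $T_{\gamma(t)}S_X$. Differentiating $\|\gamma(t)\| \equiv 1$ twice produces $v^*_{\gamma(t)}(\gamma''(t)) = -Q_{\gamma(t)}(\gamma'(t), \gamma'(t))$, which vanishes on $A$ by hypothesis. Since $T_{\gamma(t)}S_X = \R\gamma'(t)$, this forces $\gamma''(t) = \lambda(t)\gamma'(t)$. Differentiating $\|\gamma'(t)\| \equiv 1$ gives $v^*_{\gamma'(t)}(\gamma''(t)) = 0$; but $v^*_{\gamma'(t)}(\gamma'(t)) = 1 \neq 0$, hence $\lambda \equiv 0$, $\gamma$ is affine, and $A$ is a line segment in $S_X$, contradicting rotundity.

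For (B), it suffices to prove a uniform quadratic lower bound $\delta_X(v;\eta) \geq c\eta^2$ for all $v$ in some $S_X$-neighborhood of $v_0$ and all $\eta \in (0,1]$, which translates into the desired $C = 1/\sqrt{c}$ estimate by the basic property of $\delta_X^{-1}$ recalled in \ref{UR}. Fix such $v$ and $h \in X$ with $\|v+h\| \leq 1$ and $\|h\| \geq \eta$, and decompose $h = \alpha e_v + \beta v$ in the basis $\{e_v, v\}$. The estimate $|v^*(v+h)| = |1+\beta| \leq \|v+h\| \leq 1$ forces $\beta \in [-2, 0]$. I would then split the analysis according to whether the tangential component $\alpha$ dominates the radial component $\beta$.

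If $|\alpha| < |\beta|/2$, then $|\beta| > \tfrac{2}{3}\|h\| \geq \tfrac{2}{3}\eta$, and the triangle inequality applied to $v+h/2 = (1+\beta/2)v + (\alpha/2)e_v$ (using $1+\beta/2 \geq 0$) gives $\|v+h/2\| \leq 1 - |\beta|/2 + |\alpha|/2 \leq 1 - |\beta|/4$, whence $1 - \|v+h/2\| \geq \eta/6 \geq \eta^2/6$ since $\eta \leq 1$. If instead $|\alpha| \geq |\beta|/2$, then $\|h\|^2 \leq 9\alpha^2$. For $\|h\| \geq \rho$ with $\rho > 0$ fixed, the continuous function $(v,w) \mapsto 1 - \|(v+w)/2\|$ is strictly positive on the compact set $\{(v,w) \in S_X \times B_X : \|v-v_0\| \leq \rho_0, \|v-w\| \geq \rho\}$ by rotundity (Remark~\ref{stupidremark} together with the equality case of the triangle inequality in a rotund space), hence bounded below by some $\delta_0 > 0$, giving $1 - \|v+h/2\| \geq \delta_0 \geq \delta_0 \eta^2$ since $\eta \leq 1$. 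For $\|h\| < \rho$, I would invoke the second-order Taylor expansion $\|v+h\| = 1 + v^*(h) + \tfrac{1}{2}Q_v(h) + R(v,h)$ with uniform remainder $|R(v,h)| \leq \omega(\|h\|)\|h\|^2$ (obtained from $C^2$ smoothness on a compact neighborhood of $v_0$, with $\omega(r) \to 0$). The constraint $\|v+h\| \leq 1$ yields $v^*(h) \leq -\tfrac{1}{2}Q_v(h) + \omega(\|h\|)\|h\|^2$; substituting into the analogous expansion for $\|v+h/2\|$ and exploiting $Q_v(h) = \alpha^2 Q_v(e_v, e_v) \geq c_0 \alpha^2$ (uniformly in $v$ near $v_0$) delivers
\begin{equation*}
1 - \|v+h/2\| \geq \tfrac{1}{8}Q_v(h) - \tfrac{3}{2}\omega(\|h\|)\|h\|^2 \geq \tfrac{c_0}{8}\alpha^2 - \tfrac{27}{2}\omega(\rho)\alpha^2,
\end{equation*}
and choosing $\rho$ so small that $\omega(\rho) \leq c_0/216$ gives $1 - \|v+h/2\| \geq (c_0/16)\alpha^2 \geq (c_0/144)\eta^2$. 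The main subtlety is that $Q_v$ is degenerate along the radial direction $v$, so the Taylor control captures only the tangential part $\alpha$; the radial regime $|\alpha| < |\beta|/2$ has to be handled by the direct triangle inequality argument above.
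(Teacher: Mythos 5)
Your proof is correct and follows the same overall structure as the paper's: same definition of $G$, same use of rotundity to rule out an open arc of "flat" directions, and same second-order Taylor expansion of the norm to obtain the quadratic lower bound $\delta_X(v;\eta) \geq c\eta^2$ uniformly near $v_0 \in G$. Two of your steps are executed differently, though, and the differences are worth noting. For (A), the paper observes that on an open arc in $S_X \setminus G$ the full Hessian $D^2f(v)$ vanishes (using $Q_v(v,\cdot)=0$, symmetry, and $Q_v(e_v,e_v)=0$), then propagates this vanishing to a cone by $(-1)$-homogeneity of $D^2f$, concluding that $f$ is linear there; you instead run a curvature ODE along an arclength parametrization $\gamma$ of the arc, deducing $\gamma'' \equiv 0$. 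Both are correct; the ODE route needs the implicit fact (which you leave unstated but which does hold) that the norm-arclength parametrization inherits $C^2$ regularity from the $C^2$ norm, while the paper's cone argument sidesteps any differentiation along the sphere. For (B), the paper invokes the analogue of Remark~\ref{stupidremark} to restrict to perturbations $h = h_t$ with $v + h_t \in S_X$, yielding an essentially one-dimensional Taylor expansion in $t$; you instead keep the full constraint $\|v+h\| \leq 1$, decompose $h = \alpha e_v + \beta v$, and handle the "radially dominated" regime $|\alpha| < |\beta|/2$ separately by an elementary triangle-inequality estimate. Your version avoids having to justify the local analogue of Remark~\ref{stupidremark} at the cost of the extra case; otherwise the Taylor computation, the use of $Q_v(h) = \alpha^2 Q_v(e_v,e_v)$, and the compactness argument for $\|h\| \geq \rho$ mirror the paper's.
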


\begin{proof}
We denote the norm of $X$ as $f : X \to \R : v \mapsto \|v\|$. Given $v \in S_X$ we choose a unit vector $e_v \in X$ that generates the tangent line $T_vS_X$. Thus $v,e_v$ is a basis of $X$ and we denote as $v^*,e_v^*$ the corresponding dual basis (i.e. $x = \la x , v^* \ra v + \la x , e_v^* \ra e_v$ for each $x \in X$). We observe that $Df(v) = v^*$ and that $D^2f(v)$ is a positive semidefinite bilinear form on $X$ (owing to the convexity of $f$),
\begin{equation}
\label{eq.400}
\|v+h\| = 1 + \la h , v^* \ra + \frac{1}{2}D^2f(v)(h,h) + o(\|h\|^2)
\end{equation}
whenever $h \in X$, where $o(\|h\|^2)$ is {\em little o} of $\|h\|^2$ uniformly in $v$. 
\par 
Upon noticing that the function 
\begin{equation*}
S_X \to \R : v \mapsto D^2f(v)(e_v,e_v)
\end{equation*}
is continuous, we immediately infer that
\begin{equation*}
G = S_X \cap \{ v : D^2f(v)(e_v,e_v) > 0 \}
\end{equation*}
is an open subset of $S_X$ and that
\begin{equation*}
B := S_X \setminus G = S_X \cap \{ v : D^2f(v)(e_v,e_v) = 0 \} \,.
\end{equation*}
Assume if possible that $B$ contains a nonempty open interval $V \subset S_X$. For each $v \in V$ one has $D^2f(v)(e_v,e_v)=0$ and $D^2f(v)(v,v)=0$ (because the norm $f$ is homogeneous of degree 1). Since $D^2f(v)$ is also symmetric, this clearly implies that $D^2f(v)=0$. The same argument applied to points of $r.V$, $r > 0$, shows that $D^2f$ vanishes identically in the open connected cone $C = X \cap \{ rv : r > 0 \text{ and } v \in V \}$. Therefore $f \restriction_C$ is the restriction to $C$ of a linear function $l : X \to \R$, which in turn shows that $V=C \cap \{l=1\}$ is a line segment, in contradiction with the rotundity of $f$. This proves (A)
\par 
We now turn to proving (B). Let $V \subset G$ be an open interval containing $v_0$. With $v \in V$ we associate $a(v) = \frac{1}{2} D^2f(v)(e_v,e_v)$, so that $a(v) > 0$. Since $v \mapsto a(v)$ is continuous, there is no restriction to assume that $V$ is small enough for $a(v) \geq a > 0$ uniformly in $v\in V$. If $t \in \R$ then \eqref{eq.400} implies
\begin{equation*}
\|v+te_v\| = 1 + \la te_v,v^* \ra + \frac{1}{2} D^2f(v)(te_v,te_v) + o(t^2) = 1 + a(v)t^2 + o(t^2) \,.
\end{equation*}
We let $h_t \in X$ be so that $v+h_t \in S_X$,
\begin{equation*}
v+h_t = \frac{v+te_v}{\|v+te_v\|} \,,
\end{equation*}
and we define $\veps_t = \|h_t\|$. Since
\begin{equation}
\label{eq.401}
h_t = \left( \frac{t}{1+a(v)t^2+o(t^2)} \right) e_v - \left( \frac{a(v)t^2+o(t^2)}{1+a(v)t^2+o(t^2)} \right) v
\end{equation}
we readily verify that
\begin{equation}
\label{eq.402}
\lim_{t \to 0} \frac{\veps_t}{t} = 1 \text{ uniformly in } v\in V \,.
\end{equation}
Furthermore, \eqref{eq.400} shows that
\begin{equation}
\label{eq.403}
\left\| v + \frac{h_t}{2} \right\| = 1 + \frac{1}{2} \la h_t , v^* \ra + \frac{1}{4} \left( \frac{1}{2} D^2f(v)(h_t,h_t) \right) + o(t^2) \,,
\end{equation}
and it follows from \eqref{eq.401} that
\begin{equation*}
\begin{split}
\la h_t , v^* \ra & = \frac{t}{1+a(v)t^2+o(t^2)} \la e_v,v^* \ra - \frac{a(v)t^2+o(t^2)}{1+a(v)t^2+o(t^2)} \la v,v^* \ra \\
& = - \frac{a(v)t^2+o(t^2)}{1+a(v)t^2+o(t^2)}\\
& = - a(v)t^2 + o(t^2) \,,
\end{split}
\end{equation*}
as well as
\begin{equation*}
\begin{split}
\frac{1}{2} D^2f(v)(h_t,h_t) & = \frac{t^2}{(1+a(v)t^2+o(t^2))^2} \left( \frac{1}{2} D^2f(v)(e_v,e_v) \right) \\
& \qquad + \left( \frac{a(v)t^2 + o(t^2)}{1+a(v)t^2+o(t^2)} \right)^2 \left( \frac{1}{2} D^2f(v)(v,v) \right) \\
& \qquad - 2 \left( \frac{t(a(v)t^2+o(t^2))}{(1+a(v)t^2+o(t^2))^2} \right) \left( \frac{1}{2} D^2f(v)(v,e_v) \right)\\
& = \frac{a(v)t^2}{(1+a(v)t^2+o(t^2))^2} + O(t^3) \\
& = a(v)t^2 + o(t^2) \,.
\end{split}
\end{equation*}
Plugging these into \eqref{eq.403} yields
\begin{equation*}
1 - \left\| v + \frac{h_t}{2} \right\| = \frac{a(v)}{4}t^2 + o(t^2) \,.
\end{equation*}
In view of \eqref{eq.402} this means that one can find $\varepsilon_0>0$ such that 
\begin{equation}\label{bientop}
1 - \left\| v + \frac{h_t}{2} \right\| \geq \frac{a(v)}{5}\|h_t\|^2 \quad \forall \|h_t\|\leq \varepsilon_0\,.
\end{equation}
Taking if necessary a smaller $\varepsilon_0$, we can also assume that any $h\in X$ satisfying both $v+h \in S_X$ and $\|h\|\leq \varepsilon_0$ is of the form $h_t$ for some $t$.

We let now $ \varepsilon >0$ be given and consider an arbitrary $v+h\in S_X$ with $\|h\|\geq \varepsilon$. We distinguish two cases. Firstly  if $\|h\|\leq \varepsilon_0$ then \eqref{bientop} says 
$$1 - \left\| v + \frac{h}{2} \right\| \geq \frac{a(v)}{5}\varepsilon^2\,.$$
Secondly if $\|h\|\geq \varepsilon_0$ then 
$$1 - \left\| v + \frac{h}{2} \right\| \geq M\varepsilon^2, $$
where $M$ is defined by 
$$M:=\min \left\{ \frac{1 - \left\| v + \frac{h}{2} \right\|}{\|h\|^2} \; : \; \{v,v+h\} \in S_X  \, \text{ and } \|h\|\geq\varepsilon_0 \right\}, $$
which is positive due to the uniform rotundity of the norm. We just have proved, owing also to Remark \ref{stupidremark}, that
\begin{equation*}
\delta_X(v;\veps) \geq C_0 \veps^2 \quad \forall v\in V,
\end{equation*}
with $C_0=\min(M,\frac{a}{5})$ and Conclusion (B) now easily follows.
\end{proof}

\begin{Empty}[Excess of length of a nonstraight path]
\label{84}
Here we notice that \ref{excess.length} can be improved in the following way. In case $x_0,x_1$ and $z$ are the vertices of a nondegenerate triangle in $X$, one has
\begin{equation*}
\|x_0-z\| + \|z-x_1\| \geq \|x_0-x_1\| \left( 1 + \delta_X \left( v; \frac{ \rmdist(z, x_0 + \rmspan \{x_1-x_0\}) } { 2 (\|x_0-z\| + \|z-x_1\| ) } \right) \right)\,.
\end{equation*}
where
\begin{equation*}
v = \frac{x_1-x_0}{\|x_1-x_0\|} \,.
\end{equation*}
The proof is {\em exactly} the same as that of \ref{excess.length} once one recognizes that $\delta_X(\veps)$ can be replaced by $\delta_X(v;\veps)$ in \eqref{eq.5.bis}.
\end{Empty}

\begin{Empty}[Height bound]
\label{85}
Under our assumptions on $X$, the height bound \ref{keylemma}(G) can be improved to
\begin{equation*}
\rmdist(z,x+L) \leq 16 . \delta_X^{-1}(v;\xi(r))
\end{equation*}
where $v=(x_1-x_0)\|x_1-x_0\|^{-1}$. Here $\delta^{-1}_X(v;\xi(r))$ denotes the value at $\xi(r)$ of the reciprocal of the function $\delta_X(v;\cdot)$. The proof, based on \ref{84}, is identical.
\end{Empty}

\begin{Empty}[Tangent lines]
\label{86}
We now indicate how to apply the previous ``localized'' observations to the study of regularity of almost minimizing curves, using the notion of {\em tangent measure}. We point out that everything we do in this number makes sense in a finite dimensional rotund Banach space. We will often refer to \cite{DEP.01} and \cite{PRE.87}, but only to {\em elementary} results in these papers, in particular those that {\em do not} depend on the Euclidean structure of the ambient space $\R^n$ considered there. 
\par 
Given $C \subset X$ a set which is $(\xi,r_0)$ almost minimizing in $U \subset X$ with respect to a Dini gauge $\xi$, we consider the finite Borel measure $\mu = \calH^1 \hel C$ in $U$. Given $x \in U$ and $r > 0$ we next consider the finite Borel measure $r^{-1} (T_{x,r})_*\mu$ on $(U-x)/r$, where $T_{x,r}(y)=(y-x)/r$. A 1 dimensional tangent measure of $\mu$ at $x$ is, by definition, a weak* limit of some sequence $\{r_j^{-1}(T_{x,r_j})_*\mu\}$, where $r_j \downarrow 0$. The collection of those is denoted $\rmTan^{(1)}(\mu,x)$. We gather useful information about tangent measures.
\begin{enumerate}
\item[(A)] If $x \in C \cap U$ then $\rmTan^{(1)}(\mu,x) \neq \emptyset$;
\item[(B)] If $x \in C \cap U$, $\Theta^1(\calH^1 \hel C , x)=1$, and $\nu \in \rmTan^{(1)}(\mu,x)$, then $\nu = \calH^1 \hel W$ for some line $W\in \bG(X,1)$;
\item[(C)] If $x \in C \cap U$ and $\Theta^1(\calH^1 \hel C,x) =1$, recall \ref{lip.reg.bis} that $x$ is a regular point of $C$: If $\gamma$ is an arclength parametrization of some $C \cap B(x,r)$ with $\gamma(0)=x$, then $\gamma$ is differentiable at 0 if and only if $\rmTan^{(1)}(\mu,x)$ is singletonic;
\item[(D)] If $x \in C \cap U$, $\Theta^1(\calH^1 \hel C , x)=1$, then the collection of tangent lines $\bG(X,1) \cap \{ W: \calH^1 \hel W \in \rmTan^{(1)}(\mu,x) \}$ is connected.
\end{enumerate}
\par

\begin{proof}[Proof of {\rm (A)}] The proof of (A) depends on our assumption $\rmdim X < \infty$ through the application of a compactness Theorem for Radon measures, see e.g. \cite[Proposition~5.4]{DEP.01}. The relevant almost monotonicity property of $\mu$ is in \ref{monotonicity}.
\end{proof}
\par 

\begin{proof}[Proof of {\rm (B)}]
Let $r_j \downarrow 0$ be such that $\nu$ is the weak* limit of the sequence $\{\mu_j  \}$ where $\mu_j = r_j^{-1} (T_{x,r_j})_*\mu$. Abbreviate $C_j = (C-x)/r_j$. Using the translation invariance, and behavior under homothethy, of the Hausdorff measure, one immediately checks that $C_j$ is $(\xi_j,r_0/r_j)$ almost minimizing in $U_{x,r_j}=(U-x)/r_j$, where $\xi_j(t)=\xi(tr_j)$, and that $\mu_j = \calH^1 \hel C_j$. In the vocabulary of \cite{DEP.01}, each $\mu_j$ is 1 concentrated, according to \ref{density.first}(C). Notice that \ref{monotonicity} does {\em not} show $\mu_j$ is $(\xi_j,1)$ almost monotone in the sense of \cite{DEP.01} (because the condition is verified only at those points of the support of $\mu_j$, not all points of $U_{x,r_j}$): We will emphasize this by saying that $\mu_j$ is $(\xi_j,1)$ almost monotone {\em on its support}. Carefully reading the proofs of \cite[4.2 and 4.1]{DEP.01} reveals that 
\begin{enumerate}
\item[(1)] For every $\lambda > 0$ and every $\delta > 0$ there exists $j_0$ such that $C_j \cap B(0,\lambda) \subset B(\rmsupp \nu,\delta)$ for every $j\geq j_0$;
\item[(2)] $\Theta^1(\nu,z) \geq 1$ for every $z \in \rmsupp \nu$.
\end{enumerate}
We put $Z = \rmsupp \nu$, $Z_\lambda = Z \cap B(0,\lambda)$ and $F_\lambda = Z_\lambda \cap \rmbdry B(0,\lambda)$ for each $\lambda > 0$. 
Select a subsequence $\{j_k\}$ satisfying $\sum_k2^k\xi(2^k r_{j_k}) <1/6$. By the assumption on the density and \ref{local.topology} (D), we may assume that 
$$\frac{\calH^1(C_{j_k} \cap B(0,2^k))}{2^{k+1}}-1<\frac{1}{6}\,,$$
and that $\rmcard C_{j_k}\cap  \rmbdry B(0,2^k)\geq 2$ for every $k\geq 1$. Setting 
$$G_k:=\{\lambda\in[0,2^k] :\rmcard C_{j_k}\cap  \rmbdry B(0,\lambda)\geq 3 \} \,,$$
we infer from \ref{eilenberg} that $\calL^1(G_k)<2^{k}/3$ . 
Then we can find for each integer $k\geq 1$ a radius $\lambda_k\in(2^{k+1}/3,2^k)$ such that $\rmcard C_{j_k}\cap  \rmbdry B(0,\lambda_k) =2$. Writing $\{x^1_k,x^2_k\}:= C_{j_k}\cap  \rmbdry B(0,\lambda_k)$, the set $C_{j_k}^\prime:=(C_{j_k}\setminus  B(0,\lambda_k))\cup[0,x^1_k]\cup[0,x^2_k]$
is a competitor for $C_{j_k}$. From the almost minimality of $C_{j_k}$ we infer that 
$$ \calH^1(C_{j_k} \cap B(0,\lambda_k))\leq 2\lambda_k(1+\xi(\lambda_kr_{j_k}))\leq 2^{k+1}+ 2^{k+1}\xi(2^kr_{j_k})\,.$$
Setting $H_k:=G_k\cap [0,\lambda_k]$, we deduce from \ref{eilenberg} that $\calL^1(H_k)\leq 2^{k+1}\xi(2^kr_{j_k})$. 
Therefore $\calL^1(H)<1/3$ where $H:=\cup_k H_k$. 
Consequently, for each integer $k\ge 1$ we can find a radius $R_k\in(\lambda_k-1/3,\lambda_k)\setminus G$. We shall keep in mind that $R_k\in(2^{k-1},2^k)$ by construction. 

Let us now fix an arbitrary integer $k\geq 1$. The way we have selected the radius $R_k$ ensures that $\rmcard C_{j_h}\cap  \rmbdry B(0,R_k)=2$ for every  $h\geq k$.  We write 
$\{y^1_{k,h},y^2_{k,h}\}:= C_{j_h}\cap  \rmbdry B(0,R_k)$. Noticing that the set $C_{j_h}^{\prime\prime}:=(C_{j_h}\setminus  B(0,R_k))\cup[y^1_{k,h},y^2_{k,h}]$ is a competitor for $C_{j_h}$, the almost minimality of $C_{j_h}$ yields
$$\calH^1(C_{j_h} \cap B(0,R_k))\leq (1+\xi(R_kr_{j_h}))\|y^1_{k,h}-y^2_{k,h}\| \,.$$
On the other hand, $\lim_h\calH^1(C_{j_h} \cap B(0,R_k))=2R_k={\rm diam}\, B(0,R_k)$ by the density assumption.  By the inequality above, it then follows that 
$\lim_h\|y^1_{k,h}-y^2_{k,h}\|=2R_k$. Referring to (1), we now choose a sequence $\delta_h \downarrow 0$ such that for every $h\geq k$ there exist 
$z^1_{k,h},z^2_{k,h} \in B(0,R_k+\delta_h)$ with $\|y^i_{k,h}-z^i_{k,h}\|\leq \delta_h$, $i=1,2$. Taking a subsequence if necessary, $z^i_{k,h} \to z^i_k$ as $h \to \infty$,  $z^i_k \in F_{R_k}$, and $\|z_k^1-z_k^2\|=2R_k$. According to \ref{excess.length}, the rotundity of the norm implies that the line segment $[z_k^1,z_k^2]$ is a diameter of $B(0,R_k)$, i.e. $0\in [z_k^1,z_k^2]$. We claim that $ [z_k^1,z_k^2]\subset Z_{R_k}$. To prove the claim, we first notice that $C_{j_h}\cap B(0,R_k)$ contains a curve $\Gamma_h$ whose endpoints are $\{y^1_{k,h},y^2_{k,h}\}$, see \ref{local.topology} (E). By the Blashke Selection Principle, up to a further subsequence, $\Gamma_h\to K_*$ in the Hausdorff distance 
for some compact connected set $K_*\subset B(0,R_k)$ containing $z^1_k$ and $z_k^2$. In particular $\calH^1(K_*)\geq \|z_k^1-z_k^2\|=2R_k$. On the other hand, by lower semicontinuity of $\calH^1$ with respect to Hausdorff convergence, we have 
$$\calH^1(K_*)\leq \liminf_{h\to \infty}  \calH^1(\Gamma_h)\leq \lim_{h\to\infty} \calH^1(C_{j_h} \cap B(0,R_k)) =2R_k\,.$$
Hence $\calH^1(K_*)=2R_k$, and the rotundity of the norm yields $K_*=[z_k^1,z_k^2]$. In view of (1), we have thus proved the claim. It now follows from 
(2) and \cite[2.10.19(3)]{GMT} that $\nu\restr B(0,R_k) \geq \calH^1\restr  [z_k^1,z_k^2]$. Finally, the monotonicity stated in \ref{monotonicity} and the density assumption classicaly implies that $\nu(B(0,\lambda))=2\lambda$ for every $\lambda>0$. Therefore $\nu(B(0,R_k)) =\calH^1([z_k^1,z_k^2])$, whence $\nu\restr B(0,R_k) = \calH^1\restr  [z_k^1,z_k^2]$. From the arbitrariness of $k$ we conclude that $Z$ is a line through the origin and $\nu=\calH^1\restr Z$, which completes the proof of (B). 

\end{proof}

\par 
We leave the easy proof of (C) to the reader. It relies on the local convergence of $C_j$ to $Z$ in Hausdorff distance, as was already used in the proof of (B).
\par 
Conclusion (D) is our principal tool in this section. It is a consequence of \cite[Theorem 2.6]{PRE.87}  as we now explain.

\begin{proof}[Proof of {\rm (D)}] Let us recall that a tangent measure of $\mu$ at the point $x$ in the sense of \cite{PRE.87} is a weak* limit of some sequence $\{c_j(T_{x,r_j})_*\mu\}$, where $r_j \downarrow 0$ and $c_j>0$. Following the notations of \cite{PRE.87}, we write $\rmTan(\mu,x)$ the collection of all tangent measures of $\mu$ at $x$. The set $\rmTan(\mu,x)$ is endowed with the topology induced by the metric 
$${\mathbf D}(\nu_1,\nu_2):= \sum_{p\in\mathbb{N}}2^{-p}\min(1,F_p(\nu_1,\nu_2))\,,$$
where
$$F_p(\nu_1,\nu_2):=\sup\left\{\left|\int_{\R^n}f\,d\nu_1-\int_{\R^n}f\,d\nu_2\right| : \rmsupp f \subset B(0,p)\,,\;f\geq 0\,,\;{\rm Lip}(f)\leq 1\right\}$$
(recall that the weak* convergence of Radon measures coincides with the convergence with respect to  ${\mathbf D}$). 
Now we observe that \ref{Ahlfors} together with \ref{monotonicity} shows that for $r>0$ small enough,  
$$\kappa^{-1}r\leq \frac{\mu(B(x,r))}{r}\leq \kappa r \,,$$
for some constant $\kappa>0$. It  implies that $\rmTan(\mu,x)=\{c\nu : c>0\,,\; \nu \in \rmTan^{(1)}(\mu,x)\}$. In turn, we infer from (B) that 
 $\rmTan(\mu,x)=\{c \calH^1 \hel W : c>0\,,\; W\in \bG(X,1)\} $. 
According to \cite[Theorem 2.6]{PRE.87}, we can conclude that $\rmTan(\mu,x)$ is connected. It is now elementary to check 
that the map $\nu=c  \calH^1 \hel W \in \rmTan(\mu,x)\mapsto W\in \bG(X,1)$ is continuous, and (D) follows. 
%
\end{proof}

\end{Empty}

\begin{Theorem}[Differentiable regularity]
\label{87}
We recall our general assumption for this section that $X$ is a 2 dimensional rotund Banach space. Assume that
\begin{enumerate}
\item[(A)] $C \subset X$ is compact, connected, $U \subset X$ is open, $x_0 \in C \cap U$, $r_0 > 0$, $B(x_0,r_0) \subset U$;
\item[(B)] $\xi$ is a gauge and $\sqrt{\xi}$ is Dini;
\item[(C)] $C$ is $(\xi,r_0)$ almost minimizing in $U$;
\item[(D)] $\Theta^1(\calH^1 \hel C ,x_0)=1$.
\end{enumerate}
It follows that there exists $r > 0$ such that $C \cap B(x_0,r)$ is a {\em differentiable} curve.
\end{Theorem}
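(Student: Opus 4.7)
The plan is to reduce differentiability of the arclength parametrization at each point to the uniqueness of the tangent line at that point, and then to dispatch that uniqueness through a dichotomy based on whether a ``geometrically good'' tangent direction exists.

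First I would note that $\xi \leq \sqrt\xi$ near zero, so the hypothesis that $\sqrt\xi$ is Dini forces $\xi$ itself to be Dini. Together with~(D), this brings Theorem~\ref{lip.reg.bis} into play and yields $r_1 > 0$ such that $\Gamma := C \cap B(x_0, r_1)$ is a Lipschitz curve whose interior points all have density~$1$. Letting $\gamma : [a, b] \to X$ be an arclength parametrization of $\Gamma$, observation~\ref{86}(C) reduces the task to showing that the set of tangent lines
\begin{equation*}
\rmTan(C, x) := \{W \in \bG(X, 1) : \calH^1 \hel W \in \rmTan^{(1)}(\calH^1 \hel C, x)\}
\end{equation*}
is a singleton at every interior $x = \gamma(s)$. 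By \ref{86}(A) and (D), this set is always a non-empty connected subset of $\bG(X, 1) \cong S^1$.

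Next I would introduce the dichotomy. The definition of $\delta_X(v; \veps)$ is symmetric in $v \mapsto -v$, so $-G = G$, and the images $\tilde G := \pi(G)$ and $\tilde B := \pi(S_X \setminus G)$ under the double cover $\pi : S_X \to \bG(X, 1)$, $v \mapsto \rmspan\{v\}$, are well-defined; by Theorem~\ref{83}(A), $\tilde G$ is open and dense, while $\tilde B$ is closed with empty interior. If $\rmTan(C, x) \subset \tilde B$, then I would observe that any connected subset of a 1-manifold containing two distinct points contains an arc of positive length and hence has non-empty interior, so a connected subset of $\tilde B$ must be a singleton, and we are done. For the remaining case, pick $L_\infty = \rmspan\{v_\infty\} \in \rmTan(C, x) \cap \tilde G$ with $v_\infty \in G$. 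Theorem~\ref{83}(B) provides $\rho > 0$ and $C_0 > 0$ such that $\delta_X^{-1}(v; t) \leq C_0 \sqrt{t}$ whenever $\rmdist(v, \{\pm v_\infty\}) < \rho$ and $0 < t \leq 1$. I would then rerun the proof of Theorem~\ref{mainregth} verbatim, but with the global height bound~\ref{keylemma}(G) replaced by the directional version~\ref{85}; at the dyadic scales $r_j = 72^{-j}$ of that proof, with $L_{x, j}$ the secant line through the endpoints $x_{x, j}^\pm$ of the subcurve $\Gamma_{x, j}$ in direction $v_{x, j}$, the substitution yields, \emph{provided} $\rmdist(v_{x, j}, \{\pm v_\infty\}) < \rho$, an increment estimate
\begin{equation*}
d_{\bG(X, 1)}(L_{x, j+1}, L_{x, j}) \leq C' \sqrt{\xi(r_j)}
\end{equation*}
for some universal $C'$, by the same telescoping calculation as in the proof of~\ref{mainregth}.

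The main obstacle I foresee is maintaining the proviso $\rmdist(v_{x, j}, \{\pm v_\infty\}) < \rho$ at \emph{every} sufficiently large scale, not merely along the subsequence along which $L_\infty$ arises as a tangent. I would proceed by induction: since $\sqrt\xi$ is Dini, the test in~\ref{gauges} gives $\sum_j \sqrt{\xi(r_j)} < \infty$; picking $j_0$ with $d_{\bG(X, 1)}(L_{x, j_0}, L_\infty) < \rho / 4$ (possible because some subsequence converges to $L_\infty$) and $\sum_{j \geq j_0} C' \sqrt{\xi(r_j)} < \rho / 4$, the inductive step preserves $d_{\bG(X, 1)}(L_{x, j}, L_\infty) < \rho / 2$ for every $j \geq j_0$. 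The increment estimate then applies at every further scale and forces $\{L_{x, j}\}$ to be Cauchy with limit necessarily $L_\infty$, so $\rmTan(C, x) = \{L_\infty\}$. Combined with the first case, the tangent is unique at every interior point of $\Gamma$, so $\gamma$ is differentiable there, proving that $C \cap B(x_0, r)$ is a differentiable curve for $r$ slightly smaller than $r_1$.
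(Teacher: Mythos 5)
Your proposal is correct and follows essentially the same route as the paper: both reduce to uniqueness of the tangent line via \ref{86}(C), invoke the dichotomy on whether $\rmTan(C,x)$ meets $G$, dispose of the first branch by the connectedness of \ref{86}(D) together with the nowhere-density of $S_X\setminus G$, and in the second branch rerun the iteration of \ref{mainregth} with \ref{keylemma}(G) replaced by \ref{85} and \ref{83}(B), preserving the proviso $\|v_{r_j}-v_0\|<\eta$ by induction using $\sum_j\sqrt{\xi(r_j)}<\infty$. Your inductive bookkeeping of the proviso is slightly more explicit than the paper's (which abbreviates it, with a small typo writing $\sqrt{r_j}$ for $\sqrt{\xi(r_j)}$), but the argument is the same.
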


\begin{proof}
We explain how to modify the proof of \ref{mainregth}. Since $x_0$ is a regular point of $C$, recall \ref{def.reg}, we may assume (letting $r_0$ be smaller if necessary) each $x \in C \cap B(x_0,r_0)$ is regular as well, and $C \cap B(x,r_0)$ is a Lipschitz curve, according to \ref{lip.reg.bis}. In order to establish the differentiability of a corresponding arclength parametrization, it suffices to show $\rmTan^{(1)}(\mu,x_0)$ contains excatly one element, according to \ref{86}(C). We let $G$ be the set associated with the norm of $X$ in \ref{83}. Abbreviate $\rmTan(C,x_0) = S_X \cap \{ v : \calH^1 \hel \rmspan\{v\} \in \rmTan^{(1)}(\calH^1 \hel C , x_0)\}$. We will consider the following alternative: {\bf} Either $\rmTan(C,x_0) \cap  G = \emptyset$, {\bf or} $\rmTan(C,x_0) \cap G \neq \emptyset$. In the first case, the connectedness property stated in \ref{86}(D) and the fact that $S_X \setminus G$ has empty interior imply easily that $\rmTan(C,x_0)$ is a singleton and the proof is complete. In the second case we pick $\rmspan\{v_0\} = W_0 \in \bG(X,1)$ such that $v_0 \in G$, $\calH^1 \hel W_0$ is a tangent measure to $\calH^1 \hel C$ at $x_0$, and we choose $\eta > 0$ so that if $v \in S_X$ and $\|v-v_0\| < \eta$ then $v \in G$. The proof of \ref{mainregth} remains unchanged until the end of the proof of {\sc Claim \#2}. The application of \ref{keylemma} is replaced by an application of \ref{85} together with \ref{83}(B). To each $r > 0$ such that $(x_0,r)$ is a good pair we associate the unit vector $v_r$ generating the line that joins the endpoints of $\Gamma_{x_0,r}$. We may choose $r_1 > 0$ small so that $\| v_{r_1} - v_0 \| < \eta/2$. We proceed through the remaining part of the proof of \ref{mainregth} until near \eqref{eq.211}. In our new notations, this shows that $\|v_{r_{j+1}} - v_{r_j} \| \leq C \sqrt{\xi(r_j)}$. However for the computations to be valid, one must make sure at each stage of the iteration that the unit vectors $v_{r_j}$ still belong to $G$, in fact we ought to guarantee that $\|v_{r_j}-v_0\| < \eta$  in order that \ref{83}(B) applies. This of course can be enforced by choosing $r_1$ so small that $C \sum_{j=1}^\infty \sqrt{r_j} < \eta/2$.
\end{proof}


\bibliographystyle{amsplain}
\bibliography{thdp}

\providecommand{\bysame}{\leavevmode\hbox to3em{\hrulefill}\thinspace}
\providecommand{\MR}{\relax\ifhmode\unskip\space\fi MR }
\providecommand{\MRhref}[2]{%
  \href{http://www.ams.org/mathscinet-getitem?mr=#1}{#2}
}
\providecommand{\href}[2]{#2}
\begin{thebibliography}{10}

\bibitem{ALM.76}
F.J. Almgren, \emph{Existence and regularity almost everywhere of solutions to
  elliptic variational problems with constraints}, Memoirs of the AMS, no. 165,
  American Math. Soc., 1976.

\bibitem{AMB.KIR.00}
L.~Ambrosio and B.~Kirchheim, \emph{Currents in metric spaces}, Acta Math.
  \textbf{185} (2000), no.~1, 1--80.

\bibitem{AMBROSIO.TILLI}
Luigi Ambrosio and Paolo Tilli, \emph{Topics on analysis in metric spaces},
  Oxford Lecture Series in Mathematics and its Applications, vol.~25, Oxford
  University Press, Oxford, 2004. \MR{2039660 (2004k:28001)}

\bibitem{COHN}
D.L. Cohn, \emph{Measure theory}, Birkh\"auser, 1980.

\bibitem{d3}
Guy David, \emph{H\"older regularity of two-dimensional almost-minimal sets in
  {$\Bbb R^n$}}, Ann. Fac. Sci. Toulouse Math. (6) \textbf{18} (2009), no.~1,
  65--246. \MR{MR2518104}

\bibitem{DEP.01}
Th. {De Pauw}, \emph{Nearly flat almost monotone measures are big pieces of
  {L}ipschitz graphs}, J. of Geom. Anal. \textbf{12} (2002), no.~1, 29--61.

\bibitem{DEVILLE.GODEFROY.ZIZLER}
Robert Deville, Gilles Godefroy, and V{\'a}clav Zizler, \emph{Smoothness and
  renormings in {B}anach spaces}, Pitman Monographs and Surveys in Pure and
  Applied Mathematics, vol.~64, Longman Scientific \& Technical, Harlow, 1993.
  \MR{1211634 (94d:46012)}

\bibitem{GMT}
Herbert Federer, \emph{Geometric {M}easure {T}heory}, Die grundlehren der
  mathematischen wissenschaften, vol. 153, Springer-Verlag, New York, 1969.

\bibitem{GRO.83}
Mikhael Gromov, \emph{Filling {R}iemannian manifolds}, J. Differential Geom.
  \textbf{18} (1983), no.~1, 1--147. \MR{697984 (85h:53029)}

\bibitem{KIR.94}
Bernd Kirchheim, \emph{Rectifiable metric spaces: local structure and
  regularity of the {H}ausdorff measure}, Proc. Amer. Math. Soc. \textbf{121}
  (1994), no.~1, 113--123.

\bibitem{LINDENSTRAUSS.TZAFRIRI.II}
Joram Lindenstrauss and Lior Tzafriri, \emph{Classical {B}anach spaces. {II.
  Function spaces}}, Ergebnisse der Mathematik und ihrer Grenzgebiete [Results
  in Mathematics and Related Areas], vol.~97, Springer-Verlag, Berlin, 1979,
  Function spaces. \MR{540367 (81c:46001)}

\bibitem{MAR.85}
Gaven~J. Martin, \emph{Quasiconformal and bi-{L}ipschitz homeomorphisms,
  uniform domains and the quasihyperbolic metric}, Trans. Amer. Math. Soc.
  \textbf{292} (1985), no.~1, 169--191. \MR{805959 (87a:30037)}

\bibitem{MEI.09}
Thomas Meinguet, \emph{{$(\bold M,cr^\gamma,\delta)$}-minimizing curve
  regularity}, Bull. Belg. Math. Soc. Simon Stevin \textbf{16} (2009), no.~4,
  577--591. \MR{2583547 (2011a:49093)}

\bibitem{MOR.94}
F.~Morgan, \emph{$(\mathbf{M},\veps,\delta)$-minimal curve regularity}, Proc.
  Amer. Math. Soc. \textbf{120} (1994), 677--686.

\bibitem{PRE.87}
D.~Preiss, \emph{Geometry of measures in $\mathbf{R}^n$: Distribution,
  rectifiability, and densities}, Ann. of Math. (2) \textbf{125} (1987),
  537--643.

\bibitem{VAI.05}
Jussi V{\"a}is{\"a}l{\"a}, \emph{Quasihyperbolic geodesics in convex domains},
  Results Math. \textbf{48} (2005), no.~1-2, 184--195. \MR{2181248
  (2006h:30035)}

\bibitem{VAI.07}
\bysame, \emph{Quasihyperbolic geometry of domains in {H}ilbert spaces}, Ann.
  Acad. Sci. Fenn. Math. \textbf{32} (2007), no.~2, 559--578. \MR{2337495
  (2008d:30041)}

\end{thebibliography}


\end{document}